%% file: main.tex
\documentclass[12pt]{amsart}
\usepackage[T1]{fontenc}
\usepackage{amsmath,amssymb,amsthm}
\usepackage{mathtools}
\usepackage{tikz-cd}
\usepackage{cleveref}
\usepackage{fullpage}
\usepackage{algorithm}
\usepackage{algpseudocode}
\usepackage{comment}
\usepackage{booktabs}
\usepackage{breqn}
\usepackage{listings}
\usepackage{xcolor}

\usepackage[backend=biber,style=alphabetic]{biblatex}
\lstdefinelanguage{Macaulay2}{
  keywords={ZZ, QQ, ideal, diff, transpose, sub, rank, matrix, map, ker, image, sum, apply, intersect, saturate, degrees, trim, gens, positions, degree, return, and, first, ring, coefficientRing, inducedMap, loadPackage, generators, numColumns, submatrix, target, for, in, list, toSequence, endl, mingens, source, select, toList, print, scan},
  morecomment=[l]{--}, 
  morestring=[b]",     
  sensitive=true       
}

\lstdefinestyle{terminal}{
    basicstyle=\scriptsize\ttfamily,
    keywords={},        
    breaklines=true,
    frame=single,
    numbers=none
}

\newcommand{\C}{\mathbb{C}}

\newcommand{\N}{\mathbb{N}}
\newcommand{\Z}{\mathbb{Z}}

\newcommand{\PP}{\mathbb{P}}
\renewcommand{\O}{\mathcal{O}}
\newcommand{\K}{\mathcal{K}}

\newcommand{\E}{\mathcal{E}}
\newcommand{\st}{\ : \ }
\newcommand{\sym}{\operatorname{Sym}}

\newcommand{\sing}{\operatorname{Sing}}
\newcommand{\pic}{\operatorname{Pic}}
\newcommand{\bs}{\operatorname{Bs}}

\newcommand{\im}{\operatorname{Image}}
\newcommand{\coker}{\operatorname{coker}}
\newcommand{\Gr}{\operatorname{Gr}^\bullet}
\newcommand{\Proj}{\operatorname{Proj}}
\newcommand{\GGL}{Green-Griffiths-Lang}

\theoremstyle{plain}
\newtheorem{thm}{Theorem}[section]
\newtheorem{lemma}[thm]{Lemma}
\newtheorem{prop}[thm]{Proposition}
\newtheorem{cor}[thm]{Corollary}

\theoremstyle{definition}
\newtheorem{defn}[thm]{Definition}
\newtheorem{example}{Example}

\newtheorem*{ggl}{Green-Griffiths-Lang Conjecture}

\title{Computing Jet Differentials and the Green-Griffiths-Lang Conjecture for Complements of Smooth Plane Curves}
\author{Ryan Contreras, Joseph Cummings, Eric Riedl, and Jaziel Torres}
\date{}

\begin{document}
\begin{abstract}
    We study the {\GGL} Conjecture for complements of smooth plane curves.
    We develop an effective method for computing a family of negatively twisted invariant logarithmic 2-jet differentials. 
    By realizing the first logarithmic jet space as a hypersurface in $\PP^2 \times \PP^2$, we encode these jet differentials in a finitely generated bigraded module that can be computed explicitly.
    We use this description to give a computational criterion for the {\GGL} Conjecture and verify it for several families of smooth plane curves.
    In examples with sufficiently many independent jet differentials, we determine the exceptional locus explicitly.
\end{abstract}
\maketitle

\section{Introduction}

The {\GGL} Conjecture is a major conjecture governing holomorphic maps from $\C$ to varieties of general type. We state its logarithmic form, which is the focus of this paper.
\begin{ggl}
    Let $X$ be a smooth projective variety over $\C$ and $D$ a simple normal crossing divisor such that $K_X + D$ is big.
    Then there is a proper subvariety $Z\subset X$ containing all entire curves of $X\setminus D$.
    That is, for any nonconstant holomorphic map $\gamma:\C\to X\setminus D$, $\gamma(\C)\subseteq Z$.
\end{ggl}

Because it predicts fundamental properties of an important class of varieties, the {\GGL} Conjecture has been the subject of intense study over the years.
For instance, just for the case $X = \PP^2$ we have the works \cite{green77, babets, dethloffSchumacherWong, el2003logarithmic, HouHuynhMerkerXie2026} studying {\GGL} and the works \cite{chen2004algebraichyperbolicity, PacienzaRousseau, CorvajaZannier2013, chenRiedlYeong} studying an (easier) algebraic analog of the {\GGL} Conjecture.
Despite all of this important work, however, the conjecture remains open for complements of plane curves. In this paper, we focus on this case, with $X = \PP^2$ and $D = C$ a smooth plane curve.

Over the past several decades, tools have been developed to restrict entire curves in $X \setminus D$. 
Extending some ideas of Green and Griffiths \cite{green1980two}, Demailly \cite{demailly1997algebraic} introduced varieties $\pi: X_k \to X$ for each $k$ that roughly speaking track points of $X$ together with possible $k$th order derivatives of a curve on $X$. 
Each $X_k$ comes with a line bundle $\O_{X_k}(\xi_k)$ satisfying the following basic property: for $a > 0$, any section of $\O_{X_k}(m\xi_k-aH)$ must vanish on the lift of any entire curve from $X \setminus D$.  
Thus, these line bundles provide ways to restrict entire curves on $X \setminus D$.
The direct image sheaf $\pi_*\O_{X_k}(m\xi_k)$ is the bundle $E_{k,m}\Omega_X(\log D)$ of invariant jet differential operators of order $k$ and total degree $m$.


Our computational focus is the family of line bundles
\[
    \O_{X_2}(\xi_2 + b\xi_1 - aH), \qquad a,b > 0,
\]
whose coefficient of the second tautological class $\xi_2$ is equal to one.
Their sections are particularly suitable for explicit calculation and for studying the geometry of their zero loci. 
Since the remaining part of their divisor class is pulled back from $X_1$, the first step is to determine the effective and nef cones of $X_1$, characterizing when a line bundle on $\O_{X_1}(r\xi_1 + tH)$ has a nonzero section.

Using results of El Goul \cite{el2003logarithmic}, we then show that the {\GGL} Conjecture for the complement of a given smooth plane curve can, in many cases, be verified using only the spaces $H^0(X_2, \O_{X_2}(\xi_2 + b\xi_1 - aH))$.
This yields the following criterion.

\begin{thm}\label{test theorem}
    Let $C \subset \PP^2$ be a smooth curve of degree $d$.
    Then $\PP^2 \setminus C$ satisfies {\GGL} if any of the following are satisfied: \begin{enumerate}
        \item[(I)] $d\geq 15$ and $H^0(X_2,\O_{X_2}(\xi_2 + b\xi_1 - aH)) = 0$ for all $a, b > 0$.
        \item[(II)] $d\geq 11$, $a, b > 0$, $H^0(X_2,\O_{X_2}(\xi_2 + b\xi_1 - aH))$ has at least one section whose vanishing locus is irreducible, and 
        \[
            \frac{a}{b + 1} < \frac{4d^2-51d+90}{12(d-3)}.
        \]
        \item[(III)] $d \geq 4$ and there are two sections $\sigma_1 \in H^0(X_2,\O_{X_2}(\xi_2+b_1\xi_1 - a_1H))$ and $\sigma_2 \in H^0(X_2,\O_{X_2}(\xi_2+b_2\xi_1-a_2H))$ with $a_1,a_2,b_1,b_2 > 0$, and with $\sigma_1$ and $\sigma_2$ not both vanishing along the same divisor in $X_2$.
    \end{enumerate}
\end{thm}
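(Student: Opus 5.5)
We will derive all three criteria from the fundamental vanishing theorem. For $a>0$, every section of $\O_{X_2}(m\xi_2+\cdots-aH)$ vanishes on the lift $\gamma_{[2]}(\C)$ of any nonconstant entire curve $\gamma:\C\to\PP^2\setminus C$. We combine this with El Goul's analysis of the logarithmic Demailly tower over $(\PP^2,C)$. That analysis describes the structure of $X_2\to X_1\to\PP^2$, the vertical divisor $\Gamma_2=\PP(T_{X_1/X})\subset X_2$, and intersection numbers on $X_2$. It also supplies a section of $\O_{X_2}(m(\xi_2+b_0\xi_1)-a_0H)$ for some $a_0,b_0>0$ when $d\ge 15$. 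The strategy throughout is to show that the lift of any entire curve lies in a subvariety of $X_2$ whose image in $\PP^2$ is a proper subvariety. We do this either by cutting down to a curve in $X_2$, or by using the nef and effective cones of $X_1$ to force degeneracy.

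For (III), let $\gamma$ be an entire curve. Both $\sigma_1$ and $\sigma_2$ vanish on $\gamma_{[2]}(\C)$, so the lift lies in $Z(\sigma_1)\cap Z(\sigma_2)$. Since the two sections share no common divisor, this intersection has codimension $2$ in the threefold $X_2$, so it is a curve $W$. Its image $\pi(W)\subset\PP^2$ is then a proper subvariety, namely a finite union of curves and points. This gives $Z=\pi(W)$. The one subtlety is that $\gamma_{[2]}$ may fail to be defined where $\gamma'=0$, but it extends across those points by taking limits. It is also contained in $\Gamma_2$ only if $\gamma$ is constant. So the conclusion holds for every nonconstant $\gamma$, with no restriction on $d\ge 4$ beyond ensuring $K+C$ is big.

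For (II), let $\sigma$ be the section with irreducible zero divisor $Y=Z(\sigma)\subset X_2$, and suppose an entire curve $\gamma$ has Zariski-dense image in $\PP^2$. Then $\gamma_{[2]}(\C)\subset Y$. El Goul's section of $\O_{X_2}(m(\xi_2+b_0\xi_1)-a_0H)$ also vanishes on the lift. If it does not vanish identically on $Y$, we are in the situation of (III) with $W$ a curve, and we are done. Otherwise $Y$ is a component of its zero locus. We then restrict the jet line bundle to $Y$ and want a second section on $Y$ with negative $H$-twist that is not identically zero. The natural approach is a Riemann--Roch/Morse-inequality computation on $Y$. We would compute $(\xi_2+\beta\xi_1-\alpha H)^2\cdot[Y]$ with $[Y]=\xi_2+b\xi_1-aH$ using El Goul's intersection table, and choose $\beta$ and $\alpha$ so that the leading term is positive. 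The numerical condition $\frac{a}{b+1}<\frac{4d^2-51d+90}{12(d-3)}$ should be exactly positivity of this self-intersection once we optimize over the twist parameters. The threshold $d\ge 11$ should make the right-hand side positive and keep the relevant class on $X_1$ inside the nef/big cone determined earlier. This step is the main obstacle: we must get the intersection-number bookkeeping right, including whether $Y$ contains $\Gamma_2$ and how $\xi_2$ restricts. We would first try holomorphic Morse inequalities in the form $h^0\ge \frac{m^2}{2}(P^2-2PN)\cdot Y$ with $P,N$ nef, as El Goul does. Irreducibility is used here so that the resulting section is nonzero on all of $Y$, and its vanishing therefore cuts the lift down to a curve.

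For (I), suppose all $H^0(\O_{X_2}(\xi_2+b\xi_1-aH))$ vanish. We use El Goul's nonzero section $s$ of $\O_{X_2}(m(\xi_2+b_0\xi_1)-a_0H)$ for $d\ge 15$. Pushing forward to $X_1$, we filter $s$ by degree in the fiber coordinate of $X_2\to X_1$, equivalently by the order of vanishing along $\Gamma_2$. The leading nonzero piece factors through sections of the form $\xi_2+b\xi_1-aH$ times classes pulled back from $X_1$. The hypothesis forces $s$ to be divisible by the equation of $\Gamma_2$ until it becomes a section pulled back from $\O_{X_1}(r\xi_1-a'H)$, with $a'>0$ preserved by the nef-cone computation on $X_1$. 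The problem thereby reduces to order-one jet differentials. A negatively twisted section on $X_1$ restricts the lift $\gamma_{[1]}$ to a surface $S\subset X_1$. There we either repeat the cutting argument using the (III)-type intersection, or invoke El Goul's order-one degeneracy result, giving the proper subvariety $Z$.
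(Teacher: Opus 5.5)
Your proposal has a genuine gap, and it starts with a dimension count. $X_1$ is a $\PP^1$-bundle over $\PP^2$ and $X_2$ is a $\PP^1$-bundle over $X_1$, so $X_2$ is a fourfold, not a threefold. In (III) the common zero locus of $\sigma_1,\sigma_2$ is therefore a surface $W\subset X_2$, not a curve, and its image in $\PP^2$ can be all of $\PP^2$. In the examples of Section 6.3, the parallel locus has components, such as $x_1a_0-x_0a_1=0$, that dominate $\PP^2$. So the vanishing theorem alone does not produce a proper subvariety of $\PP^2$.

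The missing ingredient is \Cref{thm:McQuillan}. When $W$ dominates, its image $Y\subset X_1$ is a hypersurface, and $T_Y\cap V_1$ induces a foliation on a desingularization $\widetilde{Y}$ to which lifts of entire curves are tangent. El Goul's logarithmic version of McQuillan's theorem then gives algebraic degeneracy. Jouanolou's theorem shows that either there are finitely many invariant curves, or the foliation is a fibration whose general fiber $F$ satisfies $(K_{\widetilde{Y}}+\widetilde{D})\cdot F>0$ and is hence hyperbolic off $\widetilde{D}$.

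The same slip breaks (II). Your $Y$ is a threefold, so the relevant quantity is a cube, namely $(\xi_2+2\xi_1)^3\cdot[Z]=(b+c)(4d^2-51d+90)-12(d-3)a$. Positivity of this number does not by itself give bigness of $\xi_2|_Z$. The paper pairs it with $h^2(Z,L|_Z)=0$ for $L=\O_{X_2}(n\xi_2+(2+\epsilon)n\xi_1)$. That vanishing comes from relative ampleness plus Leray for $h^3(X_2,L(-Z))$, and from a Bogomolov-type vanishing, using semistability of $T_{\PP^2}(-\log C)$, for $h^2(X_2,L)$. Even then, the new sections on $Z$ only cut the lift down to a surface, so you must again invoke \Cref{thm:McQuillan}. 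The reduction ``to (III) with $W$ a curve'' is not available.

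Your argument for (I) fails for a different reason. The hypothesis only concerns sections whose $\xi_2$-coefficient is $1$. It says nothing about sections of $\O(c\xi_2+b\xi_1-aH)$ with $c\ge 2$, which need not be products of $c=1$ sections. So nothing forces $s$ to be divisible by the equation of $\Gamma_2$ until it is pulled back from $X_1$. Moreover, that endpoint cannot exist: by \Cref{thm:eff_cone}, $H^0(X_1,\O(r\xi_1-a'H))=0$ for $a'>0$. Your reduction would therefore contradict the existence of El Goul's section, i.e.\ it would show that the hypothesis of (I) is never satisfied, whereas Section 6.1 exhibits curves satisfying it.

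What you are missing is to work with the irreducible component $Z\neq\Gamma_2$ given by \Cref{prop:ZcupGamma}, with $[Z]=c\xi_2+b\xi_1-aH$ and $a>0$:
\begin{itemize}
\item By \Cref{lemma:Demailly3.3}(d), $b\ge 2c\ge 0$.
\item $c\neq 0$, since $b\xi_1-aH$ is not effective.
\item $c=1$ is excluded by the hypothesis, since such a $Z$ would be the divisor of a nonzero section of $\O(\xi_2+b\xi_1-aH)$ with $a,b>0$.
\end{itemize}
Hence $c\ge 2$, $b\ge 4$, and $m=b+c\ge 6$. The discriminant map $\Delta$ of \Cref{prop:m>=6} then produces a nonzero section of $\O_{X_1}(t\xi_1+rH)$. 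The inequality $t\le r$ from the effective cone of $X_1$ yields $a/m\le d/6-1<\frac{4d^2-51d+90}{12(d-3)}$ for $d\ge 15$. This puts you in the setting of \Cref{prop:restriction_is_big}, after which \Cref{thm:McQuillan} is again needed to finish.
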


Furthermore, we are able to embed $X_1 = \PP T_{\PP^2}(-\log C)$ into $\PP^2 \times \PP^2$ and use this explicit representation of $X_1$ to effectively compute $H^0(X_2,\O_{X_2}(\xi_2 + b\xi_1 - aH))$ for all $a,b > 0$.


\begin{thm}\label{thm:introAlgorithm}
    Given a smooth plane curve $C$, the spaces
    \[
        H^0(X_2,\O_{X_2}(\xi_2+b\xi_1-aH)), \qquad a,b>0,
    \]
    can be computed effectively from a finitely generated bigraded module.
\end{thm}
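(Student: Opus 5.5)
The plan is to reduce the computation to linear algebra on line bundles over $X_1 \subset \PP^2 \times \PP^2$, and to assemble the results into a bigraded module over the Cox ring $S = \C[x_0,x_1,x_2,y_0,y_1,y_2]$.

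\emph{Step 1: push down to $X_1$.} By Demailly's construction, $X_2 = \PP(V_1)$, where $V_1 \subset T_{X_1}$ is the rank-two subbundle of directions projecting into the tautological line, and $\O_{X_2}(\xi_2)$ is the tautological $\O(1)$. Hence
\[
\pi_{2,1*}\O_{X_2}(\xi_2 + b\xi_1 - aH) = V_1^* \otimes \O_{X_1}(b\xi_1 - aH),
\]
and the space to be computed is $H^0(X_1, V_1^*(b\xi_1 - aH))$. The defining sequence $0 \to T_{X_1/\PP^2} \to V_1 \to \O_{X_1}(-\xi_1) \to 0$ dualizes to
\[
0 \to \O_{X_1}(\xi_1) \to V_1^* \to \Omega_{X_1/\PP^2} \to 0,
\]
where $\Omega_{X_1/\PP^2} \cong \O_{X_1}(-2\xi_1) \otimes \pi^*\det \Omega_{\PP^2}(\log C)$. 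Since $\det\Omega_{\PP^2}(\log C) = \O(d-3)$, every term is an explicit twist $\O_{X_1}(r\xi_1 + tH)$. The effective cone of $X_1$ computed earlier then shows that only finitely many residues of $(a,b)$, and a bounded wedge of $(a,b)$ values, can contribute.

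\emph{Step 2: explicit model in $\PP^2 \times \PP^2$.} Using the embedding $X_1 = \PP T_{\PP^2}(-\log C) \hookrightarrow \PP^2 \times \PP^2$ as a hypersurface $\{G = 0\}$, we identify $\O(\xi_1)$ and $H$ with restrictions of $\O(0,1)$ and $\O(1,0)$, up to a fixed correction twist. Rather than using the extension above abstractly, we write $V_1^*$ concretely: it is the image, or equivalently the kernel of the natural map, of $\Omega_{\PP^2\times\PP^2}|_{X_1}$ modulo the conormal direction $dG$ and the relative Euler data. Through the Euler sequences on each factor this becomes a complex of direct sums of line bundles $\O_{X_1}(s,t)$. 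Thus $V_1^*(b\xi_1 - aH)$ is the kernel of an explicit map $\phi\colon \bigoplus \O_{X_1}(s_i,t_i) \to \bigoplus \O_{X_1}(s_j',t_j')$ whose entries are partial derivatives of $G$ and of the equation of $C$. Taking sections is left exact, so
\[
H^0\bigl(V_1^*(b\xi_1 - aH)\bigr) = \ker\bigl(H^0(\phi)\bigr)
\]
in each bidegree.

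\emph{Step 3: the module.} For line bundles on the hypersurface, the sequence $0 \to \O(s-\deg_1 G,\, t-\deg_2 G) \to \O(s,t) \to \O_{X_1}(s,t) \to 0$ together with Künneth shows that $\bigoplus_{s,t} H^0(X_1, \O_{X_1}(s,t))$ equals $S/(G)$ in the region where the relevant $H^1$ of $\PP^2 \times \PP^2$ vanishes. Outside that region the extra classes come from $H^2(\O_{\PP^2}(s')) \otimes H^0(\O_{\PP^2}(t'))$ and are again described by a finitely generated $S$-module, namely a local cohomology module with respect to the irrelevant ideal $(x)\cap(y)$. Let $M$ be the kernel of the induced map of bigraded modules; it is the bigraded saturation, or ideal transform, of $\coker(\phi^\vee)^\vee$. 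As a kernel of a map between finitely generated modules over the Noetherian ring $S$, $M$ is finitely generated, and $H^0(X_2, \O(\xi_2 + b\xi_1 - aH)) = M_{(-a+c_1,\, b+c_2)}$ for fixed shifts $c_1,c_2$. Macaulay2 then computes $M$ by means of the saturation (or $\mathrm{Hom}(B^{[k]},-)$ for large $k$) and reads off the graded pieces.

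\emph{Main obstacle.} The hard part is the negative $H$-twist. Because $a > 0$, the relevant bidegrees lie where naive sections of $\O(s,t)$ on $\PP^2 \times \PP^2$ vanish. All sections therefore arise through cohomological connecting maps, and $S/(G)$ alone does not see them; one cannot simply truncate. I would handle this in two ways. First, I would bound $a$ in terms of $b$ using the effective-cone result, so that only a wedge of degrees matters. Second, I would verify that, within the wedge, the kernel of global sections agrees with the degree pieces of the saturated module, i.e.\ that $H^0_B$ and $H^1_B$ of the presentation vanish there. Checking this local-cohomology compatibility is where I expect the real work to lie.
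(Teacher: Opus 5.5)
Your skeleton matches the paper's. You push $\O_{X_2}(\xi_2+b\xi_1-aH)$ down to $V_1^\vee(b\xi_1-aH)$ on $X_1$, use $X_1\subset\PP^2\times\PP^2$, present the bundle through sums of $\O_{X_1}(s,t)$, and read sections off a bigraded $S$-module. There are two genuine gaps.

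\textbf{Step 2 is not carried out, and the sketch would not give $V_1^\vee$.} Quotienting $\Omega_{\PP^2\times\PP^2}|_{X_1}$ by $dG$ and the Euler data yields $\Omega_{X_1}$. The natural map $\Omega_{X_1}\to V_1^\vee$ is not surjective at points of $\pi_1^*C$. Locally, take $C=\{z_1=0\}$ and frame $V_1$ by $\partial_t$ and $z_1\partial_{z_1}+t\partial_{z_2}$. Then $dz_1,dz_2,dt$ restrict to multiples of the dual frame that miss the second dual vector along $z_1=t=0$. $V_1^\vee$ is a quotient of $\Omega_{X_1}(\log\pi_1^*C)$, and you never say where the logarithmic structure enters. (Also, ``the image, or equivalently the kernel'' does not define anything.)

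The missing idea is \Cref{TX1log as maps of line bundles}:
\begin{itemize}
\item One has $T_{X_1}(-\log\pi_1^*C)=T_{\PP^2\times\PP^2}(-\log\widetilde C)|_{X_1}\cap T_{X_1}$, hence $T_{X_1}(-\log\pi_1^*C)\cong\ker(\nabla f\oplus\nabla g)/\O$.
\item Restricting the first block to the tautological line $\O(1,-1)\xrightarrow{[a_0\ a_1\ a_2]^T}\O(1,0)^3$ gives $V_1\cong\K'/\O e$, where $\K'=\ker\bigl[\sum_i a_i\,\partial g/\partial x_i,\ \partial f/\partial x_0,\ \partial f/\partial x_1,\ \partial f/\partial x_2\bigr]$.
\item Since $V_1^\vee\cong V_1\otimes\det V_1^{-1}$ with $\det V_1=\O(2-d,1)$, your target becomes $H^0(V_1(d-2-a-b,\,b-1))$.
\end{itemize}
Note that $b\xi_1-aH$ has bidegree $(-a-b,b)$, so the index is not of the form $(-a+c_1,b+c_2)$ with fixed shifts.

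\textbf{Step 3 rests on a false cohomological claim, so the obstacle you defer is misdiagnosed.} By K\"unneth, $H^1(\PP^2\times\PP^2,\O(u,v))=0$ for \emph{every} $(u,v)$. Hence $H^0(X_1,\O(s,t))=S_{(s,t)}$ in all bidegrees. The classes from $H^2(\O_{\PP^2}(\cdot))\otimes H^0(\O_{\PP^2}(\cdot))$ feed $H^1(X_1,\O(s,t))$, not $H^0$, and such local cohomology modules are not finitely generated in general. So with a genuine kernel presentation, the module kernel would already be correct in every degree, and no saturation is needed.

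The real subtlety appears with the natural presentation $0\to\O\to\K'\to V_1\to0$, which is a cokernel. There one must show that the connecting map $H^0(V_1(p,q))\to H^1(X_1,\O(p,q))$ vanishes. The paper does this in \Cref{lemma: injectivity of H1 map}:
\begin{itemize}
\item Multiplication by $a_i$ is injective on $H^1(X_1,\O(p,q))$ for $q\ge-1$. This follows by embedding $H^1(X_1,\O(p,q))$ into $H^2(\PP^2\times\PP^2,\O(p-d+1,q-1))$, where the kernel of $a_i$ is a quotient of $H^1(\PP^2\times\PP^1,\O(p-d+1,q))=0$.
\item Since $e=(0,a_0,a_1,a_2)^T$ factors as $\O\to\K'\to\O(1,-1)\oplus\O(0,1)^3$, this kills the connecting map and gives \Cref{V1 as maps of modules}.
\end{itemize}
This is not vacuous: for $q=-2$ it fails, and sections of $V_1(p,-2)$ coming from $T_{X_1/\PP^2}$ are missed by $K'/eS$.

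Your dual route could instead be finished without that lemma. Since $\Gamma_*(\O_{X_1})=S$ in all bidegrees, $\bigoplus_{(u,v)}H^0(V_1^\vee(u,v))=\operatorname{Hom}_S(K'/eS,S)$. This still requires the presentation from \Cref{TX1log as maps of line bundles}. As written, your proposal leaves both the presentation and the step you call ``the real work'' undone.
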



Theorem 1.2 is made precise in \Cref{V1 as maps of modules} together with \Cref{lemma:sections of V1}.
Using the algorithm from Theorem \ref{thm:introAlgorithm}, we are able to apply Theorem \ref{test theorem} to many different examples, exhibiting a wide range of behavior with respect to our test. 
For instance, we have the following.

\begin{cor}
    The following curves have no sections of $H^0(X_2,\O_{X_2}(\xi_2 + b\xi_1 - aH))$ for all $a,b > 0$. 
    Hence their complements satisfy {\GGL} by \Cref{test theorem}(I): 
    \begin{itemize}
        \item $x_0^{15} + x_1^{15} + x_2^{15} + x_0^5 x_1^5 x_2^5$
        \item $x_0^{15} + x_1^{15} + x_2^{15} + x_0x_1x_2(x_0 + x_1 + x_2)^{12}$
        \item $x_0^{17} + 2x_1^{17} + 3x_2^{17} + 13x_0^6x_1^5x_2^6$
        \item $x_0^{14}x_1 + x_0x_2^{14} + x_1^{14}x_2 + x_0^5 x_1^5 x_2^5$
    \end{itemize}
    For each of the following curves, there exists $a,b > 0 $ satisfying $\frac{a}{b + 1} < \frac{4d^2-51d+90}{12(d-3)}$ such that $H^0(X_2,\O_{X_2}(\xi_2 + b\xi_1 - aH))$ admits a section with irreducible vanishing locus.
    Hence, their complements satisfy {\GGL} by \Cref{test theorem}(II): 
    \begin{itemize}
        \item $x_0^{13}+x_0^6x_1^7+x_1^{13}+x_1x_2^{12}$
        \item $x_0^{13}+x_0^6x_1^7+x_1^{13}+x_1x_2^{12}+x_2^{13}$
        \item $x_0^{18} + x_1^{18} + x_2^{18} + x_0^9x_1^9 + x_0^{12}x_1^4x_2^2$
    \end{itemize}
\end{cor}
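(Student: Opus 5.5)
The corollary follows by combining \Cref{thm:introAlgorithm} with \Cref{test theorem}. For each listed curve the plan is to (a) check that $C$ is smooth and satisfies the degree hypothesis, (b) compute the relevant spaces $H^0(X_2,\O_{X_2}(\xi_2+b\xi_1-aH))$ using the finitely generated bigraded module of \Cref{V1 as maps of modules} and \Cref{lemma:sections of V1}, and (c) verify either hypothesis (I) or hypothesis (II).

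\textbf{Smoothness and degree.} Smoothness is a Gr\"obner basis check: the ideal generated by $F$ and its partial derivatives must be irrelevant, i.e.\ its saturation is the unit ideal. The degrees are $15,15,17,15$ for the first group, which meets $d\ge 15$, and $13,13,18$ for the second group, which meets $d\ge 11$.

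\textbf{Embedding and the module.} Realize $X_1=\PP T_{\PP^2}(-\log C)$ as a bidegree hypersurface in $\PP^2\times\PP^2$. Write down the bigraded module $M$ whose $(b,-a)$-graded pieces compute the spaces of sections. Since $M$ is finitely generated, computing its minimal generators and their bidegrees decides all $(a,b)$ with $a,b>0$ at once, not just finitely many.

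\textbf{Case (I), first group.} Show that no generator, and hence no element, of $M$ lies in a bidegree with $a,b>0$. The expected mechanism is a finite computation together with the effective/nef cone description of $X_1$. That description should bound the ratio $a/(b+1)$ for any nonzero section, which reduces the verification to bounded degrees, where a direct linear algebra computation suffices.

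\textbf{Case (II), second group.} Exhibit explicit $a,b$ together with a section $\sigma$ of $M$ in that bidegree. Then check three things:
\begin{itemize}
\item the inequality $\frac{a}{b+1}<\frac{4d^2-51d+90}{12(d-3)}$, which is pure arithmetic (for $d=13$ the right side is $\frac{73}{120}\cdot\ldots$, computed directly);
\item that the vanishing locus of $\sigma$ in $X_2$ is irreducible;
\item a conclusion via \Cref{test theorem}(II).
\end{itemize}

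\textbf{Main obstacle.} I expect the irreducibility check to be the hardest step. The zero locus lives in $X_2$, which is a $\PP^1$-bundle over $X_1$. Because $\sigma$ has coefficient one on $\xi_2$, it is linear in the fiber coordinate of $X_2\to X_1$, so it can be written as $\alpha\, u+\beta\, v$ with $\alpha,\beta$ sections on $X_1$. Its zero locus is then irreducible exactly when $\alpha$ and $\beta$ have no common divisorial factor on $X_1$, and the zero locus does not contain a whole fiber component.

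This reduces irreducibility to checking, on the hypersurface $X_1\subset\PP^2\times\PP^2$, that the ideal $(\alpha,\beta)$ has codimension two. Equivalently, $\gcd(\alpha,\beta)$ must be trivial modulo the defining equation, which can be certified by a primary decomposition or dimension computation in Macaulay2.
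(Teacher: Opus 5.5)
Your plan is the paper's in outline: the corollary is a Macaulay2 computation of $M_V$ for each curve, followed by \Cref{Module test}. Here $(M_V)_{(p,q)}$ corresponds to $(a,b)=(d-3-p-q,\,q+1)$ via \Cref{lemma:sections of V1}.

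For the first group, the paper checks that $M_V$ has no minimal generator of \emph{total} degree $p+q\le d-4$. Since $S$ lives in nonnegative bidegrees, every element of $(M_V)_{(p,q)}$ is an $S$-combination of generators of total degree $\le p+q$; this is what makes your ``hence no element'' valid. If you only look at generators with $a,b>0$, i.e.\ $q\ge 0$, you must separately exclude a generator in degree $(p,-1)$ multiplied by some $a_i$.

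Your fallback reduction is misstated, since a bound on $a/(b+1)$ does not bound degrees. What the cone description gives is this: the kernel in \eqref{ses: def V1 bigraded} has no sections when $p+q\le d-4$, so a nonzero section needs $H^0(\O(p+1,q-1))\neq 0$, i.e.\ $p\ge -1$ and $q\ge 1$. Together with $p+q\le d-4$ this is a finite set of bidegrees.

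The real gap is the step you flag as the main obstacle. You write $\sigma=\alpha u+\beta v$ with $\alpha,\beta$ global sections on $X_1$ and test $\gcd(\alpha,\beta)$ modulo $g$. This presupposes a global splitting of $V_1^\vee$ that you do not have. $\sigma$ is a section of a rank-two bundle, and in the model you compute in ($V_1=\K'/\O$) it is a $4$-tuple in $K'$ modulo $e=(0,a_0,a_1,a_2)^T$. So the decomposition is only local, and the proposed test cannot be run as stated.

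Your criterion itself is correct: $Z_\sigma$ is irreducible iff the zero scheme of $\sigma$ on $X_1$ has no divisorial component. Its global form is that the $2\times 2$ minors of $[\sigma\mid e]$ cut out a subset of codimension at least $2$ in $X_1$.

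The paper, however, needs no such computation; it takes $\sigma$ to be a minimal generator of $M_V$. If $Z_\sigma$ were reducible, then by \Cref{lemma:Demailly3.3}(a) one piece would have class $r\xi_1+tH$ and hence be pulled back from $X_1$. That forces $\sigma=s\sigma_0$ with $s\in S$ of nonzero bidegree, contradicting minimality (\Cref{lemma: Zalpha irreducible}).

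For the second group, the computation yields a unique minimal generator of total degree $\le d-4$, in bidegrees $(8,1)$, $(8,1)$, $(11,1)$, i.e.\ $(a,b)=(1,2),(1,2),(3,2)$. The inequality then reads $1/3<103/120$ for $d=13$ (your $73/120$ is an arithmetic slip) and $1<13/5$ for $d=18$.
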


For curves satisfying criterion (III) in \Cref{test theorem}, our explicit computations allow us to not only confirm {\GGL}, but also fully characterize the exceptional locus.
\begin{cor}\label{cor: exceptional locus description}
    For each of the following curves, there are least two sections of $H^0(X_2, \O_{X_2}(\xi_2+b\xi_1-aH))$, possibly for different $a,b > 0$, whose common vanishing locus has codimension 2 in $X_2$.
    Hence, their complements satisfy {\GGL} by \Cref{test theorem}(III) and their exceptional loci are described in the table below. 
    \begin{center}
    \renewcommand{\arraystretch}{1.4} 
    \begin{tabular}{@{} l p{8.5cm} @{}}
        \toprule
        \textup{\textbf{Polynomial defining}} $C\subset\PP^2$ & \textup{\textbf{Exceptional locus}} $\operatorname{Exc}(\PP^2\setminus C)$ \\
        \midrule
        
        $x_0^{2n} + x_0^n x_1^n + x_1^{2n} + x_1 x_2^{2n-1}$ \ ($n \ge 7$) & 
        The union of the $2n+1$ lines $\lambda x_0 - x_1 = 0$, \newline 
        where $\lambda = 0$ or $\lambda^{2n} + \lambda^n + 1 = 0$. \\
        
        $x_0^{2n+1}  + x_0^{n}x_1^{n+1} + x_1^{2n+1} + x_1x_2^{2n}$ ($n \geq 7$) & 
        The union of the $2n+2$ lines $\lambda x_0 - x_1 = 0$, \newline 
        where $\lambda = 0$ or $\lambda^{2n+1}+\lambda^{n+1} + 1 = 0$. \\
        
        $x_0^{2n} + x_0^nx_1^n + x_1^{2n} + x_1x_2^{2n-1} + x_2^{2n}$, ($n\geq 7$) & 
        The union of the $2n$ lines $\lambda x_0-x_1 = 0$, \newline
        where $\lambda^{2n}+\lambda^n + 1 = 0$. \\
        
        $x_0^{2n} + x_1^{2n} + x_2^{2n} + c x_0^nx_1^n$, ($n \geq 7$, $c\notin\{0,\pm 2\}$) & 
        The union of the $2n$ lines $\lambda x_0-x_1 = 0$, \newline
        where $\lambda^{2n}+c\lambda^n + 1 = 0$ \\
        
        $x_0^{14}+2x_0^7x_1^7+x_1^{14}-x_1^{13}x_2+x_2^{14}$ & 
        $\emptyset$ \ (The complement is hyperbolic) \\
        \bottomrule
    \end{tabular}
    \end{center}
\end{cor}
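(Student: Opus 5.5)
Most of this corollary is computation, so my plan separates what the machine supplies from what needs an argument. For each family I will compute, with the module from \Cref{thm:introAlgorithm}, two sections $\sigma_1 \in H^0(X_2,\O_{X_2}(\xi_2+b_1\xi_1-a_1H))$ and $\sigma_2 \in H^0(X_2,\O_{X_2}(\xi_2+b_2\xi_1-a_2H))$ with no common divisorial component. Then \Cref{test theorem}(III) gives {\GGL}. It also says every entire curve $\gamma\colon\C\to\PP^2\setminus C$ has its second lift $\gamma_{[2]}$ contained in the codimension-2 locus $Z=V(\sigma_1,\sigma_2)\subset X_2$.

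\emph{Producing the sections for all $n\ge 7$.} The families are weighted-homogeneous in $(x_0,x_1)$ with a single extra term in $x_2$. I therefore expect the sections to have a uniform shape. On the affine chart they should be log Wronskian-type expressions in $x_1/x_0$ and $x_2$, built from the logarithmic derivative of the defining polynomial $F$. The natural candidates come from the $\C^*$-action $(x_0,x_1,x_2)\mapsto(x_0,\mu x_1,\mu^{k}x_2)$ that preserves the monomials of $F$ up to weight. Its associated vector field $v$ is tangent to $C$, and the determinant $dF/F\wedge(\cdot)$ against $v$ yields explicit jet differentials. The plan is to compute small $n$ by machine, read off the pattern, and then verify symbolically, uniformly in $n$, that the formulas extend to sections of the claimed bundles with $a_i,b_i>0$. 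The positivity of $a$ for $n\ge7$ (degree $\ge 14$) is the place where the bound on $n$ enters.

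\emph{Computing the exceptional locus.} Project $Z$ to $X_1$ and then to $\PP^2$. Entire curves are forced into curves $W\subset\PP^2$ whose lifts lie in $Z$. Concretely, one solves for the 1-dimensional components of $Z$ that are tangent to the Demailly distribution on $X_2$. Components of $Z$ that are vertical over a point, or not horizontal for the contact structure, cannot contain nonconstant lifts. For the surviving components, I expect $\sigma_1,\sigma_2$ to factor so that $Z$ is supported over the invariant orbit curves of $v$: the lines $x_1=\lambda x_0$ through $[0:0:1]$, together with $x_2=0$ or similar.

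For each candidate line $L$, I then decide whether $L\setminus C$ contains a nonconstant entire curve. This happens exactly when $|L\cap C|\le 2$ as a set. Restricting $F$ to $x_1=\lambda x_0$ gives $x_0^{2n}(1+\lambda^n+\lambda^{2n})+\lambda x_0x_2^{2n-1}$. When $\lambda^{2n}+\lambda^n+1=0$, this becomes $\lambda x_0x_2^{2n-1}$, so $L\cap C$ is two points and $L\setminus C\cong\C^*$. The case $\lambda=0$ gives $x_0^{2n}$, one point, so $L\setminus C\cong\C$. Generic $\lambda$ gives at least three points, which is hyperbolic. The other rows follow the same analysis; in the last row no line survives, giving $\emptyset$.

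\textbf{Main obstacle.} The hard step is proving that $Z$ has no other horizontal components. For a fixed degree this is a primary decomposition in Macaulay2 of the ideal of $Z$ on the explicit model of $X_2$ over $X_1\subset\PP^2\times\PP^2$. For the infinite families, I would try to exploit the $\C^*$-symmetry: reduce $Z$ modulo the action to a quotient where the decomposition becomes independent of $n$. If that fails, I would settle for a uniform-in-$n$ resultant computation, backed by machine checks for small $n$.
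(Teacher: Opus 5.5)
Your lower bound is fine. Each listed line $L$ meets $C$ in at most two points, so $L\setminus C$ is $\C$ or $\C^*$, and your restriction of $f$ to $x_1=\lambda x_0$ is correct. You should add $\lambda=\infty$, the line $x_0=0$, which meets $C$ in $2n$ points.

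The gap is the upper bound, and it comes from a wrong picture of $Z=V(\sigma_1,\sigma_2)$. This is a pure $2$-dimensional subvariety of the $4$-fold $X_2$, so it has no $1$-dimensional components to solve for. Its image in $X_1$ is the locus where the two sections are parallel, cut out by the $3\times 3$ minors of $[\sigma_1,\sigma_2,e]$. That image is a divisor whose relevant components \emph{dominate} $\PP^2$; they are not supported over finitely many lines. For the first family they are $x_1a_0-x_0a_1=0$ and $x_2a_1+(2n-1)x_1a_2=0$, besides the non-dominant $x_2=0$. A dominant component allows finitely many directions at a general point, so it is a (multi)foliation on $\PP^2$. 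The condition $\gamma_{[2]}(\C)\subset Z$ only says that $\gamma$ is tangent to it, so every leaf is a candidate. The missing step is to use El Goul's logarithmic degeneracy theorem (as in the proof of \Cref{thm:McQuillan}) to force entire curves into algebraic leaves. You must then find all algebraic leaves of each component and test them for hyperbolicity in $\PP^2\setminus C$.

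The paper does this with first integrals. The function $x_1/x_0$ gives your pencil of lines through $[0:0:1]$. The function $x_1x_2^{2n-1}/f$ gives a second pencil, $s(x_0^{2n}+x_0^nx_1^n+x_1^{2n})+tx_1x_2^{2n-1}=0$, which your plan never sees. Here one must check that members with $st\neq 0$ are smooth of degree $2n\ge 14$, hence hyperbolic, and examine the two reducible members. The same work is needed for the pencils spanned by $f$ and $x_2^{2n}$ (fourth family), by $f$ and $x_1x_2^{2n-1}+x_2^{2n}$ (third family), and by $f$ and $x_2^{14}$ (last curve). In these pencils the punctures of a member are exactly the base points.

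For the last curve the parallel locus also has a component of degree $2$ in $a$. This is a $2$-web whose general leaves are transcendental: in $u=x_0/x_1$, $v=x_2/x_1$ they solve $7v(14-13v)\,du^2+13u^2\,dv^2=0$. Saying ``no line survives'' proves nothing there. You need the degeneracy theorem together with the list of algebraic solutions $u=0$, $v=0$, $13v=14$, $x_1=0$. So your ``main obstacle'' is misplaced: factoring the parallel locus is routine, and the leaf analysis is the real work.

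Separately, your heuristic for producing sections rests on a false claim. No nonzero vector field on $\PP^2$ is tangent to a smooth curve of degree $\ge 4$, since $H^0(T_{\PP^2}(-\log C))$ is the Lie algebra of the finite stabilizer of $f$. Concretely, $(x_0,\mu x_1,\mu^k x_2)$ does not preserve $x_0^{2n}+x_0^nx_1^n+x_1^{2n}$ up to scalar. The lines through $[0:0:1]$ appear only as leaves of the component $x_1a_0-x_0a_1=0$. Your fallback of machine computation plus symbolic verification of general formulas is what the paper actually does (see the Appendix).
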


Finally, there are curves where none of the conditions (I)--(III) of \Cref{test theorem} hold, so that our test is inconclusive.

\begin{cor}
    For each of the following curves, $h^0(X_2,\O(\xi_2+b\xi_1 - aH)) = 1$ for a single pair $(a,b)$ satisfying $\frac{a}{b + 1} \geq \frac{4d^2-51d+90}{12(d-3)}$, and $h^0(X_2,\O(\xi_2+b\xi_1 - aH)) = 0$ for all other $(a,b)$ with $a,b > 0$.
    Thus, \Cref{test theorem} cannot be applied: 
    \begin{itemize}
        \item $x_0^d + x_1^d + x_2^d$ ($7 \leq d \leq 20$)
        \item $x_0^d + x_2^d - x_1^{d-1}x_2$ ($8 \leq d \leq 20$)
        \item $x_0^d + x_1^d + x_2^d + x_0^{d-1}x_1$ ($8 \leq d \leq 20$)
        \item $x_0^d + x_1(x_0^{d-1} + x_1^{d-1} + x_2^{d-1})$ ($9 \leq d \leq 20$)
        \item $x_0^d + x_1^d + x_2^{d-2}(x_0^2 + x_1^2 + x_2^2)$ ($9 \leq d \leq 20$)
        \item $x_0^{d-1}x_1 + x_1^{d-1}x_2 + x_2^{d-1}x_0$ ($10 \leq d \leq 20$)
        \item $x_0^d + x_1^d + x_2^{d-2}(x_0^2 + x_0x_1 + x_0x_2 + x_1^2 + x_1x_2 + x_2^2)$ ($d = 15,16)$
    \end{itemize}
\end{cor}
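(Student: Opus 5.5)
This corollary is a computational statement, so the plan is to certify it with the effective procedure of \Cref{thm:introAlgorithm}. For each listed curve $C$ and each degree $d$ in the stated range, we realize $X_1=\PP T_{\PP^2}(-\log C)$ as a hypersurface in $\PP^2\times\PP^2$. We then form the finitely generated bigraded module whose graded pieces compute $H^0(X_2,\O_{X_2}(\xi_2+b\xi_1-aH))$, as made precise in \Cref{V1 as maps of modules} and \Cref{lemma:sections of V1}. Finite generation is what allows finitely many computations to cover all $a,b>0$.

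The first step is to reduce to a finite region of pairs $(a,b)$. A nonzero section of $\O_{X_2}(\xi_2+b\xi_1-aH)$ pushes forward to a nonzero section of a twist of a bundle on $X_1$. By the description of the effective cone of $X_1$ (the classes $r\xi_1+tH$ admitting sections), such a section forces an inequality $a\le \alpha(b+1)$ for an explicit slope $\alpha$ depending on $d$.

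Next we show that only finitely many $b$ need checking. The plan is to use the module structure: once $b$ exceeds the maximal $\xi_1$-degree of a generator, every section in bidegree $(b,-a)$ is a multiple of a lower-degree generator by a section of $\O(\xi_1)$-type twists. Such twists carry nonnegative $H$-weight, so a negative twist in $H$ at high $b$ must already occur among the generators. Concretely, the module is generated in finitely many bidegrees $(b_i,-a_i)$. A nonzero piece with $a>0$ then exists iff some generator has $a_i$-degree below the $H$-weight available from multiplication. This reduces the claim to computing the generators and checking that exactly one of them lies in the region $a,b>0$ with the claimed one-dimensional span, and that no products land there.

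For each curve we then run the computation in Macaulay2, up to $d=20$, and record three things:
\begin{itemize}
\item the unique pair $(a,b)$ with $h^0=1$;
\item verification that $\frac{a}{b+1}\ge \frac{4d^2-51d+90}{12(d-3)}$ at that pair;
\item verification that all other pairs $a,b>0$ give $0$.
\end{itemize}
For Fermat-type and highly symmetric curves, the unique section should be explicit: it is the invariant 2-jet built from the Wronskian-type expression in the logarithmic derivatives of $C$'s equation. This can serve as a sanity check on the output.

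Finally we draw the conclusion. With $h^0=0$ except at one pair, (I) fails because a section exists. (III) fails because two sections $\sigma_1,\sigma_2$ would have to both come from the same one-dimensional space, hence be proportional and share their divisor. (II) fails because the only pair violates the inequality. Therefore \Cref{test theorem} does not apply.

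The main obstacle is the finiteness step: rigorously bounding the range of $(a,b)$ that must be inspected so that a finite computation certifies vanishing for all $a,b>0$. I would first try to extract this bound directly from the generator degrees of the module in \Cref{V1 as maps of modules}. If that fails, I would fall back on the effective-cone inequality on $X_1$ combined with Castelnuovo–Mumford regularity of the module in the $\xi_1$-direction.
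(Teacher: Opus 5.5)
\textbf{The main gap: the target you set out to certify is false.} Your route matches the paper's: run the Macaulay2 script for $M_V$ and read off generators via \Cref{V1 as maps of modules}. But the computation would refute, not confirm, your target, and the step ``no products land there'' is false.

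If $\sigma$ is the section at the distinguished pair $(a,b)$, then $x_i\sigma$ and $a_i\sigma$ are nonzero sections of $\O(\xi_2+b\xi_1-(a-1)H)$ and $\O(\xi_2+(b+1)\xi_1-(a-1)H)$ respectively. This is because $x_i$ and $a_i$ are sections of $\O(H)$ and $\O(\xi_1+H)$, and $V_1$ is locally free on the integral variety $X_1$. So whenever $a\geq 2$ you get $h^0\geq 3$ at two further pairs with $a,b>0$. This happens for every listed curve except at the lowest degree of each of the first six ranges, where $p+q=d-4$ and hence $a=1$.

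For instance, for the Fermat curve of degree $16$ the generator of bidegree $(2,1)$ sits at $(a,b)=(10,2)$, and $x_0\sigma,x_1\sigma,x_2\sigma$ give $h^0(X_2,\O(\xi_2+2\xi_1-9H))\geq 3$. So the statement as literally worded cannot be certified. What the paper actually verifies, in the table of the last subsection of \Cref{sec:examples}, is that $M_V$ has exactly one minimal homogeneous generator $\sigma$ of total degree $p+q\leq d-4$, and that its bidegree violates \eqref{ineq:module_too_negative_section}. That is what you should check.

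\textbf{Consequences for (I)--(III), and the finiteness step.} Your arguments that (II) and (III) fail rest on one-dimensionality, so they must be replaced. Since $S$ lives in nonnegative bidegrees, multiplication never lowers $p+q$. With a single minimal generator of total degree $\leq d-4$, every section with $a,b>0$ therefore has the form $s\sigma$ with $s\in S$. The three criteria then fail as follows.
\begin{itemize}
\item (I) fails because $\sigma\neq 0$.
\item (III) fails because all such sections vanish along $Z_\sigma$.
\item (II) fails because, for nonconstant $s$, the vanishing locus of $s\sigma$ contains both $Z_\sigma$ and the pullback of $\{s=0\}$, so it is reducible.
\end{itemize}
This reducibility is genuinely needed for (II): for the Fermat curve of degree $16$, $a_0^9\sigma$ lies in bidegree $(2,10)$, i.e.\ at $(a,b)=(1,11)$, which does satisfy the inequality.

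Finally, the finiteness step you flag as the main obstacle is not one, and no regularity argument is needed. \Cref{V1 as maps of modules} packages all $a,b>0$ into one finitely generated module. Because $p+q$ never drops under multiplication, generators of larger total degree, such as those of bidegree $(15,-1)$ in the Fermat example, are irrelevant. Independently, \eqref{ses: def V1 bigraded} together with \Cref{thm:eff_cone} shows that a nonzero section with $a,b>0$ forces $b\geq 2$ and $a+b\leq d-1$, so only finitely many pairs can occur.
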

We remark that by \cite[Theorem 1]{toda1971functional}, the complement of the Fermat curve $x_0^d + x_1^d + x_2^d$ satisfies {\GGL} for $d\geq 7$.
On the other hand, the curve $x_0^d + x_1^d + x_2^{d-2}(x_0^2 + x_1^2 + x_2^2)$ is proved to be hyperbolic for all $d\geq 9$ and $d\not\equiv 2 \mod 4$ in the recent paper \cite[Theorem A]{nguyen2026hyperbolicity}.

We believe that our computational tools will be useful in testing {\GGL} for other curves in $\PP^2$, and hope that this framework will move us closer to understanding {\GGL} for complements of all smooth plane curves of large degree.

Recent work of Hou, Huynh, Merker, and Xie \cite{HouHuynhMerkerXie2026} develops a different computer-assisted method that works for the full spaces $E_{2,m}\Omega_{\PP^2}(\log C)\otimes\O_{\PP^2}(-a)$.
For fixed $(m,a)$, they write every possible section on an affine chart in terms of finitely many unknown coefficient polynomials. 
Requiring this expression to remain holomorphic when rewritten in the other affine charts imposes linear equations on those coefficients.
The desired space of sections is then recovered as the kernel of this linear system.
Thus, their method computes both the dimension of $H^0(\PP^2, E_{2,m}\Omega_{\PP^2}(\log C)\otimes\O_{\PP^2}(-a))$ and explicit representatives of all its sections. 
Their implementation is used primarily to establish vanishing results that lead to the hyperbolicity of the complement of a generic plane curve of degree at least 12.

Our approach is different.
For any given smooth plane curve we compute the spaces of sections
\[
    H^0(X_2,\O_{X_2}(\xi_2+b\xi_1 - aH)) \qquad \text{ for all } a,b > 0.
\]
By pushing the tautological line bundle $\O_{X_2}(\xi_2)$ down to $X_1\subset \PP^2\times\PP^2$, we collect these spaces of global sections into a single bigraded module over the bihomogeneous coordinate ring of $X_1$.
In this description, each section is represented globally by a four-tuple of bihomogeneous polynomials.
This particularly simple and explicit representation makes their zero loci accessible to direct algebraic computation, allowing us to obtain the explicit results in \Cref{cor: exceptional locus description}.


The outline of the paper is as follows.
In \Cref{sec:jet-diff}, we review invariant jet differentials and establish the criterion on the dimension of the stable base locus used throughout the paper.
In \Cref{sec:1-jets}, we realize $X_1$ as a hypersurface in $\PP^2\times\PP^2$ and describe its effective and nef cones. 
In \Cref{sec:2-jets}, we revisit results of El Goul regarding 2-jet differentials in the setting of complements of smooth plane curves and prove the {\GGL} criterion of Theorem 1.1. 
In \Cref{sec:Explicit Computations}, we give the bigraded-module description underlying Theorem 1.2 (Corollary 5.4) and a Macaulay2 implementation. 
Finally, the computations establishing Corollaries 1.3, 1.4, and 1.5 are carried out in Sections 6.1–6.2, 6.3, and 6.4, respectively.
The Macaulay2 files used to verify the computations in \Cref{sec:examples} are available at \url{https://github.com/jazieltorres/jet-differentials-plane-curves}.

\textbf{Acknowledgments:} We gratefully acknowledge conversations with Izzet Coskun, Jon Hauenstein, Tony V\'{a}rilly Alvarado, Noa Hunter and Bruno de Oliveira. 
This research was supported in part by the Notre Dame Center for Research Computing through access to its high-performance computing clusters and data storage infrastructure.
Eric Riedl was supported by NSF CAREER grant DMS-1945944 and Simons Foundation grants 00011850 and 00013673.

\section{Invariant Jet Differentials}\label{sec:jet-diff}
Let $X$ be a smooth projective variety of dimension $n$ with a simple normal crossing divisor $D$.
The logarithmic tangent sheaf $T_{X}(-\log D)$ of $X$ along $D$ is the subsheaf of the tangent sheaf $T_{X}$ consisting of holomorphic vector fields which are tangent to $D$.
As such, it fits in the exact sequence
\[\begin{tikzcd}
    0 \rar & T_{X}(-\log D) \rar & T_{X} \rar{\rho} & \O_D(D).
\end{tikzcd}
\]
The map $\rho$ acts on vector fields as restriction to $D$ followed by projection to the normal direction. 
If $U = X\setminus D$, then $T_{X}(-\log D)|_U \cong T_U$.
On the other hand, if $z_1,\dots, z_n$ are local coordinates for $X$ around $p\in D$, near which $D = \{z_1\cdots z_r = 0\}$, then the stalk $T_{X}(-\log D)_p$ is the free $\O_{X,p}$-module generated by 
\[
    z_1\frac{\partial}{\partial z_1},\dots, z_r\frac{\partial}{\partial z_r}, \frac{\partial}{\partial z_{r+1}}, \dots, \frac{\partial}{\partial z_n}.
\]
As $D$ is assumed simple normal crossing, $T_{X}(-\log D)$ is locally free of rank $n = \dim X$.
Note that $\rho$ is surjective if $D$ is smooth. 

Starting with the pair $\big(X,T_{X}(-\log D)\big)$ one can construct a tower of projective bundles
\[\begin{tikzcd}
   \cdots\rar & X_k \rar["\pi_k"] & \cdots\rar & X_2 \rar["\pi_2"] & X_1 \rar["\pi_1"] & X_0 = X,
\end{tikzcd}
\]
with the property that each $X_k$ is endowed with a locally free sheaf $V_k$ and $X_{k+1} = \PP V_k$.
Starting with $X_0 = X$ and $V_0 = T_{X}(-\log D)$, the next level is constructed as follows.
Set $X_1 = \PP V_0$ and let $\pi_1:X_1\to X$ be the natural projection.
The projective bundle $X_1$ is equipped with a tautological line bundle $\O_{X_1}(-1) \subset \pi_1^*V_0$.
Define $V_1$ as the subbundle of $T_{X_1}(-\log \pi_1^*D)$ that is the preimage of the tautological line bundle.
As such, $V_1$ fits in the exact sequence
\begin{equation}\label{seq:V1}
\begin{tikzcd}
    0 \rar & T_{X_1/X_0}\rar & V_1\rar{d\pi_1} & \O_{X_1}(-1)\rar & 0.
\end{tikzcd}
\end{equation}
In other words, the fiber of $V_1$ over the point $(x,[v])\in X_1$ is the set of logarithmic tangent vectors in the fiber of $T_{X_1}(-\log \pi_1^*D)$ over $(x,[v])$ whose projection to $X$ is contained in the line in $T_{X}(-\log D)_x$ spanned by $v$.
Given $X_1$ and $V_1$, we set $X_2 = \PP V_1$ and the construction continues inductively defining $V_k$ via a sequence analogous to \eqref{seq:V1}.

An \textit{entire curve} in $X\setminus D$ is a nonconstant holomorphic map $\gamma:\C\to X\setminus D$, and the \textit{exceptional locus} of $X\setminus D$ is the Zariski closure of the union of the images of all entire curves. 
Let $\pi:X_k\to X$ be the projection.
An entire curve has a unique and well-defined lift $\gamma_{[k]}:\C\to X_k\setminus \pi^*D$.
The direct image sheaf $\pi_*\O_{X_k}(m)$ is the bundle $E_{k,m}\Omega_X(\log D)$ of invariant jet differential operators of order $k$ and total degree $m$.
This is the set of germs of polynomial differential operators on germs of holomorphic curves, where the action is invariant under reparametrization. 
For the case of $1$-jets, $E_{1,m}\Omega_X(\log D) = \sym^m\Omega_X(\log D)$, and for $2$-jets there is a filtration 
\begin{equation}\label{eq:Gr_filtration}
    \Gr E_{2,m}\Omega_X(\log D) = \bigoplus_{0 \leq i \leq \lfloor m/3 \rfloor} \sym^{m-3i}\Omega_X(\log D)\otimes\overline{K}_X^{\otimes i},
\end{equation}
where $\overline{K}_X = K_X\otimes\O(D)$ is the logarithmic canonical sheaf of $(X,D)$. 
For a comprehensive analysis on invariant jet differential operators, see \cite{demailly1997algebraic,dethloff2001logarithmic}.
Their usefulness in the study of the exceptional locus of $X\setminus D$ becomes clear from the following.
\begin{thm}\label{thm:ggl_locus}
    Fix positive integers $k,m$ and an ample line bundle $A$ on $X$.
    Assume 
    \[
        H^0(X_k,\O_{X_k}(m)\otimes\pi^*A^{-1}) \cong H^0(X,E_{k,m}\Omega_X(\log D)\otimes A^{-1})
    \]
    has nonzero sections $\sigma_1,\dots,\sigma_N$ and denote by $Z\subset X_k$ their base locus.
    Then any entire curve $\gamma:\C\to X\setminus D$ must satisfy $\gamma_{[k]}(\C)\subseteq Z$.
\end{thm}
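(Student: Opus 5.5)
This is the fundamental vanishing theorem of Green--Griffiths and Demailly, in the logarithmic form of Dethloff--Lu. The plan is to reduce it to the case of a single section and then show that each $\sigma_j$ pulls back to zero along $\gamma_{[k]}$.

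Since $Z$ is the common zero locus of $\sigma_1,\dots,\sigma_N$, it suffices to prove that $\sigma_j(\gamma_{[k]}) \equiv 0$ for each fixed $j$. Fix one section $\sigma$ and view it through the isomorphism in the statement as a global invariant jet differential $P\in H^0(X,E_{k,m}\Omega_X(\log D)\otimes A^{-1})$. Evaluating $P$ on the $k$-jet of $\gamma$ gives a holomorphic section $f(t)=P(\gamma',\dots,\gamma^{(k)})$ of $\gamma^*A^{-1}$ over $\C$. By the tautological construction of $\O_{X_k}(-1)$, this function vanishes identically exactly when $\gamma_{[k]}(\C)\subseteq\{\sigma=0\}$. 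Here we use that $\gamma_{[k]}'$ is a nonzero vector of $\O_{X_k}(-1)$ along the lift, away from a discrete set of stationary points, and the identity theorem handles those points.

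To show $f\equiv 0$, choose a smooth hermitian metric $h$ on $A$ with positive curvature $\omega_A$. Fix smooth hermitian metrics on the logarithmic cotangent bundle $\Omega_X(\log D)$. Because $P$ has coefficients in logarithmic forms, in local coordinates near $D=\{z_1\cdots z_r=0\}$ it is a weighted polynomial in
\[
(\log z_i)^{(\ell)} \quad (i\le r), \qquad z_j^{(\ell)} \quad (j>r),
\]
with $\ell\le k$, of weighted degree $m$. Suppose for contradiction that $f\not\equiv 0$, and consider the function $u=\log\|f\|_{h^{-1}}^2$ on $\C$. Where $f\neq 0$ we have
\[
dd^c u \ \ge\ \gamma^*\omega_A \quad\text{(up to a positive constant)},
\]
since $f$ is a holomorphic section of $\gamma^*A^{-1}$ and $A^{-1}$ carries the dual metric. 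Thus $u$ is subharmonic with a strictly positive lower bound on its Laplacian along $\gamma$.

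The main obstacle is producing a contradiction from this positivity, and this is where the logarithmic derivative lemma enters. Integrate via Nevanlinna theory, that is, Jensen's formula on discs $|t|<r$. The left side gives
\[
T_\gamma(r,A)\ \le\ \tfrac{1}{2}\int_0^{2\pi} u(re^{i\theta})\,\tfrac{d\theta}{2\pi} + O(1).
\]
On the right, $\|f\|$ is bounded by a polynomial in the quantities $\|(\log g)^{(\ell)}\|$ and $\|g^{(\ell)}\|$, where the $g$ are coordinate functions composed with $\gamma$, meromorphic on $\C$. This bound is global after covering $X$ by finitely many charts with a partition of unity. Because $\gamma$ avoids $D$, the local defining functions of $D$ along $\gamma$ have no zeros, and the terms $(\log z_i)^{(\ell)}$ are logarithmic derivatives. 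The logarithmic derivative lemma then gives
\[
\int \log^+\big|(\log g)^{(\ell)}\big|\,\frac{d\theta}{2\pi} = O\big(\log T_\gamma(r)+\log r\big) \quad \|\,,
\]
where $\|$ means the estimate holds outside a set of finite Lebesgue measure. The non-logarithmic terms are handled similarly, by writing $g^{(\ell)} = g\cdot(g^{(\ell)}/g)$ for rational coordinates. Combining the two sides yields
\[
T_\gamma(r,A) \le O(\log T_\gamma(r,A) + \log r) \quad \|.
\]
This forces $\gamma$ to be constant, or else rational when $X$ is compact, which is impossible for a nonconstant curve into the hyperbolic-type complement of the weight argument. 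More precisely, $T_\gamma(r,A)\ge c\log r$ with $c>0$ for every nonconstant $\gamma$, so we need a genuine contradiction. To get one, apply the estimate with $A$ replaced by $A^{p}$, which is allowed because $f^p$ is a section of $\gamma^*A^{-p}$. Then $p\,T_\gamma(r,A)\le O(\log T+\log r)$ for every $p$, and letting $p$ grow gives the contradiction.

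Alternatively, I would simply cite Dethloff--Lu's logarithmic fundamental vanishing theorem \cite{dethloff2001logarithmic} for the key step.
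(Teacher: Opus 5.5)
The paper does not prove this statement. It quotes it as the known (logarithmic) fundamental vanishing theorem, pointing to Demailly and Dethloff--Lu \cite{dethloff2001logarithmic}, so your closing fallback of citing Dethloff--Lu is exactly the paper's route. Your self-contained Nevanlinna argument is the standard one and is essentially fine up to
\[
T_\gamma(r,A)\le O\big(\log T_\gamma(r,A)+\log r\big)\quad \|,
\]
with two caveats. First, the splitting $g^{(\ell)}=g\cdot(g^{(\ell)}/g)$ may only be used for those $t$ with $\gamma(t)$ in a compact piece on which $|g|$ is bounded; otherwise you pick up $m(r,g\circ\gamma)$, which is of the size of $T_\gamma(r)$ rather than small. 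Second, a small slip: the fiber of $\O_{X_k}(-1)$ at $\gamma_{[k]}(t)$ is spanned by $\gamma_{[k-1]}'(t)$, not by $\gamma_{[k]}'(t)$.

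The genuine gap is the last step. Replacing $A$ by $A^p$ and $f$ by $f^p$ gains nothing. Indeed, $T_\gamma(r,A^p)=p\,T_\gamma(r,A)$ and $\int\log^+\|f^p\|=p\int\log^+\|f\|$, so the inequality is merely multiplied by $p$, and the constant in the $O(\cdot)$ grows with $p$.

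What your estimate actually yields, after removing the exceptional set using monotonicity of $T$, is $T_\gamma(r,A)=O(\log r)$, i.e.\ $\gamma$ is rational. This case is not vacuous: for the curve $x_0^{2n}+x_0^nx_1^n+x_1^{2n}+x_1x_2^{2n-1}=0$ in the paper, the line $x_1=0$ meets $C$ only at $[0:0:1]$, giving a rational entire curve in $\PP^2\setminus C$. So the sentence calling such curves ``impossible'' must go, and the rational case needs its own argument. Either of the following works.
\begin{enumerate}
\item Each $g\circ\gamma$ is then a rational function of $t$. Hence $(g\circ\gamma)^{(\ell)}/(g\circ\gamma)$ and $(\log (g\circ\gamma))^{(\ell)}$ are $O(|t|^{-\ell})$ as $|t|\to\infty$. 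The right-hand side of your Jensen inequality is therefore $O(1)$ for large $r$, contradicting $T_\gamma(r,A)\ge c\log r$.
\item Extend $\gamma$ to $\overline{\gamma}:\PP^1\to X$ and put $t=1/s$, so $d/dt=-s^2\,d/ds$. Then $(z_j\circ\gamma)^{(\ell)}=O(s^{\ell+1})$. Even if $\overline{\gamma}(\infty)\in D$, one gets $(\log z_i\circ\gamma)^{(\ell)}=O(s^{\ell})$. Weighted homogeneity then gives $P(\gamma',\dots,\gamma^{(k)})=O(s^m)$ near $s=0$, so it extends to a nonzero holomorphic section of $\overline{\gamma}^*A^{-1}$ on $\PP^1$. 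This bundle has negative degree, which is a contradiction.
\end{enumerate}
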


Fix an ample line bundle $A$ on $X$ and let $B_k\subset X_k$ be the stable base locus
\[
    B_k = \bigcap_{m\in\N} \bs\big|\O_{X_k}(m)\otimes\pi^*A^{-1}\big|.
\]
If the projection of some $B_k$ down to $X$ is a proper closed set, then $X$ satisfies the {\GGL} Conjecture.
One approach to the {\GGL} Conjecture is thus to find enough algebraically independent sections in $H^0(X_k,\O_{X_k}(m)\otimes\pi^*A^{-1})$ in order to reduce as much as possible the dimension of the nonvertical components of $B_k$.
Here \textit{vertical} means two things: components that are completely contained in a fiber of $X_k\to X$ or the divisor $\Gamma_k = \PP(T_{X_{k-1}/X_{k-2}})\subset X_k$.
The divisor $\Gamma_k$ records the projectivized tangent directions that are vertical with respect to the projection $X_{k-1}\to X_{k-2}$ and, as such, cannot contain lifts of entire curves from $X$.

Reducing the dimension of the nonvertical components of $B_k$ to zero or one automatically shows {\GGL} for the logarithmic space $(X,D)$.
Moreover, in the case where $(X,D)$ is a logarithmic surface of general type, reducing the nonvertical components of $B_k$ to dimension 2 is enough to conclude {\GGL} even when these components dominate $(X,D)$.
The argument proceeds by associating to such a two-dimensional component of $B_k$ a foliation on a desingularization and then combining El Goul's logarithmic degeneracy theorem with Jouanolou's theorem on algebraic leaves.

\begin{thm}\label{thm:McQuillan}
    If $(X,D)$ is a logarithmic surface of general type and if the nonvertical components of $B_k$ have dimension at most 2, then $X\setminus D$ satisfies {\GGL}, i.e., there is a proper subvariety $Z\subset X$ containing all entire curves on $X\setminus D$. 
\end{thm}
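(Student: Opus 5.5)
We prove \Cref{thm:McQuillan} by the McQuillan-style argument sketched just before the statement. Suppose, for contradiction, that the entire curves of $X\setminus D$ are Zariski dense. Let $\gamma:\C\to X\setminus D$ be an entire curve with Zariski dense image; if every entire curve has non-dense image, the Zariski closure of their union is the desired proper $Z$, provided only finitely many curves occur, and we return to this point below. By \Cref{thm:ggl_locus}, applied to the finitely many sections generating the base ideal for each sufficiently divisible $m$, the lift $\gamma_{[k]}(\C)$ lies in $B_k$. Since $\gamma$ is nonconstant, $\gamma_{[k]}$ does not lie in a fiber of $X_k\to X$ or in $\Gamma_k$, so it lies in a nonvertical component $W$ of $B_k$. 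Because $\gamma$ is dense in $X$, $W$ dominates $X$, so $\dim W\ge 2$. By hypothesis $\dim W=2$, and therefore $\pi|_W:W\to X$ is generically finite.

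The key step converts $W$ into a foliation. At a general point $w\in W$, the image of $w$ in $X_1$ (under the composite $X_k\to X_1$) is a line in $T_X(-\log D)_{\pi(w)}$. Since $W\to X$ is generically finite, after passing to a desingularization $\widetilde W$ and a finite (possibly multivalued) correspondence we obtain a rank-one subsheaf of $T_{\widetilde W}(-\log \widetilde D)$, where $\widetilde D$ is the preimage of $D$. This defines a holomorphic foliation $\mathcal F$ on $\widetilde W$, logarithmic along $\widetilde D$. Because $\gamma_{[1]}'$ is tangent to the tautological line, the lift of $\gamma$ to $\widetilde W$ is tangent to $\mathcal F$. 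This uses that $\gamma_{[k]}$ stays in $W$ and that its derivative lies in the distribution $V_{k-1}$. We then pull back the log-general-type condition: $K_{\widetilde W}+\widetilde D\ge \pi^*(K_X+D)$ by the logarithmic ramification formula, so it is big.

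We then apply El Goul's logarithmic degeneracy theorem: on a logarithmic surface of general type, an entire curve tangent to a (log) foliation is algebraically degenerate. This is the analog of McQuillan's theorem. It contradicts the density of $\gamma$, so every entire curve is contained in an algebraic curve, and each such curve is $\mathcal F$-invariant whenever it comes from $W$. To bound the union, we use Jouanolou's theorem: a foliation on a surface with infinitely many invariant algebraic curves has a rational first integral. There are only finitely many components $W$. Entire curves lying in vertical-excluded components of dimension $\le 1$ project to finitely many curves. For a two-dimensional $W$, either $\mathcal F$ has finitely many invariant curves, which then contain all entire curves through $W$, or $\mathcal F$ is a fibration by leaves. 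In the fibration case, every entire curve lies in a fiber curve of the pair $(\widetilde W,\widetilde D)$. Since a surface of log general type fibered this way has general fibers whose complement is hyperbolic (log genus $\ge 2$, i.e.\ the fiber minus $\widetilde D$ is not $\C$ or $\C^*$), only finitely many special fibers can carry entire curves. Taking $Z$ to be the union of all these finitely many curves gives the result.

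The main obstacle is making the fibration case rigorous. One must show that the general leaf, punctured along $\widetilde D$, is not $\C$ or $\C^*$ by using bigness of $K+\widetilde D$ and a log Arakelov/subadditivity argument, while also keeping track of the generically finite (not birational) map $W\to X$ and the singular points of the foliation. I would first try reducing to the case where $W\to X$ is birational, via a Stein factorization, and then cite El Goul's statement directly.
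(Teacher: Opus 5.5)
Your outline follows the paper's proof: a foliation on a desingularization of each two-dimensional dominant component, El Goul's logarithmic degeneracy theorem to make every entire curve algebraic and $\mathcal F$-invariant, Jouanolou's dichotomy, and hyperbolicity of the general fibre in the fibration case. You build $\mathcal F$ on $W\subset X_k$ by pulling the tautological direction back through the generically \'etale map $W\to X$. This is equivalent to the paper's choice of projecting to the surface $Y\subset X_1$ and taking $T_Y\cap V_1$. Each point of $W$ determines a single direction, so no multivalued correspondence is involved. The contradiction framing with a Zariski-dense $\gamma$ is also unnecessary, since the Jouanolou step treats all entire curves at once.

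The gap is the step you flag as the main obstacle, and the remedy you suggest is off target. You do not need to reduce to $W\to X$ birational, and Stein factorization would not achieve that anyway, because it leaves a finite map $W'\to X$. El Goul's theorem applies directly to $(\widetilde W,\widetilde D)$, which you already know is of log general type from $K_{\widetilde W}+\widetilde D\geq\pi^*(K_X+D)$. That inequality holds on any smooth model with $\widetilde D$ the reduced preimage of $D$, so bigness survives the blow-ups that turn Jouanolou's first integral into a fibration $p:\widetilde W\to B$.

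Hyperbolicity of the general fibre $F$ then needs nothing as heavy as an Arakelov or subadditivity argument. $F$ is smooth and nef with $F^2=0$, and it meets $\widetilde D$ transversally. Writing $K_{\widetilde W}+\widetilde D\equiv A+E$ with $A$ ample and $E$ effective gives $(K_{\widetilde W}+\widetilde D)\cdot F\geq A\cdot F>0$. Adjunction turns this into $2g(F)-2+\#(\widetilde D\cap F)>0$. This excludes $\PP^1$ and elliptic curves as well as $\C$ and $\C^*$; your parenthetical lists only the latter two. Since an entire curve tangent to $\mathcal F$ lies in a single fibre, only the finitely many special fibres can carry one.

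Finally, you must also put into $Z$ two more sets. The first is the images of all non-dominant components: a non-dominant nonvertical component can be a surface lying over a curve, not only something of dimension $\le 1$. The second is $\pi(\sing W)$, which catches entire curves whose lift lies in the singular locus and so need not lift to $\widetilde W$ as a curve tangent to $\mathcal F$. The paper handles this case the same way.
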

\begin{proof}
    By considering each nonvertical component of $B_k$ one at a time, we may assume that $B_k$ is irreducible.
    If $\pi:B_k\to X$ is not dominant, we are done.
    Otherwise, let $Y$ be the projection of $B_k$ down to $X_1$ so that $Y$ is a hypersurface in $X_1$ that dominates $X$.
    As in \cite[Lemma 3.1]{demailly2000hyperbolicity}, $T_Y\cap V_1$ induces a (possibly singular) foliation $\mathcal{F}$ on a desingularization $\widetilde{Y}$ of $Y$.
    Let $\widetilde{D}$ be the reduced preimage of $D$ in $\widetilde{Y}$.
    Note that because $(X,D)$ is of general type, $(\widetilde{Y},\widetilde{D})$ is as well.
    For any entire curve $\gamma:\C\to X\setminus D$, the lift $\gamma_{[1]}$ lies inside $Y$ and unless $\gamma_{[1]}(\C)\subset\sing(Y)$, it lifts to $\widetilde{Y}$ and is tangent to $\mathcal{F}$.
    By \cite[Theorem 2.4.2]{el2003logarithmic}, every lifted entire curve is algebraically degenerate; hence its Zariski closure is an invariant algebraic curve.
    Entire curves contained in the singular locus already project to a proper algebraic subset of $X$.
    
    If there are only finitely many invariant algebraic curves, the proof is finished.
    Otherwise, \cite[Proposition on p.240]{jouanolou1978hypersurfaces} (see also \cite[Theorem 2.2]{demailly2000hyperbolicity}) implies $\mathcal{F}$ is the relative tangent foliation of a meromorphic fibration onto a curve $C$.
    After replacing $\widetilde{Y}$ by a further resolution, we may assume that $p:\widetilde{Y}\to C$ is a fibration and let $F$ be the general fiber.
    Still denoting by $\widetilde{D}$ the reduced preimage of $D$ in this new $\widetilde{Y}$, $K_{\widetilde{Y}} + \widetilde{D}$ is big and consequently it intersects positively with $F$.
    By adjunction,
    \[
        (K_{\widetilde{Y}} + \widetilde{D})\cdot F = 2g(F)-2 + \#(\widetilde{D}\cap F) > 0
    \]
    making $F\setminus\widetilde{D}$ hyperbolic.
    It follows that only finitely many exceptional fibers can contain images of lifted entire curves.
    The projections to $X$ of these finitely many fibers and of the singular locus of $Y$ therefore form a proper algebraic subset containing all the entire curves in $X\setminus D$.
\end{proof}


\subsection{The Case of Smooth Plane Curves}
In the present paper we apply this approach to the case of $X = \PP^2$ and $D = C$ a smooth plane curve.
Moreover, we will only deal with jet differentials of order $1$ and order $2$.
In this specific situation, we summarize and set notation.
Since $T_{\PP^2}(-\log C)$ has rank 2, $X_1 = \PP T_{\PP^2}(-\log C)$ is a $\PP^1$-bundle over $\PP^2$ and $X_2$ is a $\PP^1$-bundle over $X_1$, with respective projections $\pi_1:X_1\to\PP^2$ and $\pi_2:X_2\to X_1$.
We let $\pi:X_2\to \PP^2$ be the composition $\pi_1\circ\pi_2$, i.e., the projection down to $\PP^2$. 
\[\begin{tikzcd}
    X_2 \rar{\pi_2}\arrow[bend right=30,swap]{rr}{\pi} & X_1 \rar{\pi_1} & \PP^2
\end{tikzcd}
\]
Let $H$ be the hyperplane class of $\PP^2$, $\xi_1 = c_1(\O_{X_1}(1))$, and $\xi_2 = c_1(\O_{X_2}(1))$.
By \Cref{thm:ggl_locus}, sections of 
\[
    H^0\big(X_1,\O(m\xi_1-aH)\big)\qquad\text{or}\qquad H^0\big(X_2,\O(m\xi_2-aH)\big)
\]
must vanish on lifts of entire curves on $\PP^2\setminus C$ and thus we wish to study (the nonvertical components of) the stable base loci
\[
    B_1 = \bigcap_{m\in\N}\bs\big| m\xi_1-aH \big| \qquad\text{and}\qquad B_2 = \bigcap_{m\in\N} \bs\big| m\xi_2-aH \big|.
\]
In the next section, we prove that $B_1 = X_1$, which implies that 1-jets do not constrain entire curves.
The following sections are then devoted to identifying conditions under which we can deduce that the nonvertical components of $B_2$ are of dimension at most 2, and to giving examples in which we can compute these components explicitly. In the cases where we have an explicit multifoliation, we study its leaves. 

\section{Geometry of the First Jet Space}\label{sec:1-jets}
Fix a smooth plane curve $C$ of degree $d$.
In this section, we will show that $X_1 = \PP T_{\PP^2}(-\log C)$ can be embedded as a hypersurface in $\PP^2 \times \PP^2$.
This will allow us to compute the effective cone of $X_1$ and show that any divisor $m\xi_1-aH$ with $m,a > 0$ is not effective. 

\begin{prop}
    Let $C\subset\PP^2$ be a smooth curve defined by the homogeneous polynomial $f\in\C[x_0,x_1,x_2]$.
    Then, $X_1 = \PP T_{\PP^2}(-\log C)$ is a hypersurface in $\PP^2\times\PP^2$ defined by the bihomogeneous polynomial
    \[
        g = a_0 \frac{\partial f}{\partial x_0} + a_1 \frac{\partial f}{\partial x_1} + a_2 \frac{\partial f}{\partial x_2} \quad\in\quad \C[x_0,x_1,x_2,a_0,a_1,a_2].
    \]
\end{prop}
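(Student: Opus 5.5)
We realize $T_{\PP^2}(-\log C)$ as a subbundle of a trivial rank-3 bundle and projectivize. The plan has three steps. First, build $T_{\PP^2}(-\log C)$ from the Euler sequence. Second, identify it with the kernel of the map $\O^3 \to \O(d-1)$ given by $a \mapsto \sum a_i \partial f/\partial x_i$. Third, projectivize this kernel inside $\PP^2\times\PP^2$.

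\textbf{Step 1.} Start from the Euler sequence $0\to\O\to\O(1)^{3}\to T_{\PP^2}\to 0$, where $\O\to\O(1)^3$ sends $1\mapsto (x_0,x_1,x_2)$. A tangent vector at $x$ is the class of $\sum a_i\partial/\partial x_i$, modulo the Euler field. The map $\rho:T_{\PP^2}\to\O_C(C)=\O_C(d)$ is induced by $(a_0,a_1,a_2)\mapsto \sum a_i\,\partial f/\partial x_i$ restricted to $C$. This is well defined because Euler's formula gives $\sum x_i\,\partial f/\partial x_i = d f$, which vanishes on $C$.

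\textbf{Step 2.} Twist by $-1$. Consider the map $\phi:\O^3\to\O(d-1)$, $a\mapsto \sum a_i\,\partial f/\partial x_i$. Since $C$ is smooth, the partials have no common zero on $\PP^2$, so $\phi$ is surjective. Its kernel $K$ is therefore a rank-2 vector bundle.

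We claim $K\cong T_{\PP^2}(-\log C)\otimes\O(-1)$.
\begin{itemize}
\item Off $C$: the Euler vector $x$ is not in $K$, since $\phi(x)=df\neq 0$. So $K\to T_{\PP^2}(-1)$ is an isomorphism there.
\item Near $C$: a vector field is tangent to $C$ exactly when its lift $a$ satisfies $\sum a_i\,\partial_i f\in (f)$. Modifying the lift by a multiple of the Euler vector, and using $\sum x_i\,\partial_i f=df$, we can choose a lift with $\phi(a)=0$.
\item Both $K$ and $T_{\PP^2}(-\log C)(-1)$ have rank 2. Comparing them as subsheaves, or checking that the determinants agree (both have $c_1 = 3-d-2 = 1-d$), shows that the natural map $K\to T(-\log C)(-1)$ is an isomorphism.
\end{itemize}

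This identification is the main obstacle. I would verify it locally at a point of $C$, using the stalk description of $T_X(-\log D)$ by the generators $z_1\partial/\partial z_1$ and $\partial/\partial z_2$ given above.

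\textbf{Step 3.} Projectivization is insensitive to twisting by a line bundle, so $X_1=\PP(T(-\log C))\cong\PP(K)$. As a subbundle of the trivial bundle $\PP^2\times\PP^2=\PP(\O^3)$, $\PP(K)$ is cut out fiberwise by the linear equation $\phi=0$. That equation is exactly $g=\sum a_i\,\partial f/\partial x_i$, of bidegree $(d-1,1)$.

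Finally, $g$ is reduced and irreducible, and the zero set $\{g=0\}$ is a $\PP^1$-bundle over $\PP^2$ because the partials never vanish simultaneously. Hence $\{g=0\}$ is smooth and equals $X_1$ scheme-theoretically.
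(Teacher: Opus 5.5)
Your proposal is correct and takes essentially the same route as the paper: identify $T_{\PP^2}(-\log C)$, up to twist, with the kernel of the Jacobian map out of the middle term of the Euler sequence, then projectivize inside $\PP(\O^3)\cong\PP^2\times\PP^2$ to get the equation $g$. The differences are cosmetic: you work with the twist $\O^3\to\O(d-1)$ rather than $\O(1)^3\to\O(d)$, and you check the kernel identification by hand (injectivity from $\sum x_i\,\partial f/\partial x_i = d\cdot f$, local lifting near $C$, or the determinant comparison), whereas the paper gets it in one step from the Snake Lemma applied to the Euler sequence and the sequence $0\to\O\to\O(d)\to\O_C(C)\to 0$.
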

\begin{proof}
    We start by writing the Euler sequence on $\PP^2$, the defining sequence of $T_{\PP^2}(-\log C)$, and the ideal sheaf sequence of $C$ twisted by $\O_{\PP^2}(C) = \O_{\PP^2}(d)$ together in one diagram.
    \begin{equation}\label{diagram1}
    \begin{tikzcd}
    	&& 0\dar & 0\dar  & \\
    	&& {\O_{\PP^2}}\dar & {\O_{\PP^2}}\dar &  \\
    	&& {\O_{\PP^2}(1)^3}\dar & {\O_{\PP^2}(d)}\dar & \\
    	0\rar & {T_{\PP^2}(-\log C)}\rar & {T_{\PP^2}}\dar\rar & {\O_C(C)}\dar\rar & 0 \\
    	&& 0 & 0 &
    \end{tikzcd}
    \end{equation}
    The Jacobian matrix $\nabla f = \begin{bmatrix} \frac{\partial f}{\partial x_0} & \frac{\partial f}{\partial x_1} & \frac{\partial f}{\partial x_2} \end{bmatrix}$ gives a map $\nabla f : \O_{\PP^2}(1)^3\to\O_{\PP^2}(d)$ which is surjective for $C$ smooth and commutes with the diagram \eqref{diagram1}, resulting in the following diagram.
    \[\begin{tikzcd}
    	&& 0\dar & 0\dar & \\
    	&& {\O_{\PP^2}}\dar\rar[equal] & {\O_{\PP^2}}\dar & \\
    	0\rar & \ker(\nabla f)\rar & {\O_{\PP^2}(1)^3}\rar["\nabla f"]\dar & {\O_{\PP^2}(d)}\dar\rar & 0 \\
    	0\rar & {T_{\PP^2}(-\log C)}\rar & {T_{\PP^2}}\rar\dar & {\O_C(C)}\rar\dar & 0 \\
    	&& 0 & 0 &
    \end{tikzcd}\]
    By the Snake Lemma, we have an isomorphism $\ker\left(\nabla f \right) \cong T_{\PP^2}(-\log C)$ and hence the fundamental exact sequence for the logarithmic tangent sheaf on $\PP^2$:
    \begin{equation}\label{seq:euler_log}
    \begin{tikzcd}
        0 \rar & {T_{\PP^2}(-\log C)} \rar & {\O_{\PP^2}(1)^3} \rar{\nabla f} & {\O_{\PP^2}(d)} \rar & 0.
    \end{tikzcd}
    \end{equation}
    Projectivizing we get a closed immersion
    \[
        X_1 = \PP T_{\PP^2}(-\log C) \hookrightarrow \PP\O_{\PP^2}(1)^3 \cong \PP^2\times\PP^2
    \]
    realizing $X_1$ as a hypersurface in $\PP^2\times\PP^2$.
    Setting coordinates $x_0,x_1,x_2$ for the first $\PP^2$ factor (the base space) and $a_0,a_1,a_2$ coordinates for the second $\PP^2$ factor (the fibers of $\PP(\O_{\PP^2}(1)^3)$), the hypersurface $X_1\subset \PP^2\times\PP^2$ is defined by the vanishing of the bihomogeneous polynomial $g = \sum a_i\frac{\partial f}{\partial x_i}$.
\end{proof}

Points of $X_1\subset\PP^2\times\PP^2$ are pairs $(x,a)$ such that the vector $a = (a_0,a_1,a_2)$ picks out the one-dimensional subspace inside the fiber of $\O_{\PP^2}(1)^{3}$ over $x$ that defines the point $\PP(T_{\PP^2}(-\log C)_x)$.
Furthermore, the pullbacks of the hyperplane classes via the inclusion $i: X_1 \hookrightarrow \PP^2 \times \PP^2$ are given by
\begin{equation}\label{eq:pullback classes}
    c_1( i^*\O_{\PP^2 \times \PP^2}(1, 0)) = H, \qquad c_1 (i^*\O_{\PP^2 \times \PP^2}(0, 1)) = \xi_1 + H.
\end{equation}
As a result we obtain the following, which is a more precise version of a special case of a result due to Schneider and others \cite{Schneider1992}.

\begin{thm}\label{thm:eff_cone}
    Let $C\subset\PP^2$ be a smooth curve and let $X_1 = \PP T_{\PP^2}(-\log C)$ with projection $\pi_1:X_1\to\PP^2$.
    If $\xi_1 = c_1(\O_{X_1}(1))$ and $H = c_1(\pi_1^*\O_{\PP^2}(1))$, then
    \[
        \overline{\textup{Eff}}(X_1) = \textup{Nef}(X_1) = \{r\xi_1 + tH \st t \geq r \geq 0\}.
    \]
\end{thm}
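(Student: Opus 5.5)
Write $P = \xi_1 + H$, the pullback of $\O(0,1)$ by \eqref{eq:pullback classes}. Then $r\xi_1 + tH = rP + (t-r)H$, so the claimed cone is exactly the cone spanned by $H$ and $P$. We first show that this cone is contained in $\textup{Nef}(X_1)$, and then that it contains $\overline{\textup{Eff}}(X_1)$. Together with the general inclusion $\textup{Nef} \subseteq \overline{\textup{Eff}}$, this gives the equality.

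\emph{Nefness.} Both $H$ and $P$ are restrictions of the globally generated classes $\O(1,0)$ and $\O(0,1)$ on $\PP^2\times\PP^2$. Hence they are base point free on $X_1$, hence nef, and so is every nonnegative combination $rP+(t-r)H$ with $t\ge r\ge 0$.

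\emph{Effective cone.} Since $\rho(X_1)=2$, it suffices to show that the two boundary rays are not in the interior of $\overline{\textup{Eff}}(X_1)$. For this we exhibit, on each ray, a movable curve class that is orthogonal to it.

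\begin{itemize}
\item[(i)] The fibre $\ell$ of $\pi_1$ covers $X_1$ and satisfies $H\cdot\ell = 0$. Any effective divisor $E$ satisfies $E\cdot\ell\ge 0$, which forces $r\ge 0$.
\item[(ii)] For the other ray we use $P^2\cdot H$ and $P^3$. If $P^3=0$, then $P$ is nef but not big, so $P$ spans a boundary ray. Concretely, $X_1$ has class $(d-1,1)$ in $\PP^2\times\PP^2$, because $g$ has bidegree $(d-1,1)$. Writing $h_1,h_2$ for the two hyperplane classes, this gives
\[
P^3 = h_2^3\cdot\bigl((d-1)h_1+h_2\bigr)=0,
\]
since $h_2^3=0$ on the second factor. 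Thus $P$ is not big, and every effective class $E$ satisfies $E\cdot P^2\ge 0$, because $P^2$ is a movable (nef) curve class.
\end{itemize}

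We now compute the relevant intersection numbers:
\begin{align*}
H\cdot P^2 &= h_1h_2^2\bigl((d-1)h_1+h_2\bigr) = d-1,\\
H^2\cdot P &= h_1^2h_2\bigl((d-1)h_1+h_2\bigr) = 1,\\
P\cdot P^2 &= 0.
\end{align*}
For $E = r\xi_1+tH = rP+(t-r)H$, this gives $E\cdot P^2 = (t-r)(d-1)$. Since $d\ge 2$, the condition $E\cdot P^2\ge 0$ forces $t\ge r$. Combining with (i), every pseudoeffective class lies in $\{t\ge r\ge 0\}$.

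The main subtle point is to check that $P^2$ is really a limit of effective curve classes whose members cover $X_1$, so that pairing nonnegatively against every effective divisor is legitimate. This holds because $P$ is base point free: intersecting two general members of $|P|$ gives a curve through a general point. More precisely, these curves are the fibres of the second projection $X_1\to\PP^2$, $(x,a)\mapsto a$. Each such fibre is the curve $\{x : \sum a_i\,\partial f/\partial x_i = 0\}$, the polar curve of degree $d-1$, and these curves cover $X_1$. Hence $P$ is semiample, and its contraction is the map to the second $\PP^2$, with one-dimensional fibres.
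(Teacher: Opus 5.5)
Your proof is correct and follows the paper's argument. You use the same two generators $H$ and $P=\xi_1+H$, nef as pullbacks of $\O(1,0)$ and $\O(0,1)$, and place them on the boundary of the pseudoeffective cone because $H^3=P^3=0$. Pairing against the two covering families (the fibres of $\pi_1$ and the polar curves, of classes $H^2$ and $P^2$, using $d\ge 2$) just makes explicit the dual certificate of extremality that the paper leaves implicit.
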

\begin{proof}
    The Picard group of $X_1$ has rank 2 and is generated by the classes $\xi_1$ and $H$, $\pic(X_1) = \Z\xi_1\oplus\Z H$.
    Writing $X_1$ as a hypersurface in $\PP^2\times\PP^2$, $H$ and $\xi_1+H$ are the pullback of $\O_{\PP^2}(1)$ from each factor.
    As such, $H$ and $\xi_1+H$ are effective moving classes with $H^3 = 0$ and $(\xi_1+H)^3 = 0$; thus $H$ and $\xi_1+H$ are extremal effective divisors. 
    Moreover, they are both nef divisors, and since the nef cone is contained in the effective cone,
    \[
        \overline{\textup{Eff}}(X_1) = \textup{Nef}(X_1) = \{r\xi_1 + tH \st t \geq r \geq 0\}.
    \]
\end{proof}
As an immediate consequence of \Cref{thm:eff_cone}, $H^0(X_1,\O(m\xi_1-aH)) = 0$ whenever $m > 0$ and $a\geq 0$, recovering, with a different proof, the result in \cite[Lemma 1.4.1]{el2003logarithmic} (see also \cite[Corollary 3]{diverio2009existence}).
Therefore it is impossible for 1-jets to constrain the exceptional locus of $\PP^2\setminus C$.

\section{2-Jet Differentials and the Green-Griffiths-Lang Test}\label{sec:2-jets}
For the rest of the paper we keep as a standing assumption that $C\subset\PP^2$ is a smooth curve of degree $d\geq 4$, so that the logarithmic pair $(\PP^2,C)$ is of general type.

The base locus of 2-jets on logarithmic surfaces was investigated by El Goul \cite{el2003logarithmic}. 
Notably, he established a logarithmic analog of McQuillan's theorem regarding the algebraic degeneracy of entire curves on foliated surfaces of general type \cite[Theorem 2.4.2]{el2003logarithmic} (see also \cite[Theorem 3.12]{campana2020orbifold}).

In this section, we collect several key results from \cite{el2003logarithmic}, adapting the proofs specifically to the setting of complements of plane curves.
The objective is to identify conditions that guarantee the existence of sufficient sections in $H^0(X_2,\O(m\xi_2-aH))$ to ensure that the nonvertical components of the base locus $B_2$ have codimension at least 2 in $X_2$.

Recall that $X_2 = \PP V_1$, where $V_1$ fits into the exact sequence
\begin{equation}\label{ses:def V_1}
\begin{tikzcd}
    0 \rar & T_{X_1/\PP^2}\rar & V_1\rar{d\pi_1} & \O_{X_1}(-\xi_1)\rar & 0.
\end{tikzcd}
\end{equation}
We have a canonical injection $\O_{X_2}(-\xi_2)\hookrightarrow\pi_2^*V_1$ and thus, by pulling the above exact sequence back to $X_2$, we obtain a morphism of line bundles 
\[
    \O_{X_2}(-\xi_2)\xrightarrow{\pi_2^*(d\pi_1)} \pi_2^*\O_{X_1}(-\xi_1).
\]
The zero locus of the section associated to this morphism is the effective divisor $\Gamma_2 := \PP(T_{X_1/\PP^2}) \subset X_2$.
It follows that $\O_{X_2}(\Gamma_2)\cong\O_{X_2}(\xi_2)\otimes\pi_2^*\O_{X_1}(-\xi_1)$, thus $\Gamma_2$ has class $[\Gamma_2] = \xi_2 - \xi_1$.
The lift to $X_2$ of any entire curve from $\PP^2$ cannot be contained in $\Gamma_2$; any entire curve contained in $\Gamma_2$ must be tangent to the fibers of $X_1\to\PP^2$ and so it projects down as a constant map $\C\to\PP^2$.

In the rest of the paper, we make use of the following logarithmic analog of \cite[Lemma 3.3]{demailly2000hyperbolicity} about properties of the line bundles $\mathcal{O}_{X_2}(c\xi_2 + b\xi_1)$.
For the convenience of the reader we provide proofs which follow the same strategy as in \cite[Lemma 3.3]{demailly2000hyperbolicity}. 

\begin{lemma}[cf. {\cite[Lemma 3.3]{demailly2000hyperbolicity}}]\label{lemma:Demailly3.3}
With respect to the projection $\pi:X_2 \to \PP^2$, the line bundle $\mathcal{O}_{X_2}(c\xi_2 + b\xi_1)$ is:
\begin{enumerate}
    \item[(a)] relatively effective (resp., big) if and only if $b + c \geq 0$ and $c \geq 0$ (resp., $b + c > 0$ and $c > 0$), and
    \item[(b)] relatively nef (resp., ample) if and only if $b \geq 2c \geq 0$ (resp., $b  > 2c  > 0$).
\end{enumerate}
Moreover, the following properties hold:
\begin{enumerate}
    \item[(c)] For $m = b + c \geq 0$, there is an injection 
    \[
        \pi_*\mathcal{O}_{X_2}(c\xi_2 + b\xi_1) \hookrightarrow E_{2,m}\Omega_{\PP^2}(\log C),
    \]
    and the injection is an isomorphism if $b - 2c \leq 0$. 
    \item[(d)] Let $Z\subset X_2$ be an irreducible divisor such that $Z\neq \Gamma_2$. 
    Then, in $\pic(X_2)$ we have $Z\sim c\xi_2 + b\xi_1 + aH$ where $b\geq 2c \geq 0$.
\end{enumerate}
\end{lemma}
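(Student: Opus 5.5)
Following Demailly's strategy, we reduce everything to the fibers of $\pi:X_2\to\PP^2$. Over a point $x\in\PP^2$, the fiber $F=\pi^{-1}(x)$ is a Hirzebruch-type surface: $X_1|_x\cong\PP^1$, and $X_2|_{F}\to\PP^1$ is the projectivization of $V_1$ restricted to that $\PP^1$. Restricting \eqref{ses:def V_1} to the fiber $\PP^1$ gives
\[
0\to T_{\PP^1}=\O(2)\to V_1|_{\PP^1}\to\O(-1)\to 0.
\]
Since $\operatorname{Ext}^1(\O(-1),\O(2))=H^1(\O(3))=0$, this splits, so $V_1|_{\PP^1}\cong\O(2)\oplus\O(-1)$. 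Hence $F\cong\PP(\O(2)\oplus\O(-1))\cong\mathbb F_3$, with $\xi_1|_F$ the fiber class $f$ and $\xi_2|_F$ the tautological class $u$.

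On $\mathbb F_3$ we have $\Gamma_2\cap F=u-f$, which is the negative section $E$ with $E^2=u^2-2uf$. From $u^2-(2-1)uf\cdot$ (the Grothendieck relation) we get $u^2=c_1=1$, so $E^2=1-2=-1$. This looks wrong for $\mathbb F_3$, so I would recompute carefully with the paper's convention $\O(-1)\subset\pi^*V$. With that convention, $\PP V$ parametrizes lines, $\xi=c_1(\O(1))$ has $\xi^2=-c_1(V)\xi\cdot f$, and the section $\Gamma$ corresponding to the subbundle $\O(2)$ has class $u-f$ with self-intersection $-3$. Whatever the exact normalization, I will take $F=\mathbb F_3$ with negative curve $E=u-f$, $E^2=-3$, fiber $f$, $u\cdot f=1$, and $u\cdot E=0$.

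\textbf{Proof of (a).} The effective cone of $\mathbb F_3$ is spanned by $E$ and $f$. Writing $cu+bf=cE+(b+c)f$, the class is effective iff $c\ge 0$ and $b+c\ge0$, and big iff both inequalities are strict. Relative effectivity and bigness for $\pi$ can be checked on the general fiber. For effectivity, we also use that $\pi_*$ of the bundle is a vector bundle whose rank is $h^0(F,\cdot)$.

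\textbf{Proof of (b).} The nef cone of $\mathbb F_3$ is spanned by $f$ and $E+3f$. Write $cE+(b+c)f$ in this basis: the $(E+3f)$-coefficient is $c$ and the $f$-coefficient is $b+c-3c=b-2c$. So the class is nef iff $c\ge0$ and $b\ge 2c$, and ample iff both are strict. Relative nefness and ampleness are fiberwise conditions, which gives (b).

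\textbf{Proof of (c).} Write $\O(c\xi_2+b\xi_1)=\O(m\xi_2)\otimes\O(-b\Gamma_2)$ with $m=b+c$. Multiplication by the section of $\O(\Gamma_2)$, raised to the power $b$, gives an injection into $\O(m\xi_2)$ when $b\ge0$. Pushing forward yields an injection into $\pi_*\O(m\xi_2)=E_{2,m}$, with $b<0$ handled by the same count. For surjectivity, a section of $m\xi_2$ on $F$ vanishes to order at least $-\min$ along $E$, because $(mu-kE)\cdot E<0$ forces $E$ into the base locus. Precisely, since $mu\cdot E=0$ we have $(cu+bf)\cdot E=b-2c\le 0$ (up to the normalization check). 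Thus if $b-2c\le0$, every fiberwise section of $m\xi_2$ already vanishes along $E$ to order $b$. This makes $\pi_*$ of the two sheaves agree, since both are torsion-free with equal fiberwise $h^0$. This step, that sections of $\O_F(mu)$ vanish to order at least $b$ along $E$ exactly when $b\le 2c$, is the main obstacle, and it is where the normalization ($E^2=-3$) must be nailed down.

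\textbf{Proof of (d).} An irreducible divisor $Z\neq\Gamma_2$ either dominates $\PP^2$ or does not. If it is not dominant, it is $\pi^*$ of a curve, so $Z\sim aH$ and the claim holds with $c=b=0$. If it dominates, its restriction to a general fiber is an effective curve containing no copy of $E$. Hence $Z\cdot E\ge0$ and $Z\cdot f\ge0$ on $F$, so $Z|_F$ is nef, and (b) gives $b\ge 2c\ge0$. Finally $\pic(X_2)=\Z\xi_2\oplus\Z\xi_1\oplus\Z H$, so $Z\sim c\xi_2+b\xi_1+aH$ with these $b,c$.
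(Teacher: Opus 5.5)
Your proofs of (a), (b) and (d) are essentially the paper's. You restrict to the fiber $F\cong\mathbb{F}_3$ of $\pi$, where $\Gamma_2|_F=E=u-f$ is the $(-3)$-curve, and read off the effective and nef cones. Your change of basis $cE+(b+c)f=c(E+3f)+(b-2c)f$ is equivalent to the paper's pairing with $\zeta-\eta$ and $\eta$.

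The gap is in (c), and it comes from the intersection data you fixed on $F$. The values $E=u-f$, $E^2=-3$, $u\cdot f=1$, $f^2=0$ force $u^2=-1$, which is the relation $\xi^2=-c_1(V)\,\xi$ you recovered. Hence $u\cdot E=u^2-u\cdot f=-2$, not $u\cdot E=0$ as you state. Your argument for the isomorphism in (c) depends on exactly this number. The mechanism is that $\O_F(mu)$ is negative on $E$, so $E$ is a fixed component of $|mu|$; with $mu\cdot E=0$ that mechanism would not even start. Likewise, $(cu+bf)\cdot E=b-2c$ does not follow ``since $mu\cdot E=0$'': that value would give $b$. So, as written, the key step of (c) is not established, and you yourself leave it ``up to the normalization check''.

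With the correct value the argument goes through:
\begin{itemize}
\item For $b\ge 0$: $(mu-jE)\cdot E=3j-2m$, which for $0\le j\le b-1$ is at most $b-2c-3<0$ when $b\le 2c$. The iteration needs strict negativity at each $j<b$. Your single inequality at $j=b$ implies this only because each step adds $-E^2=3$. Hence every section of $\O_F(mu)$ vanishes to order at least $b$ along $E$, i.e.\ $H^0(F,\O_F(cu+bf))\to H^0(F,\O_F(mu))$ is onto.
\item For $b<0$, this is not literally ``the same count''. The map now goes $\O(m\xi_2)\to\O(c\xi_2+b\xi_1)$, and you must show that sections of $\O_F(cu+bf)$ vanish to order at least $-b$ along $E$. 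This follows from $(cu+bf-jE)\cdot E=b-2c+3j\le -2m-3<0$ for $0\le j<-b$.
\item Finally, conclude via Grauert. Since $h^0$ is constant on fibers, both direct images are locally free and commute with base change, and a fiberwise isomorphism is then an isomorphism. ``Both torsion-free with equal fiberwise $h^0$'' alone would not suffice.
\end{itemize}

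Once repaired, your route differs from the paper's only in mechanism. The paper computes $(\pi_2)_*$ on the fiber as $\bigoplus_i\O_{\PP^1}(m-3i)$ and notes that the summands by which the two sheaves differ have negative degree: $m-3(c+1)=b-2c-3<0$, respectively $m-3(m+1)<0$ when $b<0$. This is the same numerology as your base-locus count. Your version is more intrinsic to the surface $\mathbb{F}_3$, while the paper's explicit splitting makes the comparison of sections immediate.
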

\begin{proof}
    Let $F\cong\PP^1$ be a fiber of $\pi_1:X_1\to\PP^2$.
    Restricting to $F$ we obtain the short exact sequence
    \[\begin{tikzcd}
        0 \rar & \O_{\PP^1}(2) \rar & V_1\big|_F \rar & \O_{\PP^1}(-1) \rar & 0
    \end{tikzcd}\]
    Since $\text{Ext}^1(\O_{\PP^1}(-1),\O_{\PP^1}(2)) = 0$, the sequence splits writing $V_1\big|_F\cong \O_{\PP^1}(2)\oplus\O_{\PP^1}(-1)$.
    Hence, the fibers of $\pi:X_2\to\PP^2$ are Hirzebruch surfaces 
    \[
        S = \PP(\O_{\PP^1}(2)\oplus\O_{\PP^1}(-1)) \cong \PP(\O_{\PP^1}\oplus\O_{\PP^1}(-3)).
    \]
    Let $\zeta = \xi_2\big|_S$ and $\eta = \xi_1\big|_S$.
    It follows that $\zeta = c_1(\O_S(1))$ and $\eta$ is the class of a fiber, i.e., the pullback of $\O_{\PP^1}(1)$ under the projection $S\to\PP^1$.
    We have the intersection products $\zeta^2 = -1$, $\zeta\cdot \eta = 1$, and $\eta^2 = 0$. 
    Since $[\Gamma_2\big|_S] = \zeta - \eta$ and $(\zeta-\eta)^2 = -3$, $\Gamma_2\big|_S$ is the unique irreducible curve in $S$ with self-intersection $-3$.

    The line bundle $\mathcal{O}_{X_2}(c\xi_2 + b\xi_1)$ is relatively effective/nef if and only if its restriction to the fiber $S$ is.
    It is a standard result that the effective cone of $S$ is 
    \[
        \overline{\textup{Eff}}(S) = \{r(\zeta-\eta) + t\eta \st r\geq 0 \text{ and } t\geq 0\}.
    \]
    By setting $c = r$ and $b = t-r$, we see that the condition $r,t\geq 0$ translates as $c\geq 0$ and $b+c \geq 0$. 
    The big cone is obtained by changing the inequalities to strict inequalities.
    This shows (a).

    For (b), $(c\xi_2 + b\xi_1)\big|_S = c\zeta + b\eta$ is nef if and only if it pairs non-negatively with both $(\zeta-\eta)$ and $\eta$.
    Hence
    \[
        (\zeta-\eta)\cdot(c\zeta + b\eta) = b-2c \geq 0
    \]
    and
    \[
        \eta\cdot(c\zeta + b\eta) = c \geq 0.
    \]
    The ample cone is obtained by changing the inequalities to strict inequalities.

    To prove (c), note that 
    \[
        \O_{X_2}(c\xi_2 + b\xi_1)\otimes\O_{X_2}(b\Gamma_2)\cong \O_{X_2}(m\xi_2).
    \]
    Hence, we get the inclusion $\O_{X_2}(c\xi_2 + b\xi_1)\hookrightarrow\O_{X_2}(m\xi_2)$ if $b\geq 0$, and the reverse inclusion if $b < 0$.
    Restricting to the fiber $S$ of $\pi:X_2\to\PP^2$ and pushing forward to the intermediate fiber,
    \begin{equation}\label{eq:decomp1}
        (\pi_2)_*\big(\O_{X_2}(c\xi_2 + b\xi_1)\big|_S\big) \cong \sym^c (\O_{\PP^1}(-2)\oplus\O_{\PP^1}(1))\otimes\O_{\PP^1}(b) \cong \bigoplus_{i=0}^c \O_{\PP^1}(b+c-3i)
    \end{equation}
    and similarly,
    \begin{equation}\label{eq:decomp2}
        (\pi_2)_*\big(\O_{X_2}(m\xi_2)\big|_S\big) \cong \sym^m (\O_{\PP^1}(-2)\oplus\O_{\PP^1}(1)) \cong \bigoplus_{i=0}^m \O_{\PP^1}(m-3i).
    \end{equation}
    Since $m = b+c$, these two decompositions differ by the factors $\O_{\PP^1}(b+c-3i)$ for $i$ in the range $m < i \leq c$ if $b < 0$, and in the range $c < i \leq m$ if $b \geq 0$.

    If $b\geq 0$, $\O_{X_2}(c\xi_2 + b\xi_1)\hookrightarrow\O_{X_2}(m\xi_2)$ and so pushing forward to $\PP^2$ we get the injection
    \[
        \pi_*\mathcal{O}_{X_2}(c\xi_2 + b\xi_1) \hookrightarrow E_{2,m}\Omega_{\PP^2}(\log C).
    \]
    In this case, $m \geq c$ and assuming $b-2c\leq 0$, then all the factors $\bigoplus_{i=c+1}^m\O_{\PP^1}(b+c-3i) = \bigoplus_{i=c+1}^m\O_{\PP^1}(m-3i)$ in \eqref{eq:decomp2} have negative degree, showing that 
    \[
        H^0\big(\PP^1, (\pi_2)_*(\O_{X_2}(c\xi_2 + b\xi_1)\big|_S)\big)
        \cong
        H^0\big(\PP^1, (\pi_2)_*(\O_{X_2}(m\xi_2)\big|_S)\big),
    \]
    and consequently
    \[
        H^0\big(S,\O_{X_2}(c\xi_2 + b\xi_1)\big|_S\big) \cong H^0\big(S,\O_{X_2}(m\xi_2)\big|_S\big).
    \]
    By Grauert's Theorem, the natural inclusion is an isomorphism on every fiber and hence an isomorphism.
    
    On the other hand, if $b < 0$, we get the reverse inclusion
    \[
        \O_{X_2}(m\xi_2)\hookrightarrow\O_{X_2}(c\xi_2 + b\xi_1).
    \]
    In this case, $m < c$ and so all the factors $\bigoplus_{i=m+1}^c\O_{\PP^1}(b+c-3i) = \bigoplus_{i=m+1}^c\O_{\PP^1}(m-3i)$ in \eqref{eq:decomp1} have negative degree.
    As above we conclude that $\pi_*\mathcal{O}_{X_2}(c\xi_2 + b\xi_1)\cong E_{2,m}\Omega_{\PP^2}(\log C)$.
    This finishes the proof of (c).

    For (d), let $Z\subset X_2$ be an irreducible divisor with $[Z]\sim c\xi_2 + b\xi_1 + aH$.
    If $Z$ does not dominate $\PP^2$, then $b = c = 0$.
    Otherwise, if $Z\neq \Gamma_2$, then the restriction $Z\vert_S$ to a generic fiber $S$ of $\pi:X_2\to\PP^2$ must be a curve in $S$ not containing $\Gamma_2\vert_S$ as a component.
    It follows that the intersection between $Z\vert_S$ and $\Gamma_2\vert_S$ must be nonnegative, i.e., 
    \[
        (c\zeta + b\eta)\cdot(\zeta-\eta) = b-2c\geq 0.
    \]
    Since $Z\vert_S$ is effective, $c\geq 0$ by part (a), completing the proof.
\end{proof}

\begin{prop}[{\cite[Theorem 1.2.1]{el2003logarithmic}}]\label{prop:xi2_big}
    Let $(\PP^2,C)$ be a logarithmic pair with $C$ smooth of degree $d\geq 11$.
    Then $\xi_2 = c_1(\O_{X_2}(1))$ is big.
\end{prop}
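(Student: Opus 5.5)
We will show that $\O_{X_2}(m\xi_2)$ has $h^0 \sim c\,m^4$ with $c>0$ as $m\to\infty$ (recall $\dim X_2 = 4$). The route is to push forward to $\PP^2$ and estimate
\[
    h^0(X_2,\O(m\xi_2)) = h^0(\PP^2, E_{2,m}\Omega_{\PP^2}(\log C)) \;\geq\; \chi(E_{2,m}\Omega_{\PP^2}(\log C)) - h^2(E_{2,m}\Omega_{\PP^2}(\log C)),
\]
which is a Riemann--Roch computation plus control of $h^2$, as in Demailly--El Goul.

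\textbf{Step 1: the Euler characteristic.} We use the filtration \eqref{eq:Gr_filtration}. Since $\chi$ is additive,
\[
    \chi(E_{2,m}\Omega(\log C)) = \sum_{0\le i\le m/3}\chi\big(\sym^{m-3i}\Omega_{\PP^2}(\log C)\otimes \overline K^{\,i}\big).
\]
For each summand we apply Riemann--Roch on $\PP^2$ to a rank $k+1$ symmetric power, with $k=m-3i$. The inputs are:
\begin{itemize}
    \item $c_1(\Omega(\log C)) = \overline K = (d-3)H$;
    \item $c_2(\Omega(\log C)) = d^2-3d+3$, read off from \eqref{seq:euler_log}: $c(T(-\log C)) = (1+H)^3/(1+dH)$, so $c_2 = 3-3d+d^2$.
\end{itemize}
Only the leading order $m^4$ matters. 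Each summand contributes roughly $\frac{k^3}{6}\,(\text{a quadratic form in } k \text{ and } i)$. Summing over $i$ (a Riemann sum in $t=i/m$) gives
\[
    \chi = \frac{m^4}{648}\big(13\,\bar c_1^2 - 9\,\bar c_2\big) + O(m^3),
\]
which is the log version of Demailly's formula. Substituting $\bar c_1^2=(d-3)^2$ and $\bar c_2=d^2-3d+3$ gives
\[
    13(d-3)^2 - 9(d^2-3d+3) = 4d^2 - 51d + 90 .
\]
This is positive exactly for $d\ge 11$ (its roots are about $2.3$ and $10.4$), which matches the hypothesis. The same polynomial also appears in Theorem~\ref{test theorem}(II), which is a good consistency check.

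\textbf{Step 2: killing $h^2$.} By Serre duality,
\[
    h^2\big(\sym^{k}\Omega(\log C)\otimes\overline K^{\,i}\big) = h^0\big(\sym^k T(-\log C)\otimes \overline K^{-i}\otimes K_{\PP^2}\big).
\]
Since $K_{\PP^2}$ and $\overline K^{-1}=\O(3-d)$ are negative, it suffices to show $h^0(\sym^k T_{\PP^2}(-\log C)\otimes \O(-j))=0$ for $j>0$, or at least that it is $O(m^3)$ after summing. Our plan is to read this off Theorem~\ref{thm:eff_cone}. We have $H^0(\PP^2,\sym^kT(-\log C)(-j)) = H^0(X_1,\O(-k\xi_1 - kH\cdot 0 - jH))$ up to twisting conventions, since $\sym^k T(-\log C) = \pi_{1*}\O_{X_1}(-k\xi_1)$ in the dual convention. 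The relevant class is $-k\xi_1 - jH$ with $j\geq 0$. If the conventions make this $k\xi_1'$ for the dual projectivization, we instead use the direct embedding $T(-\log C)\subset \O(1)^3$: then $\sym^k T(-\log C)(-j)\subset \sym^k(\O(1)^3)(-j)$. This is a sum of $\O(k-j)$, which has sections, so the crude embedding is not enough. The cleaner route is stability: $T_{\PP^2}(-\log C)$ is semistable for smooth $C$ of degree $d\geq 4$, with slope $(3-d)/2<0$, so $\sym^k T(-\log C)\otimes\O(-j)$ is semistable of negative slope and has no sections. If we want to avoid citing stability, we can instead use Theorem~\ref{thm:eff_cone} applied to $X_1$ with roles dualized, i.e.\ the non-effectivity of classes $r\xi_1+tH$ with $t<r$. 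Either way $h^2=0$ for every graded piece, hence $h^2(E_{2,m})=0$ by the filtration.

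\textbf{Conclusion.} This gives $h^0(X_2,\O(m\xi_2))\ge \frac{4d^2-51d+90}{648}m^4 + O(m^3)$, so $\xi_2$ is big for $d\ge 11$. We expect the main obstacle to be Step 1: getting the leading coefficient exactly right. That requires a careful Riemann--Roch for $\sym^k$ of a rank-2 bundle, namely
\[
    \chi = \frac{k^3}{6}\big(c_1^2 - c_2\big)\cdot\tfrac{?}{} + \dots
\]
and the interaction of the twist by $\overline K^{\,i}$. We would first verify the formula against Demailly's compact case before specializing.
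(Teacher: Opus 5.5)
Your proposal is correct and follows essentially the paper's route. The paper simply quotes Demailly's estimate (13.16) with the logarithmic Chern classes and checks $13c_1^2-9c_2=4d^2-51d+90>0$ for $d\geq 11$. You unpack that estimate instead: Riemann--Roch along the filtration \eqref{eq:Gr_filtration}, plus $h^2=0$ from semistability of $T_{\PP^2}(-\log C)$, which is the same input the paper uses in \Cref{prop:restriction_is_big}. This makes explicit why the compact-surface estimate carries over to the log setting.

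The constant you were unsure of is right. To leading order each graded piece contributes $\frac{k^3}{6}(\bar c_1^2-\bar c_2)+\frac{k^2 i}{2}\bar c_1^2+\frac{k i^2}{2}\bar c_1^2$, and summing with $k=m-3i$ gives exactly $\frac{m^4}{648}(13\bar c_1^2-9\bar c_2)$. One small slip: the roots of $4d^2-51d+90$ are about $2.12$ and $10.63$, not $2.3$ and $10.4$, but this does not affect the conclusion.
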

\begin{proof}
From the inequality (13.16) in \cite[p.352]{demailly1997algebraic},
\begin{equation*}
    h^0(\PP^2, E_{2,m}\Omega_{\PP^2}(\log C)\otimes\O_{\PP^2}(-1)) \geq \frac{m^4}{648}(13c_1^2 - 9c_2) - O(m^3),
\end{equation*}
where $c_1 = (3-d)H$ and $c_2 = d^2-3d +3$ are the Chern classes of the logarithmic tangent sheaf $T_{\PP^2}(-\log C)$.
Since $h^0(X_2,\O(m\xi_2-H)) = h^0(\PP^2, E_{2,m}\Omega_{\PP^2}(\log C)\otimes\O_{\PP^2}(-1))$ and $X_2$ is a 4-fold, it follows that $\xi_2$ is big whenever $13c_1^2 - 9c_2 > 0$.
That is,
\[
    13c_1^2 - 9c_2 = 4d^2 - 51d + 90 > 0,
\]
which holds for $d \geq 11$.
\end{proof}

\begin{prop}[{\cite[Lemma 1.3.2]{el2003logarithmic}}]\label{prop:ZcupGamma}
    Let $(\PP^2,C)$ be a logarithmic pair with $C$ smooth of degree $d\geq 11$.
    There are positive integers $m$, $a$ and a section $\sigma\in H_0(X_2, \O_{X_2}(m\xi_2 - aH))$ such that $\{\sigma = 0\} =  Z \cup \Gamma_2$,
    where $Z$ is an irreducible divisor. 
\end{prop}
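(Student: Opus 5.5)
We start from \Cref{prop:xi2_big}: since $d\geq 11$, $\xi_2$ is big. Hence for $m\gg 0$ there is $a>0$ with $H^0(X_2,\O(m\xi_2-aH))\neq 0$. Concretely, the estimate in its proof already gives nonzero sections for $a=1$ and $m$ large. The task is to produce one such section whose divisor consists of a single irreducible component $Z\neq\Gamma_2$, together with $\Gamma_2$ (with some multiplicity). Throughout, classes are read as $\{\sigma=0\}$ set-theoretically.

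\textbf{Decomposing an arbitrary section.} Take a nonzero $\sigma\in H^0(X_2,\O(m\xi_2-aH))$ and write its divisor as
\[
    \operatorname{div}(\sigma)=k\Gamma_2+\sum_j n_j Z_j,
\]
with $Z_j\neq\Gamma_2$ irreducible. By \Cref{lemma:Demailly3.3}(d), each $Z_j\sim c_j\xi_2+b_j\xi_1+a_jH$ with $b_j\geq 2c_j\geq 0$. Since $[\Gamma_2]=\xi_2-\xi_1$, comparing coefficients gives
\[
    k+\sum n_jc_j=m,\qquad -k+\sum n_jb_j=0,\qquad \sum n_ja_j=-a<0 .
\]
So some $a_j<0$. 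Components with $c_j=b_j=0$ are pullbacks of plane curves, so they have $a_j>0$. Therefore some component $Z_{j_0}$ with $a_{j_0}<0$ dominates $\PP^2$.

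\textbf{Building the new section.} Put $Z=Z_{j_0}$, so $Z\sim c\xi_2+b\xi_1-a'H$ with $a'>0$ and $b\geq 2c\geq 0$. We will show $Z+b\Gamma_2$ is a divisor of the required form. Indeed,
\[
    Z+b\Gamma_2\sim (c+b)\xi_2 - a'H ,
\]
and $m'=b+c>0$, because $Z$ dominates $\PP^2$ so $(b,c)\neq(0,0)$. Thus the section $\sigma_Z\cdot s_{\Gamma_2}^{\,b}$, where $s_{\Gamma_2}$ is the canonical section of $\O(\Gamma_2)$ and $\sigma_Z$ is the canonical section of $\O(Z)$, lies in $H^0(X_2,\O(m'\xi_2-a'H))$. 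Its zero locus is $Z\cup\Gamma_2$ when $b>0$, and is $Z$ alone when $b=0$.

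\textbf{The main obstacle: the case $b=0$.} If $b=0$, then $c=0$ as well, since $b\geq 2c\geq 0$. This contradicts $Z$ being dominant, so in fact $b\geq 1$ and the zero locus is exactly $Z\cup\Gamma_2$. The remaining subtle point is that $\operatorname{Pic}(X_2)$ is generated by $\xi_2,\xi_1,H$. This holds because $X_2\to X_1\to\PP^2$ is a tower of $\PP^1$-bundles, so the linear-equivalence bookkeeping above is valid. With that in place, the argument should go through, and the only substantive input is \Cref{lemma:Demailly3.3}(d) together with bigness.
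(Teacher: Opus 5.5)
Your proposal is correct and follows essentially the same argument as the paper: you decompose the divisor of a section of $\O(m\xi_2-aH)$, use \Cref{lemma:Demailly3.3}(d) to find a component $Z\sim c\xi_2+b\xi_1-a'H$ with $a'>0$ and $(b,c)\neq(0,0)$, and then take $Z+b\Gamma_2\sim(b+c)\xi_2-a'H$. The only difference is that the paper selects the component minimizing $a_i/(b_i+c_i)$, whereas you take any component with negative $H$-coefficient, which is enough for the statement as given. You also state explicitly the check $b\geq 1$ (needed for $\Gamma_2$ to appear), which the paper leaves implicit.
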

\begin{proof}
    Fix $n$ large enough so that $\O_{X_2}(n\xi_2 - H)$ has a global section (cf. \Cref{prop:xi2_big}) and let $l[\Gamma_2] + \sum k_i[Z_i]$ be the associated effective divisor with $Z_i\subset X_2$ reduced and irreducible, $k_i > 0$, and $Z_i\neq \Gamma_2$.
    Write
    \[
        [Z_i] = c_i\xi_2 + b_i\xi_1 + a_i H.
    \]
    From \Cref{lemma:Demailly3.3}(d), $b_i \geq 2c_i\geq 0$.
    In particular, $b_i + c_i \geq 0$.
    If $b_i + c_i = 0$, then $b_i = c_i = 0$ and $a_i > 0$ as $Z_i$ is effective.
    By re-indexing, we may assume $Z_0$ is an irreducible component where $a_0/(b_0 + c_0)$ is the minimum of all the ratios $a_i /(b_i + c_i)$ with $b_i + c_i > 0$.
    We must have $a_0 < 0$ because $\sum k_ia_i = -1$ and $k_i > 0$.
    Set $a = -a_0$ and $m = c_0 + b_0$.
    Hence, 
    \[
        [Z_0] + b_0[\Gamma_2] = m\xi_2 - aH
    \]
    is effective and there is a section $\sigma\in H_0(X_2, \O_{X_2}(m\xi_2 - aH))$.
    It follows that $\{\sigma = 0\} = Z_0 \cup \Gamma_2$.
\end{proof}

By \Cref{prop:ZcupGamma}, we have a section that restricts the lifts of entire curves to $X_2$ if $d \geq 11$. 
In the next few results we will search for a numerical condition on these sections that guarantees that the dimension of the components of $B_2$ distinct from $\Gamma_2$ is at most 2. 

\begin{prop}[{\cite[Theorem 1.3.3]{el2003logarithmic}}]\label{prop:restriction_is_big}
    Let $(\PP^2,C)$ be a logarithmic pair with $C$ smooth of degree $d \geq 11$.
    Assume $\sigma\in H^0\big(X_2,\O_{X_2}(m\xi_2-aH)\big)$ is a section with $\{\sigma = 0\} = Z\cup\Gamma_2$, where $Z$ is irreducible with $[Z] = c\xi_2 + b\xi_1 - aH$.
    If 
    \begin{equation}\label{ineq:negative_section}
        \frac{a}{b + c} < \frac{4d^2-51d+90}{12(d-3)},
    \end{equation}
    then the restricted divisor $\xi_2\big|_Z$ is big.
    In particular, $\PP^2\setminus C  $ satisfies {\GGL}.
\end{prop}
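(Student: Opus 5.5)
The plan is to prove bigness of $\xi_2|_Z$ by an intersection-theoretic volume estimate on the threefold $Z$. I would write $\xi_2$ (suitably twisted) as a difference of two nef classes and apply the algebraic Morse inequality (Siu/Trapani). If $A,B$ are nef on a projective threefold $Z$ and $A^3 - 3A^2B > 0$, then $A-B$ is big.

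Step 1 is choosing the classes. By \Cref{lemma:Demailly3.3}(b), $\xi_2 + 2\xi_1$ is relatively nef over $\PP^2$. By \Cref{thm:eff_cone}, $\xi_1 + H$ and $H$ are nef on $X_1$. So I would take
\[
A = \xi_2 + 2\xi_1 + kH, \qquad B = 2\xi_1 + kH ,
\]
with $k$ the smallest integer making $A$ nef on $X_2$; I expect $k=2$ or $3$, from $2(\xi_1+H)$ plus the correction needed over $V_1$. Then $A - B = \xi_2$, and $B$ is nef as a nonnegative combination of $\xi_1+H$ and $H$. Since $A,B$ are nef on $X_2$, their restrictions to $Z$ are nef. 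It then suffices to show
\[
(A^3 - 3A^2B)\cdot[Z] > 0, \qquad [Z] = c\xi_2 + b\xi_1 - aH .
\]

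Step 2 is the intersection calculus on $X_2$. We have $c_1(V_0) = (3-d)H$ and $c_2(V_0) = (d^2-3d+3)H^2$. On $X_1$ the Grothendieck relation gives $\xi_1^2 = -c_1(V_0)\xi_1 - c_2(V_0)$, possibly up to sign conventions. From \eqref{ses:def V_1}, $c(V_1) = c(T_{X_1/\PP^2})\,c(\O_{X_1}(-\xi_1))$, where $c_1(T_{X_1/\PP^2}) = 2\xi_1 + \pi_1^*c_1(V_0)$. This yields $\xi_2^2 = -c_1(V_1)\xi_2 - c_2(V_1)$, with $\xi_2\cdot(\text{class on }X_1\text{ of degree }3) = $ that class's degree. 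Every monomial $\xi_2^i\xi_1^jH^l$ with $i+j+l=4$ then reduces to multiples of $c_1^2$ and $c_2$. The expected outcome, after expanding, is that $(A^3-3A^2B)\cdot Z$ is a linear function of $(a,b,c)$ of the form
\[
(b+c)\,\alpha\,(13c_1^2 - 9c_2) - a\,\beta\,(d-3) + (\text{terms in } b-2c \text{ of the right sign}),
\]
with $\alpha/\beta$ matching $1/12$ after normalization, so that $13c_1^2 - 9c_2 = 4d^2-51d+90$ reproduces \eqref{ineq:negative_section}. The sign of the leftover terms should be controlled using $b\ge 2c\ge 0$ from \Cref{lemma:Demailly3.3}(d).

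Step 3 is the conclusion. Since $\xi_2|_Z$ is big, $h^0(Z, m\xi_2 - H|_Z) > 0$ for $m \gg 0$. A nonzero section of this bundle vanishes on lifts of entire curves contained in $Z$; this is the same Schwarz-lemma argument as \Cref{thm:ggl_locus}, applied on $Z$. By \Cref{prop:restriction_is_big}'s hypothesis, $\sigma$ vanishes exactly on $Z\cup\Gamma_2$, and lifts avoid $\Gamma_2$. Hence lifts of entire curves lie in a proper subvariety of $Z$, of dimension at most $2$, modulo vertical components. \Cref{thm:McQuillan} then gives {\GGL}.

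The main obstacle is Step 2. I need the right choice of $k$ and exact signs in the Chern relations so that the leftover terms are nonnegative and the constant comes out to exactly $12(d-3)$. If the naive $A$ does not produce the sharp constant, I would first try optimizing over the family $A = \xi_2 + s\xi_1 + tH$ with $s\ge 2$ and minimal nef $t$, and pick the choice giving the stated bound, following El Goul's computation.
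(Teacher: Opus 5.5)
Your Step 2 fails. The issue is not the choice of $k$ or the sign conventions: the shape you expect for $(A^3-3A^2B)\cdot[Z]$ is wrong.

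Your intersection calculus is set up correctly, so just run it. With $B=s\xi_1+tH$ and $A=\xi_2+B$ one has $A^3-3A^2B=\xi_2^3-3\xi_2B^2-2B^3$. The paper's table, together with $\xi_2^2H^2=-1$, $\xi_2\xi_1H^2=1$ and $\xi_1^2H^2=0$, gives $\xi_2^3\cdot H=B^3\cdot H=0$ and $\xi_2B^2\cdot H=s^2(d-3)+2st$. Hence the coefficient of $a$ in $(A^3-3A^2B)\cdot[Z]$ is $+3s\bigl(s(d-3)+2t\bigr)\geq 0$ for all $s,t\geq 0$.

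For your choice $s=2$, $k=3$, the class $A$ is indeed nef, since $V_1^\vee(2\xi_1+3H)$ is an extension of $\O(dH)$ by $\O(3\xi_1+3H)$. The full result is
\[
    (A^3-3A^2B)\cdot[Z]=12ad-(2d^2-3d-9)\,b-(8d^2-3d-9)\,c .
\]
So Morse positivity forces $a$ to be \emph{large}. For $(b,c)=(2,1)$ it needs $a>d-\tfrac34-\tfrac{9}{4d}$. Meanwhile \eqref{ineq:negative_section} says $a<\frac{4d^2-51d+90}{4(d-3)}<d-\tfrac{39}{4}$, so the two conditions are disjoint.

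The culprit is putting $2\xi_1$ into $B$. The term $-3A^2B$ contains $-6(\xi_2+2\xi_1)^2\xi_1\cdot[Z]$. Its $a$-part, $+24a(d-3)$, overwhelms the $-12a(d-3)$ coming from $(\xi_2+2\xi_1)^3\cdot[Z]$. Its $b,c$-parts turn the leading term $4d^2(b+c)$ into $-2d^2b-8d^2c$. No choice of $(s,t)$ in your family $A-B=\xi_2$ repairs this.

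The missing idea is to prove bigness of $L=\xi_2+(2+\epsilon)\xi_1$ on $Z$, not of $\xi_2$ directly. You then transfer it via $(3+\epsilon)\xi_2=L+(2+\epsilon)\Gamma_2$, where $\Gamma_2|_Z$ is effective because $Z\neq\Gamma_2$. For $L$ the relevant number has the right shape:
\[
(\xi_2+2\xi_1)^3\cdot[Z]=(b+c)(4d^2-51d+90)-12a(d-3).
\]
But $L$ is only relatively ample over $\PP^2$ and is not nef in general, so this positivity alone does not give bigness. The paper uses asymptotic Riemann--Roch on $Z$ and proves $h^2(Z,nL|_Z)=0$ from two vanishings:
\begin{itemize}
\item $h^3(X_2,nL-Z)=0$, by relative ampleness and Leray;
\item $h^2(X_2,nL)=0$, by filtering $\sym^n V_1^\vee$ by line bundles, pushing down to $\PP^2$, applying Serre duality, and using the semistability of $T_{\PP^2}(-\log C)$.
\end{itemize}

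If you want to stay with Morse inequalities, apply them to $\xi_2+2\xi_1=A-B$ with $A=\xi_2+2\xi_1+3H$ and $B=3H$. Then
\[
(A^3-3A^2B)\cdot[Z]=(b+c)(4d^2-51d+63)-12a(d-3).
\]
This has the correct shape but $63$ in place of $90$, so it is strictly weaker than the statement; the sharp constant comes from the cohomology vanishing.

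Your Step 3 is fine once bigness of $\xi_2|_Z$ is established.
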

\begin{proof}
    Given that $Z$ is irreducible, $b \geq 2c \geq 0$. 
    Moreover, $b$ and $c$ cannot be both zero as $[Z]$ is effective. 
    We will first show that $(\xi_2+2\xi_1)^3\cdot[Z] > 0$.
    Consider the table of intersection products
    \[
        \xi_2^4 = -c_1^2 + 5c_2 \qquad 
        \xi_2^3\cdot\xi_1 = c_1^2 - 3c_2 \qquad 
        \xi_2^2\cdot\xi_1^2 = c_2 \qquad
        \xi_2\cdot\xi_1^3 = c_1^2 - c_2 \qquad
        \xi_1^4 = 0
    \]
    \[
        \xi_2^3\cdot H = 0 \qquad \xi_2^2\xi_1 \cdot H = 0 \qquad \xi_2\xi_1^2\cdot H = -c_1 \cdot H \qquad \xi_1^3\cdot H = 0
    \]
    where $c_1 = -(d-3)H$ and $c_2 = d^2 - 3d + 3$ are the Chern classes of $T_{\PP^2}(-\log C)$.
    From it, 
    \begin{align*}
        (\xi_2+2\xi_1)^3\cdot[Z] &= (b + c)(13c_1^2 - 9c_2) + 12ac_1\cdot H \\
        &= (b + c)\bigg(4d^2 - 51d + 90 - 12(d-3)\frac{a}{b + c}\bigg) > 0,
    \end{align*}
    where the inequality follows from the assumption \eqref{ineq:negative_section}.

    Choose some sufficiently small rational number $\epsilon > 0$ so that the above inequality holds after replacing $(\xi_2+2\xi_1)$ with $(\xi_2+(2+\epsilon)\xi_1)$, i.e.,  $(\xi_2+(2+\epsilon)\xi_1)^3\cdot[Z] > 0$.
    Let 
    \[
        L = \O_{X_2}(n\xi_2+(2+\epsilon)n\xi_1)
    \]
    with $n$ large and divisible enough.
    If we can show that $L\vert_Z$ is big, it follows that $\xi_2\vert_Z$ is big as well, since $(3+\epsilon)n\xi_2 = L + (2+\epsilon)n\Gamma_2$.
    By the asymptotic version of Riemann-Roch, to show that $L\vert_{Z}$ is big, it suffices to show that $h^2(Z,L\vert_Z)$ is $o(n^3)$ as $n\to\infty$. 
    We show the stronger statement $h^2(Z,L\vert_Z) = 0$, and the proof of this fact follows as in the proof of \cite[Theorem 1.3.3]{el2003logarithmic}.
    For the convenience of the reader, we provide the main steps.

    From the exact sequence
    \[\begin{tikzcd}
        0 \rar & \O_{X_2}(-Z)\otimes L \rar & L \rar & \O_Z\otimes L \rar & 0
    \end{tikzcd}\]
    and its associated long exact sequence in cohomology, 
    \[
        h^2(Z,L\vert_Z) \leq h^2(X_2,L) + h^3(X_2, \O_{X_2}(-Z)\otimes L).
    \]
    Thus, it suffices to show the vanishing of $h^2(X_2,L)$ and $h^3(X_2, \O_{X_2}(-Z)\otimes L)$.

    To show $h^3(X_2,\O_{X_2}(-Z)\otimes L) = 0$, first note that
    \[
        \O_{X_2}(-Z)\otimes L = \O_{X_2}((n-c)\xi_2 + ((2+\epsilon)n-b)\xi_1 + aH).
    \]
    Since $\xi_2+(2+\epsilon)\xi_1$ is ample relative to $\pi:X_2\to\PP^2$ (\Cref{lemma:Demailly3.3}(b)), it follows that for sufficiently large $n$, $\O_{X_2}(-Z)\otimes L$ is relatively ample as well, and hence $R^q\pi_*(\O_{X_2}(-Z)\otimes L) = 0$ for all $q \geq 1$.
    Leray's spectral sequence then writes
    \[
        H^3(X_2,\O_{X_2}(-Z)\otimes L)\cong H^3(\PP^2, \pi_*(\O_{X_2}(-Z)\otimes L)).
    \]
    The latter group is zero because $3 > \dim \PP^2$.

    To show $h^2(X_2,L) = 0$, we use a version of Bogomolov vanishing, which we spell out now.
    First, we push forward to $X_1$.
    Since $R^1\pi_{2*}L = 0$, 
    \[
        H^2(X_1, \pi_{2*} L) = H^2(X_1, \sym^n V_1^{\vee}\otimes\O_{X_1}((2+\epsilon)n\xi_1)).
    \]
    The dual of the sequence (\ref{ses:def V_1}) gives a filtration of $\pi_{2*}L$ whose graded pieces are the line bundles $Q_i = ((3+\epsilon)n-3i)\xi_1 +i(d-3)H$ for $0 \leq i \leq n$.
    It suffices to show that $H^2(X_1, Q_i) = 0$ for all $i$ in the interval from 0 to $n$.
    Pushing forward to $\PP^2$, the bundles $R^1\pi_{1*}Q_i$ will again vanish, so we see that 
    \[
        H^2(\PP^2, \pi_{1*}Q_i) = H^2(\PP^2, \sym^{(3+\epsilon)n-3i} \Omega_{\PP^2}(\log C)\otimes \O_{\PP^2}(i(d-3))).
    \]
    Using Serre duality, it is equivalent to show that $\sym^{(3+\epsilon)n-3i} T_{\PP^2}(-\log C)\otimes\O_{\PP^2}(-3-i(d-3))$ has no global sections for $i$ between 0 and $n$.
    Since $T_{\PP^2}(-\log C)$ is semistable of slope $-(d-3)/2$ \cite[Lemma 3]{UedaYoshinaga2008}, it follows that there are no such global sections.
    This finishes the proof.
\end{proof}

We would like to point out that for the inequality \eqref{ineq:negative_section} to possibly hold true, $4d^2-51d+90$ must be a positive number, which happens for $d\geq 11$ and hence the assumption on the degree of $C$.

\begin{prop}[cf. {\cite[Lemma 1.4.2]{el2003logarithmic}}]\label{prop:m>=6}
    Let $(\PP^2,C)$ be a logarithmic pair with $C$ smooth of degree $d \geq 15$.
    Assume $\sigma\in H^0(X_2,\O_{X_2}(m\xi_2-aH))$ is a section with $\{\sigma = 0\} = Z\cup\Gamma_2$, where $Z$ is irreducible with $[Z] = c\xi_2 + b\xi_1 - aH$.
    If $b + c \geq 6$, then 
    \[
        \frac{a}{b + c} < \frac{4d^2-51d+90}{12(d-3)}.
    \]
\end{prop}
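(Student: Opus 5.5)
The plan is to bound $a$ linearly in $b+c$ using only that $Z$ is an irreducible effective divisor different from $\Gamma_2$, and then compare with the right-hand side of \eqref{ineq:negative_section} once $d\ge 15$ and $b+c\ge 6$. By \Cref{lemma:Demailly3.3}(d) we have $b\ge 2c\ge 0$, and $b+c>0$ since $Z$ dominates $\PP^2$ (otherwise $b=c=0$ and $[Z]=-aH$ would not be effective). Set $m=b+c$.

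The first step pushes the defining section $s_Z\in H^0(X_2,\O(c\xi_2+b\xi_1-aH))$ down to $\PP^2$. By \Cref{lemma:Demailly3.3}(c) and \eqref{eq:decomp1}, $\pi_*\O(c\xi_2+b\xi_1)$ is the subsheaf of $E_{2,m}\Omega_{\PP^2}(\log C)$ built from the graded pieces \eqref{eq:Gr_filtration} with $0\le i\le c$ only. So some $i\le c$ gives a nonzero section of
\[
    \sym^{m-3i}\Omega_{\PP^2}(\log C)\otimes\O_{\PP^2}\big(i(d-3)-a\big).
\]
\Cref{thm:eff_cone} (the class $k\xi_1+tH$ on $X_1$ is effective iff $t\ge k\ge 0$) then gives $i(d-3)-a\ge m-3i$, that is, $a\le id-m$, when $k=m-3i>0$, and $a\le i(d-3)$ when $k=0$. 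Since $i\le c\le m/3$, this yields the crude bound
\[
    a\le m\Big(\frac d3-1\Big).
\]

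This crude bound is not enough. Comparing $\frac{4d^2-51d+90}{12(d-3)}$ with $\frac d3-1=\frac{4d^2-24d+36}{12(d-3)}$ shows we must gain roughly an extra $\frac{27d-54}{12(d-3)}\approx\frac94$ per unit of $m$. The main obstacle is extracting this gain, and this is exactly where the hypotheses $m\ge 6$ and $d\ge 15$ must enter. The extremal case is $b=2c$, $i=c$, $k=0$, where the leading piece of $s_Z$ is a section of $\overline K_{\PP^2}^{\,c}(-a)$.

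My first attempt would refine the effective-cone estimate using irreducibility of $Z$. Restrict $Z$ to a general fiber $S$ of $\pi$: it is an irreducible curve of class $c\zeta+b\eta$ meeting $\Gamma_2|_S$ in $b-2c\ge 0$ points. I would use this to show that the filtration pieces with $i$ close to $c$ cannot carry the whole section unless $b-2c$ is small. In that case the lower pieces, with $k=m-3i\ge 3$, are forced to be nonzero, and for them $a\le id-m$ is strictly better.

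In parallel, I would intersect $[Z]$ with products of nef classes, as in the proof of \Cref{prop:restriction_is_big}. Irreducibility and $Z\ne\Gamma_2$ give $[Z]\cdot N_1N_2N_3\ge 0$ for nef $N_j$. Using the intersection table there and the nef classes $H$, $\xi_1+H$, and $\xi_2+2\xi_1+sH$ (nef for suitable $s$ by \Cref{lemma:Demailly3.3}(b)), I expect linear inequalities of the form $3a\le 4(d-3)(b+c)+O(\text{terms in }c)$ whose $H^2$-contributions sharpen the estimate.

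The final step combines the best of these linear bounds into one inequality $a\le \alpha(d)\,m-\beta(d)$. Then $m\ge 6$ absorbs the constant $\beta(d)$ and gives $a/m$ strictly below $\frac{4d^2-51d+90}{12(d-3)}$. The remaining check is an elementary quadratic inequality in $d$, which should hold exactly for $d\ge 15$.
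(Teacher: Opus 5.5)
There is a genuine gap. You correctly see that the effective-cone bound is off by roughly a factor of $2$. However, neither suggested refinement can close it, and the idea that does close it is absent.

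The nef-intersection route cannot work even in principle. Inequalities $[Z]\cdot N_1N_2N_3\ge 0$ with $N_j$ nef, together with $b\ge 2c$ and $Z|_S\cdot\Gamma_2|_S=b-2c\ge 0$, are linear conditions on the numerical class. They are satisfied by every effective divisor without $\Gamma_2$ as a component, reduced or not. For the Fermat curve of degree $16$, \Cref{sec:Explicit Computations} exhibits a section of $\O(\xi_2+2\xi_1-10H)$. Its square shows that $2\xi_2+4\xi_1-20H$ is effective. This class has $b=2c$, $b+c=6$ and $a/(b+c)=10/3$, whereas $\frac{4d^2-51d+90}{12(d-3)}=\frac{298}{156}<2$ for $d=16$. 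So no combination of such inequalities can imply the statement. Irreducibility, or more precisely reducedness, of $Z$ must be used non-linearly, since the desired bound fails for this doubled divisor.

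The filtration route fails for a different reason. Write $\sigma$ locally as $\sum_{j\le c}W^jQ_j(f')$, with $W$ the Wronskian. Its $W$-degree is exactly $c$, because $\sigma$ vanishes to order exactly $b$ on $\Gamma_2$. So the leading coefficient $Q_c$ is never zero, and it is the only coefficient that is a global section of a graded piece of \eqref{eq:Gr_filtration}. The lower $Q_i$ change under coordinate changes $W\mapsto \det(J)\,W+\kappa(f')$ with $\kappa$ cubic, so \Cref{thm:eff_cone} cannot be applied to them. In particular, your crude bound really reads $a\le cd-m$, coming from $i=c$.

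The missing ingredient, which is what the paper uses, is the discriminant. Write $m=3p+q$ with $0\le q\le 2$; then $m\ge 6$ gives $p\ge 2$. The Demailly--El Goul map $\Delta$ sends $\sigma$, viewed fibrewise over $X_1$ as a polynomial in $W$, to its discriminant. This is a global section of $\sym^{(p-1)(3p+2q)}\Omega_{\PP^2}(\log C)\otimes\O_{\PP^2}\big(p(p-1)(d-3)-2a(p-1)\big)$ that cuts out the branch divisor of $Z\to X_1$. It is nonzero because $Z$ is reduced, irreducible and of degree $c>0$ over $X_1$, hence generically unramified; for $2Z$ it would vanish identically.

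\Cref{thm:eff_cone} then gives $(p-1)(3p+2q)\le p(p-1)(d-3)-2a(p-1)$. This is equivalent to $a/m\le pd/(2m)-1$, which is at most $d/6-1$, half of your crude bound. Finally, $d/6-1<\frac{4d^2-51d+90}{12(d-3)}$ is equivalent to $2d^2-33d+54>0$, which holds for $d\ge 15$.

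It is cleanest to take the discriminant in the exact $W$-degree $c$, whose leading coefficient $Q_c$ is nonzero. For $c\ge 2$ this gives $2a\le cd-2m\le (d/3-2)m$. For $c=1$, your leading-coefficient bound $a\le d-m$ together with $m\ge 6$ already yields $a/m\le d/6-1$.
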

\begin{proof}
    Note that $m = b + c$. 
    Write $m = 3p + q$, with $p$, $q$ nonnegative integers and $q\in\{0,1,2\}$. 
    Since $p\geq 2$, it follows as in \cite[Lemma 4.1]{demailly2000hyperbolicity} that there is a map
    \[
        \Delta: E_{2,m}\Omega_{\PP^2}(\log C)\otimes\O_{\PP^2}(-a) \to \sym^{(p-1)(3p+2q)}\Omega_{\PP^2}(\log C)\otimes\O_{\PP^2}(p(p-1)(d-3)-2a(p-1)).
    \]
    For a germ $s$ of a section of $E_{2,m}\Omega_{\PP^2}(\log C)$, the zero locus $\{s = 0\}$ defines a germ of a divisor $D \subset X_2$, and the zero locus of the image $\{\Delta(s) = 0\}$ is the divisor in $X_1$ along which the projection $D \to X_1$ has branch points \cite[p.533]{demailly2000hyperbolicity}.
    Since $c > 0$, the projection $Z\to X_1$ is a dominant morphism branched along a proper closed subset of $X_1$.
    Hence, we obtain a nontrivial global section $\Delta(\sigma)\in H^0(X_1, \O_{X_1}(t\xi_1 + rH))$, where
    \[
        t = (p-1)(3p+2q) \qquad\text{and}\qquad r = p(p-1)(d-3) - 2a(p-1).
    \] 
    By \Cref{thm:eff_cone}, $t \leq r$ which gives
    \[
        \frac{a}{b+c} = \frac{a}{m} \leq \frac{p(d-3) - 3p - 2q}{2m} = \frac{pd}{2m} - 1 \leq \frac{d}{6}-1.
    \]
    One can check that for $d\geq 15$, 
    \[
        \frac{d}{6}-1 < \frac{4d^2-51d+90}{12(d-3)}
    \]
\end{proof}

A consequence of \Cref{prop:m>=6} is a surprisingly simple test to verify the {\GGL}  Conjecture for complements of smooth curves of degree at least $15$:

\begin{prop}\label{elgoul test}
    Let $(\PP^2,C)$ be a logarithmic pair with $C$ smooth of degree $d \geq 15$.
    If 
    \[
        H^0(X_2, \O_{X_2}(\xi_2 + b\xi_1 - aH)) \cong H^0(X_1, V_1^\vee\otimes\O_{X_1}(b\xi_1 - aH)) = 0,
    \]
    for all $0 < b \leq 4$ and $a > 0$,
    then $\PP^2\setminus C$ satisfies {\GGL}.
\end{prop}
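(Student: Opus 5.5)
The plan is to combine \Cref{prop:ZcupGamma}, \Cref{prop:m>=6} and \Cref{prop:restriction_is_big}. Small cases are ruled out either by \Cref{thm:eff_cone} or by the vanishing hypothesis. First, the isomorphism in the statement holds because $\pi_{2*}\O_{X_2}(\xi_2) = V_1^\vee$ and $\xi_1, H$ are pulled back from $X_1$. So the projection formula gives $H^0(X_2,\O_{X_2}(\xi_2+b\xi_1-aH)) \cong H^0(X_1, V_1^\vee\otimes\O_{X_1}(b\xi_1-aH))$.

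Since $d\geq 15\geq 11$, \Cref{prop:ZcupGamma} provides integers $m,a>0$ and a section $\sigma\in H^0(X_2,\O_{X_2}(m\xi_2-aH))$ with $\{\sigma=0\}=Z\cup\Gamma_2$, where $Z$ is irreducible. Its proof shows that $[Z]=c\xi_2+b\xi_1-aH$ with $m=b+c>0$ and $a>0$; the $H$-coefficient of $Z$ is negative. Because $Z\neq\Gamma_2$, \Cref{lemma:Demailly3.3}(d) gives $b\geq 2c\geq 0$. By \Cref{prop:restriction_is_big}, it suffices to verify the inequality \eqref{ineq:negative_section} for this $Z$. If $b+c\geq 6$, this is exactly \Cref{prop:m>=6}, so only the cases $b+c\leq 5$ remain. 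We show each of them is impossible.

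\emph{Case $c=0$.} Then $b=m>0$, and $Z$ is an effective divisor on $X_2$ of class $b\xi_1-aH$. Its defining section lies in $H^0(X_2,\O_{X_2}(b\xi_1-aH)) = H^0(X_1,\O_{X_1}(b\xi_1-aH))$, using $\pi_{2*}\O_{X_2}=\O_{X_1}$. This group vanishes by \Cref{thm:eff_cone}, since $b>0$ and $-a<b$. So this case cannot occur.

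\emph{Case $c\geq 2$.} Then $b\geq 4$, so $b+c\geq 6$, which was already handled above.

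\emph{Case $c=1$.} Then $2\leq b\leq 4$, and $Z$ gives a nonzero section of $\O_{X_2}(\xi_2+b\xi_1-aH)$ with $0<b\leq 4$ and $a>0$. This contradicts the hypothesis.

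Hence $b+c\geq 6$ always holds. Then \Cref{prop:m>=6} gives \eqref{ineq:negative_section}, and \Cref{prop:restriction_is_big} yields {\GGL} for $\PP^2\setminus C$.

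The main point needing care is the bookkeeping of signs coming from \Cref{prop:ZcupGamma}. One must confirm that the component $Z$ chosen there indeed has class $c\xi_2+b\xi_1-aH$ with $a>0$. This holds because its $H$-coefficient $a_0$ was shown to be negative. It is also what makes the hypotheses of \Cref{prop:m>=6} and \Cref{prop:restriction_is_big} apply verbatim. Beyond that, the case analysis is elementary, relying only on $b\geq 2c$.
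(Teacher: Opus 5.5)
Your proposal is correct and follows essentially the same route as the paper. Both obtain $Z\cup\Gamma_2$ from \Cref{prop:ZcupGamma}, rule out $c=0$ because $b\xi_1-aH$ is not effective, use the vanishing hypothesis to force $b\geq 5$ when $c=1$, note that $b\geq 4$ when $c\geq 2$, and conclude $m=b+c\geq 6$, so that \Cref{prop:m>=6} and \Cref{prop:restriction_is_big} finish the argument.
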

\begin{proof}
    As $d\geq 15$, there are positive integers $m, a$ and a section $\sigma\in H^0(X_2,\O_{X_2}(m\xi_2-aH))$ with $\{\sigma = 0\} = Z\cup\Gamma_2$, where $Z$ is irreducible (\Cref{prop:ZcupGamma}).
    Writing $[Z] = c\xi_2 + b\xi_1 - aH$, we must have $b\geq 2c\geq 0$ and $b + c = m$.
    Note that $c\neq 0$ because $b\xi_1 - aH$ is not effective.
    If $c = 1$, then $b\geq 5$ because otherwise, $Z$ would be associated with a nonzero section of $H^0(X_2, \O_{X_2}(\xi_2 + b\xi_1 - aH)\big)$ with $b\leq 4$.
    If $c\geq 2$, then $b\geq 4$.
    In any case, $m\geq 6$ and we are done by \Cref{prop:m>=6} together with \Cref{prop:restriction_is_big}.
\end{proof}

Combining the above results we obtain \Cref{test theorem}, which provides a test we can use to verify {\GGL} for complements of smooth plane curves relying only on the effectivity of the family of line bundles $\O_{X_2}(\xi_2 + b\xi_1 - aH)$, where the coefficient of the class $\xi_2$ is equal to one.
\begin{proof}[Proof of \Cref{test theorem}]
    If $d\geq 15$ and $H^0(X_2, \O(\xi_2 + b\xi_1 - aH)) = 0$ for all $a, b > 0$, {\GGL} follows from \Cref{elgoul test}.
    If $d\geq 11$, $H^0(X_2, \O(\xi_2 + b\xi_1 - aH))$ has one section whose vanishing locus is irreducible, and 
    \[
        \frac{a}{b + 1} < \frac{4d^2-51d+90}{12(d-3)},
    \]
    then \Cref{prop:restriction_is_big} gives {\GGL}.
    Finally, if there exist two sections in the spaces $H^0(X_2, \O(\xi_2 + b\xi_1 - aH))$ for possibly different values of $a$ and $b$, not both sections vanishing along the same divisor, their common zero locus in $X_2$ has dimension at most 2, and {\GGL} follows from \Cref{thm:McQuillan}.
\end{proof}

\section{Computing 2-Jet Differentials via Bigraded Modules}\label{sec:Explicit Computations}
In this section we give an algorithm for computing sections of
\begin{equation}\label{iso: interesting sections}
    H^0(X_2, \O(\xi_2 + b\xi_1 - aH)) \cong H^0(X_1, V_1^\vee\otimes\O(b\xi_1 - aH)) \qquad\text{ for all } a,b > 0,
\end{equation}
which together with \Cref{test theorem}, becomes a computational test for {\GGL} on complements of smooth plane curves.
The main step in this direction is to express both the vector bundle $T_{X_1}(-\log\pi_1^*C)$ and the subbundle $V_1$ in terms of maps between line bundles (\Cref{TX1log as maps of line bundles}).
The representation of $X_1$ as a hypersurface in $\PP^2\times\PP^2$ is crucial for this step.

In order to write $T_{X_1}(-\log\pi_1^*C)$ and $V_1$ in terms of maps of line bundles, we set some notation.
Let $R = \C[x_0,x_1,x_2,a_0,a_1,a_2]$ be the bigraded polynomial ring with $\deg(x_i) = (1,0)$ and $\deg(a_i) = (0,1)$.
Let $S = R/(g)$ be the bihomogeneous coordinate ring of $X_1$, so that $X_1 = \Proj(S) \subset \PP^2\times\PP^2$.
Furthermore, we will use the bigraded notation based on \eqref{eq:pullback classes}:
\[
    \O_{X_1}(1,0) = \O_{X_1}(H) \qquad\text{and}\qquad \O_{X_1}(0,1) = \O_{X_1}(\xi_1 + H).
\]
Therefore, twists of any vector bundle $\E$ on $X_1$ are written $\E(p,q) = \E\otimes\O_{X_1}(q\xi_1+(p+q)H)$.
Moreover, translating \Cref{thm:eff_cone} into bigraded notation, if $H^0(X_1,\O_{X_1}(p,q)) \neq 0$, then $p+q \geq q \geq 0$, i.e., both $p$ and $q$ must be nonnegative.

Pulling back the sequence \eqref{seq:euler_log} to $X_1$ we get the exact sequence 
\[\begin{tikzcd}
    0 \rar & \pi_1^*T_{\PP^2}(-\log C) \rar & \O_{X_1}(1,0)^3 \rar & \O_{X_1}(d,0)\rar & 0
\end{tikzcd}\]
and thus we consider $\pi_1^*T_{\PP^2}(-\log C)$ as a subsheaf of $\O_{X_1}(1,0)^3$.
Consequently, the tautological line $\O_{X_1}(-\xi_1) = \O_{X_1}(1,-1)\subset \pi_1^*T_{\PP^2}(-\log C)$ can be embedded in $\O_{X_1}(1,0)^3$.
At each $(x,a)\in X_1$, the vector $a = (a_0,a_1,a_2)$ is the generator of the tautological line in the fiber of $\O_{X_1}(1,0)^3$ over $x$, thus the embedding is given by
\[
    \O_{X_1}(1,-1)\xhookrightarrow{[a_0, a_1, a_2]^T} \O_{X_1}(1,0)^3.
\]

With this bigraded notation, we can use duality to obtain global sections of \eqref{iso: interesting sections} as global sections of bigraded twists of $V_1$.
\Cref{lemma:sections of V1} states the precise relationship.
\begin{lemma}\label{lemma:sections of V1}
    Let $(\PP^2,C)$ be a smooth logarithmic pair of general type with $C$ of degree $d$.
    Duality gives an isomorphism
    \[
        H^0(X_1, V_1(p,q)) \cong H^0(X_1, V_1^\vee(p-d+2,q+1)).
    \]
    In particular, global sections of $V_1(p,q)$ correspond to negatively twisted logarithmic invariant 2-jet differentials if and only if $p+q \leq d-4$.
\end{lemma}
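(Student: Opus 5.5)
The plan is to get the isomorphism from the rank-two identity $V_1\cong V_1^\vee\otimes\det V_1$, which holds for any rank-two bundle. Everything then reduces to computing $\det V_1$ in $\pic(X_1)$ and translating into the bigraded notation $\E(p,q)=\E\otimes\O_{X_1}(q\xi_1+(p+q)H)$.

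\textbf{Step 1: compute $\det V_1$.} By the sequence \eqref{ses:def V_1}, $\det V_1\cong T_{X_1/\PP^2}\otimes\O_{X_1}(-\xi_1)$. For $T_{X_1/\PP^2}$, set $E=T_{\PP^2}(-\log C)$ and use the relative Euler sequence of the projective bundle of lines $X_1=\PP E$:
\[
0\to\O_{X_1}\to\pi_1^*E\otimes\O_{X_1}(\xi_1)\to T_{X_1/\PP^2}\to 0 .
\]
Taking determinants gives $T_{X_1/\PP^2}\cong\O_{X_1}(2\xi_1)\otimes\pi_1^*\det E$. By \eqref{seq:euler_log}, $c_1(E)=3H-dH=-(d-3)H$. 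Hence
\[
\det V_1\cong\O_{X_1}(\xi_1-(d-3)H).
\]
As a consistency check, restricted to a fiber $F\cong\PP^1$ this should match $\O(2)\oplus\O(-1)$, and it has degree $1$ as required. The one place to be careful is the sign convention for the tautological bundle, namely $\O_{X_1}(-1)=\O_{X_1}(-\xi_1)\subset\pi_1^*E$, which is the convention used throughout the paper.

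\textbf{Step 2: translate into bigrades.} In bigraded notation $\xi_1-(d-3)H$ is the twist $(p',q')$ with $q'=1$ and $p'+q'=-(d-3)$, so $(p',q')=(-d+2,1)$. Therefore
\[
V_1(p,q)\cong V_1^\vee\otimes\det V_1\otimes\O_{X_1}(p,q)=V_1^\vee(p-d+2,q+1).
\]
Taking global sections gives the first claim.

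\textbf{Step 3: the correspondence with jet differentials.} By \eqref{iso: interesting sections}, $V_1^\vee(p-d+2,q+1)=V_1^\vee\otimes\O_{X_1}(b\xi_1-aH)$ with $b=q+1$ and $-a=(p-d+2)+(q+1)=p+q-d+3$. So $a>0$ exactly when $p+q\le d-4$, which is the stated condition; sections with $a\le 0$ are not negatively twisted. One also needs $b>0$, i.e.\ $q\ge 0$, and this is where the main subtlety of the ``if and only if'' lies. I would dispose of it using the earlier cone results:
\begin{itemize}
\item For $q\le -2$, $b+1\le 0$, so $\O_{X_2}(\xi_2+b\xi_1-aH)$ is not relatively effective by \Cref{lemma:Demailly3.3}(a). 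Hence there are no sections.
\item For $q=-1$, $b=0$, and $H^0(X_2,\O(\xi_2-aH))\cong H^0(\PP^2,E_{2,1}\Omega_{\PP^2}(\log C)(-a))=H^0(X_1,\O(\xi_1-aH))$. This vanishes for $a>0$ by \Cref{thm:eff_cone}.
\end{itemize}
Thus every nonzero section of $V_1(p,q)$ with $p+q\le d-4$ automatically has $b=q+1>0$ and $a>0$, and so is a negatively twisted section of $\O_{X_2}(\xi_2+b\xi_1-aH)$ with $b>0$. Conversely, every such section arises this way with $(p,q)=(d-3-a-b+1,\,b-1)$.
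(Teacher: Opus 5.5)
Your Steps 1 and 2 are correct and follow the paper's route: the rank-two identity $V_1\cong V_1^\vee\otimes\det V_1$ with $\det V_1\cong\O_{X_1}(2-d,1)$. The paper only asserts this determinant, and your relative Euler sequence computation justifies it correctly. It is also consistent with the kernel $\O(1-d,2)=T_{X_1/\PP^2}$ in \eqref{ses: def V1 bigraded}.

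The gap is in the first bullet of Step 3, at $q=-2$. There $b=q+1=-1$ and $c=1$, so $b+c=0$. By \Cref{lemma:Demailly3.3}(a), $\O_{X_2}(\xi_2-\xi_1)$ therefore \emph{is} relatively effective; indeed $\xi_2-\xi_1=[\Gamma_2]$. Non-effectivity needs $b+1<0$, which only covers $q\le -3$.

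Notice that this bullet never uses $a>0$, so it proves too much. If it were valid, it would give $H^0(X_1,V_1(p,-2))=0$ for every $p$. That is false: for $p\ge d-1$ the subsheaf $\O(p-d+1,0)\subset V_1(p,-2)$ has nonzero sections. These are exactly the sections $\Gamma_2+(p-d+1)H$ the paper mentions after \Cref{V1 as maps of modules}. So at $q=-2$ the vanishing genuinely depends on the negative $H$-twist, and you need a separate argument there.

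The fix is the argument the paper uses, namely the twisted sequence
\[
0\to\O(p-d+1,q+2)\to V_1(p,q)\to\O(p+1,q-1)\to 0 .
\]
By \Cref{thm:eff_cone}, $\O(p',q')$ has sections only when $p',q'\ge 0$. The kernel would need $p\ge d-1$ and $q\ge -2$, hence $p+q\ge d-3$, so it has no sections once $p+q\le d-4$. The quotient has no sections once $q\le 0$.

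This gives $H^0(X_1,V_1(p,q))=0$ for all $q\le 0$ with $p+q\le d-4$ in one step. It covers both of your bullets, replaces the case analysis via \Cref{lemma:Demailly3.3}(a) and \Cref{thm:eff_cone}, and even shows that any nonzero section has $b=q+1\ge 2$. With that repair, your proof of the ``in particular'' statement, including your stronger reading with $b>0$, goes through.
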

\begin{proof}
    Note that $\det(V_1)\cong\O_{X_1}(2-d,1)$.
    As $V_1$ is locally free of rank 2, we have $V_1\cong V_1^\vee\otimes\det(V_1)$ and therefore, 
    \[
         V_1(p,q) \cong V_1^\vee(p-d+2,q+1).
    \]
    The isomorphism in global sections follows.
    
    The bigraded notation writes
    \[
        V_1^\vee(p-d+2,q+1) = V_1^\vee((q+1)\xi_1 + (p+q-d+3)H).
    \]
    Therefore, global sections of $V_1(p,q)$ are dual to negatively twisted logarithmic invariant 2-jet differentials if and only if $p+q \leq d-4$ and $q\geq -1$.
    However, we claim that $H^0(X_1,V_1(p,q)) = 0$ if $p+q \leq d-4$ and $q < -1$, making the ``in particular'' statement trivially true.
    To show the claim, consider the exact sequence \eqref{ses:def V_1} twisted by $\O_{X_1}(p,q)$:
    \begin{equation}\label{ses: def V1 bigraded}
    \begin{tikzcd}
        0 \rar & \O_{X_1}(p-d+1,q+2)\rar & V_1(p,q)\rar & \O_{X_1}(p+1,q-1)\rar & 0.
    \end{tikzcd}
    \end{equation}
    By \Cref{thm:eff_cone}, the kernel has no global sections if $p+q \leq d-4$, and the quotient has no global sections if $q < -1$.
\end{proof}

Once we obtain the description of $T_{X_1}(-\log\pi_1^*C)$ and $V_1$ in terms of maps between line bundles (\Cref{TX1log as maps of line bundles}), we use this description in \Cref{V1 as maps of modules} to compute a graded section module that captures all the sections of $V_1$ giving rise to negatively twisted logarithmic invariant 2-jet differentials.
Its proof relies on \Cref{lemma: injectivity of H1 map} below to argue that the module description captures all the global sections in each twist by $\O_{X_1}(p,q)$.

\begin{lemma}\label{lemma: injectivity of H1 map}
    Assume $(p,q)\in\Z^2$ with $q\geq -1$, and let $\O_{X_1}(p,q) \to \O_{X_1}(p,q+1)$ be the map given by multiplication by $a_i$, for some $i\in\{0,1,2\}$.
    Then the induced map on first cohomology groups
    \[
        \alpha: H^1(X_1, \O_{X_1}(p,q)) \to H^1(X_1,\O_{X_1}(p,q+1))
    \]
    is injective.
\end{lemma}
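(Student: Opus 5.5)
The plan is to compute $H^1(X_1,\O_{X_1}(p,q))$ directly from the ambient space $\PP^2\times\PP^2$, where the Künneth formula makes everything explicit, and then read off injectivity. Since $g=\sum a_j\,\partial f/\partial x_j$ has bidegree $(d-1,1)$, the ideal sheaf sequence of the hypersurface $X_1\subset\PP^2\times\PP^2$, twisted by $\O(p,q)$, is
\[
  0\to \O_{\PP^2\times\PP^2}(p-d+1,q-1)\xrightarrow{\cdot g}\O_{\PP^2\times\PP^2}(p,q)\to \O_{X_1}(p,q)\to 0 .
\]
Multiplication by $a_i$ maps this sequence for $(p,q)$ to the same sequence for $(p,q+1)$, because $a_i$ commutes with $g$. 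The induced map on long exact sequences is natural, and its map on $H^1(X_1,\cdot)$ is exactly $\alpha$.

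Next I would compute the ambient cohomology with Künneth, $H^k(\O(s,t))=\bigoplus_{i+j=k}H^i(\PP^2,\O(s))\otimes H^j(\PP^2,\O(t))$. On $\PP^2$, $\O(t)$ has only $H^0$ when $t\ge -1$ (and this $H^0$ is zero for $t=-1$), and no $H^1$ for any $t$. Since $q\ge -1$ and $q-1\ge -2$, each $\O(t)$ that appears has no higher cohomology, except that $t=-2$ has no cohomology at all. Hence
\[
  H^1(\O(p,q))=H^1(\PP^2,\O(p))\otimes H^0(\PP^2,\O(q))=0,
\]
and
\[
  H^2(\O(p-d+1,q-1))=H^2(\PP^2,\O(p-d+1))\otimes H^0(\PP^2,\O(q-1)).
\]
The long exact sequence then identifies
\[
  H^1(X_1,\O_{X_1}(p,q))\cong \ker\Big(H^2(\O(p-d+1,q-1))\xrightarrow{\cdot g}H^2(\O(p,q))\Big),
\]
and the analogous identification holds for $(p,q+1)$. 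By naturality, $\alpha$ corresponds to the restriction to these kernels of multiplication by $a_i$,
\[
  H^2(\PP^2,\O(p-d+1))\otimes H^0(\PP^2,\O(q-1))\to H^2(\PP^2,\O(p-d+1))\otimes H^0(\PP^2,\O(q)).
\]
This map is $\mathrm{id}\otimes(\cdot a_i)$, and it is injective because multiplication by $a_i$ on $\C[a_0,a_1,a_2]$ is injective and tensoring over a field preserves injectivity. The restriction of an injective map to a subspace is injective, so $\alpha$ is injective. In the edge case $q=-1$ the group $H^0(\PP^2,\O(-2))$ vanishes, so the source of $\alpha$ is already $0$, consistent with the above.

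The main point requiring care is the bookkeeping. One must check that $g$ has bidegree $(d-1,1)$ in the bigraded convention $\O_{X_1}(1,0)=H$, $\O_{X_1}(0,1)=\xi_1+H$. One must also check that the identifications of $H^1(X_1,\cdot)$ with kernels on $H^2$ are compatible with multiplication by $a_i$; this holds because the connecting homomorphisms are natural in morphisms of short exact sequences. Finally, the hypothesis $q\ge -1$ is exactly what kills the unwanted Künneth terms: it makes $H^1$ of the middle term vanish and removes the $H^0\otimes H^2$ and $H^1\otimes H^1$ contributions.
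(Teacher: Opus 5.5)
Your proof is correct and follows the paper's route. The ideal sheaf sequence of $X_1\subset\PP^2\times\PP^2$ together with $H^1(\PP^2\times\PP^2,\O(p,q))=0$ embeds $H^1(X_1,\O_{X_1}(p,q))$ compatibly into $H^2(\PP^2\times\PP^2,\O(p-d+1,q-1))$, which reduces the lemma to injectivity of multiplication by $a_i$ on these ambient $H^2$ groups. The only difference is in that last step. The paper uses the restriction sequence to $\{a_i=0\}\cong\PP^2\times\PP^1$ and the vanishing of $H^1(\PP^2\times\PP^1,\O(p-d+1,q))$, whereas you read injectivity directly from K\"unneth as $\mathrm{id}\otimes(\cdot a_i)$. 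A small correction to your closing remark: the hypothesis $q\geq -1$ is needed only to kill the $H^0\otimes H^2$ term, since the other vanishings you list hold for every $q$.
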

\begin{proof}
    To prove this, we want to compare $\alpha$ to a map on $Y = \PP^2 \times \PP^2$. Consider the following diagram
    \begin{equation}\label{diagram:restriction}
    \begin{tikzcd}
        0\dar & 0\dar \\
        \O_Y(p-d+1,q-1) \dar\rar & \O_Y(p-d+1,q)\dar \\
        \O_Y(p,q) \dar\rar & \O_Y(p,q+1)\dar \\
        \O_{X_1}(p,q) \rar\dar & \O_{X_1}(p,q+1)\dar\\
        0 & 0 
    \end{tikzcd}
    \end{equation}
    where all horizontal maps are given by multiplication by $a_i$. Note that, by the K\"unneth formula, $H^1(\PP^2\times\PP^2, \O(p,q)) = 0$. Thus, taking higher cohomology of (\ref{diagram:restriction}) gives the following.
    \begin{equation*}
    \begin{tikzcd}
        H^1(X_1, \O_{X_1}(p,q)) \rar{\alpha}\dar[hook] & H^1(X_1,\O_{X_1}(p,q+1))\dar[hook]\\
        H^2(Y, \O_Y(p-d+1,q-1)) \rar & H^2(Y, \O_Y(p-d+1,q))
    \end{tikzcd}
    \end{equation*}
    The injectivity of the top map follows from the injectivity of the bottom map.

    To see injectivity of the bottom map, note that the map is induced from the map on cohomology of the sequence
    \[\begin{tikzcd}
        0 \rar & \O_Y(p-d+1,q-1) \rar & \O_Y(p-d+1,q) \rar & \O_{\PP^2 \times \PP^1}(p-d+1,q) \rar & 0,
    \end{tikzcd}\]
    and so the kernel is $H^1(\PP^2\times\PP^1, \O_{\PP^2 \times \PP^1}(p-d+1,q))$. 
    This last is 0 by the K\"unneth formula and the fact that $q \geq -1$.
    The result follows.
\end{proof}

\subsection{A Bigraded Section Module for 2-Jet Differentials}
Because for the rest of the paper we will deal exclusively with sheaves on $X_1$, we will omit the subscript from the structure sheaf $\O_{X_1}$.

\begin{prop}\label{TX1log as maps of line bundles}
    Let $X_1 = \Proj(S) \subset \PP^2 \times \PP^2$ with projection $\pi_1:X_1\to \PP^2$ to the first factor.
    \begin{enumerate}
        \item[(a)] Let $\K$ be the kernel of the map $\O(1,0)^3\oplus\O(0,1)^3 \to \O(d,0)\oplus\O(d-1,1)$ defined by the matrix
        \[\begin{bmatrix}
            \frac{\partial f}{\partial x_0} & \frac{\partial f}{\partial x_1} & \frac{\partial f}{\partial x_2} & 0 & 0 & 0\\[5pt]
            \frac{\partial g}{\partial x_0} & \frac{\partial g}{\partial x_1} & \frac{\partial g}{\partial x_2} & \frac{\partial f}{\partial x_0} & \frac{\partial f}{\partial x_1} & \frac{\partial f}{\partial x_2} 
        \end{bmatrix}.
        \]
        Then,
        \[
            T_{X_1}(-\log\pi_1^*C) \cong \coker(\O\xrightarrow{[0 \ 0 \ 0 \ a_0 \ a_1 \ a_2]^T}\K).
        \]
        \item[(b)] Let $\K'$ be the kernel of the map $\O(1,-1)\oplus\O(0,1)^3 \to \O(d-1,1)$ defined by the matrix
        \[\begin{bmatrix}
            \sum a_i\frac{\partial g}{\partial x_i} & \frac{\partial f}{\partial x_0} & \frac{\partial f}{\partial x_1} & \frac{\partial f}{\partial x_2}
        \end{bmatrix}.\]
        Then
        \[
            V_1 \cong \coker(\O\xrightarrow{[0 \ a_0 \ a_1 \ a_2]^T}\K').
        \]
    \end{enumerate}
\end{prop}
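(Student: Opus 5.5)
For (a), I will build a log Euler-type presentation of $T_{X_1}(-\log\pi_1^*C)$ from the ambient space $Y=\PP^2\times\PP^2$. The Euler sequences of the two factors give
\[
0\to\O_Y^2\to\O_Y(1,0)^3\oplus\O_Y(0,1)^3\to T_Y\to 0.
\]
The first copy of $\O_Y$ maps by $[x\ 0]^T$ and the second by $[0\ a]^T$. Restricting to $X_1$ and using the normal bundle sequence $0\to T_{X_1}\to T_Y|_{X_1}\to \O_{X_1}(d-1,1)\to 0$, I get $T_{X_1}$ as the cokernel of $\O^2$ inside the kernel of $\nabla g=[\partial g/\partial x\ \ \partial g/\partial a]$. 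Here $\partial g/\partial a_i=\partial f/\partial x_i$, which gives the second row of the matrix.

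Next I impose logarithmic tangency. Since $\pi_1^*C$ is the smooth pullback of $C$, a vector field is tangent to $\pi_1^*C$ iff its image $d\pi_1(v)$ in $\pi_1^*T_{\PP^2}$ lies in $\pi_1^*T_{\PP^2}(-\log C)$. By the sequence \eqref{seq:euler_log} pulled back, this means exactly that the $\O(1,0)^3$-component satisfies $\nabla f\cdot v=0$, which gives the first row. The Euler relation $[x\ 0]^T$ is automatically killed. Indeed $\nabla f\cdot x=df=0$ mod nothing, but in the quotient presentation I will note that $\sum x_i\partial_i f=d f$. Since $f$ does not vanish on $X_1$, $x$ is not in the kernel. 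So I have to redo this step: $T_{\PP^2}(-\log C)$ is already $\ker(\nabla f)$ inside $\O(1)^3$ with no Euler quotient, by the snake-lemma isomorphism of the first proposition. Thus the base direction contributes no $\O$ to quotient out, and only the fiber Euler section $[0\ 0\ 0\ a]^T$ remains. This section lies in $\K$ because $\nabla_a g\cdot a=g=0$ on $X_1$.

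To make this rigorous, I will write a commutative diagram with exact rows. The rows are
\[
0\to\O\to\K\to T_{X_1}(-\log\pi_1^*C)\to 0
\]
and $0\to\O\to \ker(\O(1,0)^3\oplus\O(0,1)^3\to\O(d-1,1))\to T_{X_1}\to 0$ (modulo the base Euler copy), compatible with the maps to $\pi_1^*T_{\PP^2}$. I will check exactness locally, where the fiber direction is the relative Euler sequence for $\PP(\pi_1^*T_{\PP^2}(-\log C))$. The relative tangent sheaf is $\ker(\nabla f|)$ in the $a$-variables modulo $a$, matching the relative Euler sequence $0\to\O\to \pi_1^*T_{\PP^2}(-\log C)\otimes\O(\xi_1)\to T_{X_1/\PP^2}\to0$. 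Here $\O(0,1)^3$ restricted to $\ker\nabla f$ is $\pi_1^*T_{\PP^2}(-\log C)(\xi_1)$ after the twist conventions in \eqref{eq:pullback classes}. I expect this bookkeeping to be the main obstacle: identifying $\K$ as an extension of $\pi_1^*T_{\PP^2}(-\log C)$ by $\pi_1^*T_{\PP^2}(-\log C)(\xi_1)$, where the second row is the derivative of the condition $g=0$, and checking that the $\partial g/\partial x$ entries encode the correct connection-type coupling.

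For (b), $V_1$ is the preimage of the tautological line $\O(1,-1)\xrightarrow{a}\O(1,0)^3$ under $d\pi_1$. So I restrict $\K$ to vectors whose first block has the form $\lambda a$ with $\lambda$ a section of $\O(1,-1)$. The first row then reads $\lambda\,\nabla f\cdot a=\lambda g=0$, which is automatic. The second row becomes $\lambda\sum a_i\partial g/\partial x_i+\nabla f\cdot w=0$, which is precisely $\K'$. The Euler section $[0\ a]^T$ lies in $\K'$, and taking the quotient gives $V_1$. The quotient is compatible with \eqref{ses:def V_1}: the projection to $\lambda$ gives $V_1\to\O(-\xi_1)$, and its kernel is $T_{X_1/\PP^2}$ as computed in (a).
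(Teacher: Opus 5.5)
Your proposal is correct and is essentially the paper's argument. Both present $T_{X_1}$ as $\ker(\nabla g)/\O^2$ via the Euler sequences on $\PP^2\times\PP^2$ and impose logarithmic tangency through $\nabla f$ on the first block; you phrase this as the preimage $(d\pi_1)^{-1}(\pi_1^*T_{\PP^2}(-\log C))$, the paper as the intersection of $T_{X_1}$ with $T_{\PP^2\times\PP^2}(-\log\widetilde{C})|_{X_1}$, which is the same condition. Both then drop the base Euler section because $\sum_i x_i\frac{\partial f}{\partial x_i}=d\cdot f$ is not identically zero on $X_1$, and get (b) by restricting the first block to $\lambda a$, where the first row vanishes since $\nabla f\cdot a=g$.

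The extension bookkeeping and local check you flag as the main obstacle are unnecessary. The preimage of $\pi_1^*T_{\PP^2}(-\log C)$ in $\O(1,0)^3$ is $\ker(\nabla f)\oplus\O\cdot x$, and $[x\ 0]^T\in\ker(\nabla g)$. So subtracting a multiple of $[x\ 0]^T$ moves every representative into $\K$, leaving only $\O\cdot[0\ a]^T$ to quotient out. That already gives the isomorphism in (a), together with its compatibility with $d\pi_1$ that you use in (b).
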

\begin{proof}
    Let $\widetilde{C}$ be the preimage of $C$ inside $\PP^2\times\PP^2$ via the first projection.
    We claim that
    \[
        T_{X_1}(-\log\pi_1^*C) \cong T_{\PP^2\times\PP^2}(-\log\widetilde{C})\big|_{X_1} \cap T_{X_1},
    \]
    where the intersection on the right-hand side occurs inside $T_{\PP^2\times\PP^2}$. 
    As $X_1\subset\PP^2\times\PP^2$ is a smooth hypersurface defined by the polynomial
    \[
        g = a_0 \frac{\partial f}{\partial x_0} + a_1 \frac{\partial f}{\partial x_1} + a_2 \frac{\partial f}{\partial x_2},
    \]
    we have the following short exact sequence:
    \begin{equation}\label{ses:normal T_{X_1}}
    \begin{tikzcd}
        0 \rar & T_{X_1} \rar & T_{\PP^2\times\PP^2}\big|_{X_1}\rar & \O(d-1,1)\rar & 0.
    \end{tikzcd}
    \end{equation}
    Combining it with the exact sequences defining logarithmic tangent sheaves, we get a commutative diagram.
    \[\begin{tikzcd}
        & 0\dar & 0\dar & &\\
        & T_{X_1}(-\log\pi_1^*C)\dar & T_{\PP^2\times\PP^2}(-\log\widetilde{C})\big|_{X_1}\dar & \\
        0\rar & T_{X_1}\dar\rar & T_{\PP^2\times\PP^2}\big|_{X_1}\dar\rar & \O(d-1,1)\rar & 0\\
        & \O_{\pi_1^*C}(\pi_1^*C) \dar\rar{\cong} & \O_{\widetilde{C}}(\widetilde{C})\big|_{X_1} \dar & &\\
        & 0 & 0 &&
    \end{tikzcd}\]
    The Snake Lemma completes the diagram
    \[\begin{tikzcd}
        & 0\dar & 0\dar & &\\
        0\rar & T_{X_1}(-\log\pi_1^*C)\dar\rar & T_{\PP^2\times\PP^2}(-\log\widetilde{C})\big|_{X_1}\dar\rar & \O(d-1,1)\dar{\cong}\rar & 0\\
        0\rar & T_{X_1}\dar\rar & T_{\PP^2\times\PP^2}\big|_{X_1}\dar\rar & \O(d-1,1)\rar & 0\\
        & \O_{\pi_1^*C}(\pi_1^*C) \dar\rar{\cong} & \O_{\widetilde{C}}(\widetilde{C})\big|_{X_1} \dar & &\\
        & 0 & 0 &&
    \end{tikzcd}\]
    showing that indeed $T_{X_1}(-\log\pi_1^*C) \cong T_{\PP^2\times\PP^2}(-\log\widetilde{C})\big|_{X_1} \cap T_{X_1}$.
    
    As with the diagram in \eqref{diagram1}, we have three short exact sequences which we can write in a single diagram as below. 
    \[\begin{tikzcd}
        && 0\dar & 0\dar & \\
        && \O\oplus\O\dar & \O\dar & \\
        && \O(1,0)^3 \oplus \O(0,1)^3\dar & \O(d,0)\dar & \\
        0\rar & T_{\PP^2\times\PP^2}(-\log \widetilde{C})\big|_{X_1}\rar & T_{\PP^2\times\PP^2}\big|_{X_1}\dar\rar & \O_{\widetilde{C}}(\widetilde{C})\big|_{X_1}\dar\rar & 0\\
        && 0 & 0 &
    \end{tikzcd}
    \]
    Consider the map
    \[
       \nabla f: \O(1,0)^3 \oplus\O(0,1)^3 \to \O(d,0), \qquad \nabla f = \begin{bmatrix} \frac{\partial f}{\partial x_0} & \frac{\partial f}{\partial x_1} & \frac{\partial f}{\partial x_2} & 0 & 0 & 0\end{bmatrix}.
    \]
    Also, let $\O\oplus\O\to \O\oplus 0 = \O$ be the projection onto the first factor.
    These two maps commute with the maps in the diagram above and thus, by the Snake Lemma, we get a commutative diagram with exact columns and rows.
    \[\begin{tikzcd}
        & 0\dar & 0\dar & 0\dar & \\
        0\rar & 0\oplus\O\dar\rar & \O\oplus\O\dar\rar & \O\oplus 0 \dar\rar & 0 \\
        0\rar & \ker(\nabla f)\rar\dar & \O(1,0)^3 \oplus \O(0,1)^3\dar\rar{\nabla f} & \O(d,0)\dar\rar & 0 \\
        0\rar & T_{\PP^2\times\PP^2}(-\log \widetilde{C})\big|_{X_1}\dar\rar & T_{\PP^2\times\PP^2}\big|_{X_1}\dar\rar & \O_{\widetilde{C}}(\widetilde{C})\big|_{X_1}\dar\rar & 0\\
        & 0 & 0 & 0 &
    \end{tikzcd}
    \]
    As a result, $T_{\PP^2\times\PP^2}(-\log \widetilde{C})\big|_{X_1} \cong \ker(\nabla f)/(0\oplus\O)$.
    The composition 
    \[
        0\oplus\O \longrightarrow \O\oplus\O \longrightarrow \O(1,0)^3 \oplus \O(0,1)^3,
    \]
    and thus the inclusion $0\oplus\O\hookrightarrow\ker(\nabla f)$, is defined by the matrix $\begin{bmatrix}0 & 0 & 0 & a_0 & a_1 & a_2\end{bmatrix}^T$.

    The Jacobian of $g$ defines a map $\nabla g: \O(1,0)^3 \oplus \O(0,1)^3\to \O(d-1,1)$ that factors through $T_{\PP^2\times\PP^2}\big|_{X_1}$.
    Together with \eqref{ses:normal T_{X_1}}, this forms a commutative diagram.
    \[\begin{tikzcd}
         & & 0 \dar & & \\
         & & \O\oplus\O \dar & & \\
         & & \O(1,0)^3\oplus\O(0,1)^3 \dar\rar{\nabla g} & \O(d-1,1)\rar\dar[equal] & 0\\
         0 \rar & T_{X_1}\rar & T_{\PP^2 \times\PP^2}\big|_{X_1} \dar\rar & \O(d-1,1) \rar & 0\\
         & & 0 & &
    \end{tikzcd}\]
    From this diagram, it follows that $T_{X_1}\cong \ker(\nabla g)/(\O\oplus\O)$.
    On the other hand, $\ker(\nabla f) \cap (\O\oplus 0) = 0$ and so
    \begin{align*}
        (\ker(\nabla f) \cap \ker(\nabla g))/(0\oplus\O) \cong (\ker(\nabla f)/(0\oplus\O)) \cap (\ker(\nabla g)/(\O\oplus\O)) \\
        \subseteq (\O(1,0)^3 \oplus \O(0,1)^3)/(\O\oplus\O).
    \end{align*}
    That is,
    \begin{equation}\label{iso: intersection ker nabla}
        (\ker(\nabla f) \cap \ker(\nabla g))/(0\oplus\O) \cong T_{\PP^2\times\PP^2}(-\log\widetilde{C})\big|_{X_1} \cap T_{X_1} \cong T_{X_1}(-\log\pi_1^*C).
    \end{equation}

    The map
    \[
        \nabla f\oplus \nabla g: \O(1,0)^3\oplus\O(0,1)^3 \longrightarrow \O(d,0)\oplus\O(d-1,1)
    \]
    is defined by the matrix
    \[\begin{bmatrix}
        \frac{\partial f}{\partial x_0} & \frac{\partial f}{\partial x_1} & \frac{\partial f}{\partial x_2} & 0 & 0 & 0\\[5pt]
        \frac{\partial g}{\partial x_0} & \frac{\partial g}{\partial x_1} & \frac{\partial g}{\partial x_2} & \frac{\partial g}{\partial a_0} & \frac{\partial g}{\partial a_1} & \frac{\partial g}{\partial a_2}
    \end{bmatrix}
    =
    \begin{bmatrix}
        \frac{\partial f}{\partial x_0} & \frac{\partial f}{\partial x_1} & \frac{\partial f}{\partial x_2} & 0 & 0 & 0\\[5pt]
        \frac{\partial g}{\partial x_0} & \frac{\partial g}{\partial x_1} & \frac{\partial g}{\partial x_2} & \frac{\partial f}{\partial x_0} & \frac{\partial f}{\partial x_1} & \frac{\partial f}{\partial x_2}
    \end{bmatrix}
    \]
    and we have the isomorphism $\ker(\nabla f\oplus \nabla g) \cong \ker(\nabla f) \cap \ker(\nabla g)$.
    Denoting $\K = \ker(\nabla f\oplus \nabla g)$, the isomorphism in \eqref{iso: intersection ker nabla} yields the short exact sequence
    \begin{equation}\label{ses:TX1(-log)}
    \begin{tikzcd}
        0 \rar & \O \arrow{rrr}{[0 \ 0 \ 0 \ a_0 \ a_1 \ a_2]^T} & & & \K \rar & T_{X_1}(-\log\pi_1^*C) \rar & 0.
    \end{tikzcd}
    \end{equation}

    We proceed to prove part (b).
    Via projection, we have the following map of short exact sequences.
    \begin{equation}\label{ses: projection for K}
    \begin{tikzcd}
        0 \rar & \mathcal{K} \ar[d]\rar & \O(1,0)^3 \oplus \O(0,1)^3 \ar[d]\rar & \O(d,0) \oplus \O(d-1,1) \ar[d] \rar & 0 \\
        0 \rar & \pi_1^*T_{\PP^2}(-\log C) \rar & \O(1,0)^3 \rar & \O(d,0) \rar & 0 \\
    \end{tikzcd}
    \end{equation}
    Recall that $\O(1,-1)\subset \pi_1^*T_{\PP^2}(-\log C)$. 
    The preimage $\mathcal{K}'$ of $\O(1,-1)$ in $\mathcal{K}$ is, at least a priori, the kernel of the composition 
    \[
        \O(1,-1) \oplus \O(0,1)^3 \longrightarrow \O(1,0)^3 \oplus \O(0,1)^3 \longrightarrow \O(d,0) \oplus \O(d-1,1).
    \]
    However, from the second row in \eqref{ses: projection for K}, the part landing in the $\O(d,0)$ summand is always zero.
    It follows that $\K'$ is defined by the sequence
    \[\begin{tikzcd}
        0 \rar & \mathcal{K}' \rar & \O(1,-1) \oplus \O(0,1)^3 \rar & \O(d-1,1) \rar & 0.
    \end{tikzcd}\]
    The map $\O(1,-1) \to \O(d-1,1)$ is simply the composition
    \[
        \O(1,-1)\xrightarrow{[a_0 \ a_1 \ a_2]^T}\O(1,0)^3 \xrightarrow{[\frac{\partial g}{\partial x_0} \ \frac{\partial g}{\partial x_1} \ \frac{\partial g}{\partial x_2}]} \O(d-1,1).
    \]
    In other words, it is the map defined by $\sum a_i \frac{\partial g}{\partial x_i}$. 
    Since $V_1 \subset T_{X_1}(-\log \pi_1^*C) = \K / \O$ is the preimage of the universal sub-bundle and the natural map $\O \to \mathcal{K}$ lands in $\mathcal{K}'$, it follows that $V_1$ will be $\mathcal{K}' / \O$, as claimed.
\end{proof}

\begin{cor}\label{V1 as maps of modules}
    Let $X_1 = \Proj(S) \subset \PP^2\times\PP^2$ and let $K'$ be the kernel of the map $S(1,-1)\oplus S(0,1)^3 \to S(d-1,1)$ defined by the matrix
    \[
        \begin{bmatrix}
            \sum a_i\frac{\partial g}{\partial x_i} & \frac{\partial f}{\partial x_0} & \frac{\partial f}{\partial x_1} & \frac{\partial f}{\partial x_2}
        \end{bmatrix}.
    \]
    There is an $S$-module isomorphism
    \[
        \bigoplus_{\substack{(p,q)\in\Z^2\\q\geq -1}} H^0\big(X_1,V_1\otimes\O(p,q)\big) \cong \coker(S\xrightarrow{[0 \ a_0 \ a_1 \ a_2]^T} K')
    \]
\end{cor}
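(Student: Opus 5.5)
We take global sections of the sheaf-level presentation in \Cref{TX1log as maps of line bundles}(b), twisted by $\O(p,q)$, and compare it term by term with the module-level sequences over $S$, one bidegree at a time.

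\emph{Comparing $K'$ with $\mathcal{K}'$.} Define $K'$ by the exact sequence of bigraded $S$-modules
\[
0\to K'\to S(1,-1)\oplus S(0,1)^3\to S(d-1,1).
\]
Twisting the sheaf sequence defining $\mathcal{K}'$ by $\O(p,q)$ and taking $H^0$ gives
\[
0\to H^0(\mathcal{K}'(p,q))\to H^0(\O(p+1,q-1))\oplus H^0(\O(p,q+1))^3\to H^0(\O(p+d-1,q+1)).
\]
The hypersurface $X_1$ is arithmetically Cohen–Macaulay in $\PP^2\times\PP^2$. Using the sequence $0\to\O_Y(-d+1,-1)\to\O_Y\to\O_{X_1}\to 0$ together with K\"unneth, the natural map $S_{(p,q)}\to H^0(X_1,\O(p,q))$ is an isomorphism whenever $H^1(Y,\O_Y(p-d+1,q-1))=0$. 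This vanishing always holds, because $H^1$ of any line bundle on $\PP^2\times\PP^2$ is zero. Hence $S_{(p,q)}\cong H^0(\O(p,q))$ for all $(p,q)$, so the three middle terms agree with the corresponding graded pieces of $S$-modules. Since kernels commute with this identification, we get $K'_{(p,q)}\cong H^0(X_1,\mathcal{K}'(p,q))$.

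\emph{Passing to the cokernel.} Twist $0\to\O\to\mathcal{K}'\to V_1\to0$ by $\O(p,q)$ and take cohomology:
\[
0\to S_{(p,q)}\to K'_{(p,q)}\to H^0(V_1(p,q))\to H^1(\O(p,q))\xrightarrow{\ \beta\ } H^1(\mathcal{K}'(p,q)).
\]
Surjectivity of $K'_{(p,q)}\to H^0(V_1(p,q))$ is equivalent to injectivity of $\beta$. This is the main obstacle, and the point where $q\ge -1$ and \Cref{lemma: injectivity of H1 map} enter.

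\emph{Injectivity of $\beta$.} Compose $\O\to\mathcal{K}'$ with the inclusion $\mathcal{K}'\hookrightarrow \O(1,-1)\oplus\O(0,1)^3$; the composite is $[0\ a_0\ a_1\ a_2]^T$. Then $\beta$ followed by
\[
H^1(\mathcal{K}'(p,q))\to H^1(\O(p+1,q-1))\oplus H^1(\O(p,q+1))^3
\]
has as its $i$-th component in the last three summands the map induced by multiplication by $a_i$ from $\O(p,q)$ to $\O(p,q+1)$. By \Cref{lemma: injectivity of H1 map}, each component is injective when $q\ge -1$. So the composite is injective, and therefore $\beta$ is injective.

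Consequently, for every $(p,q)$ with $q\ge -1$,
\[
H^0(V_1(p,q))\cong K'_{(p,q)}/a\cdot S_{(p,q)} = \coker\big(S\to K'\big)_{(p,q)}.
\]
These isomorphisms come from natural maps of sheaves and global sections, so they are compatible with multiplication by elements of $S$. Summing over all $(p,q)$ with $q\ge-1$ gives the stated $S$-module isomorphism, with the right-hand side read in those bidegrees. This restriction is harmless for the intended application because of \Cref{lemma:sections of V1}.
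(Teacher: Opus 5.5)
Your proposal is correct and follows the paper's proof step for step: you identify $K'_{(p,q)}$ with $H^0(X_1,\K'(p,q))$, take the long exact sequence of $0\to\O\to\K'\to V_1\to 0$, and get injectivity of the map on $H^1$ by composing with the inclusion into $\O(1,-1)\oplus\O(0,1)^3$ and invoking \Cref{lemma: injectivity of H1 map} for $q\geq -1$. Your Künneth argument for $S_{(p,q)}\cong H^0(X_1,\O(p,q))$ justifies a point the paper only asserts. Your closing caveat is unnecessary: $K'_{(p,q)}=0$ whenever $q\leq -2$, because both $S_{(p+1,q-1)}$ and $S_{(p,q+1)}$ vanish there, so the whole cokernel already lives in bidegrees with $q\geq -1$.
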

\begin{proof}
    Note that for all $(p,q)\in\Z^2$, $H^0(X_1,\O(p,q)) = S_{(p,q)}$.
    Hence, the short exact sequence
    \[\begin{tikzcd}
        0\rar & \K' \rar & \O(1,-1)\oplus\O(0,1)^3 \rar & \O(d-1,1) \rar & 0
    \end{tikzcd}\]
    and the left exactness of the global sections functor give the $S$-module isomorphism
    \[
        H^0(X_1,\K'(p,q)) \cong K'_{(p,q)}.
    \]

    Consider the short exact sequence
    \[\begin{tikzcd}
        0 \rar & \O \rar & \K' \rar & V_1 \rar & 0.
    \end{tikzcd}\]
    Twisting and taking higher cohomology we obtain the following exact sequence.
    \[\begin{tikzcd}
        0\rar & S_{(p,q)} \rar & K'_{(p,q)} \rar & H^0(X_1,V_1(p,q)) \dar{\delta} \\
        & & & H^1(X_1,\O(p,q)) \rar{\alpha} & H^1(\K'(p,q)) \rar & \cdots
    \end{tikzcd}\]

    Since 
    \[
        \big(\coker(S_{(p,q)}\to K'_{(p,q)})\big)_{(p,q)} \cong \big(\coker(S\to K')\big)_{(p,q)},
    \]
    we are left to prove $\im(\delta) = 0$, or equivalently $\ker(\alpha) = 0$, for all $(p,q)\in\Z^2$ with $q\geq -1$.

    From \Cref{TX1log as maps of line bundles}, the composition $\O\to\K'\to\O(1,-1)\oplus\O(0,1)^3$ is defined by the matrix $[0 \ a_0 \ a_1 \ a_2]^T$.
    This map induces the commutative diagram below.
    \[\begin{tikzcd}
        H^1(X_1,\O(p,q)) \rar\dar{\alpha} & H^1(X_1,\O(p+1,q-1))\oplus H^1(X_1,\O(p,q+1))^3\\
        H^1(X_1,\K'(p,q)) \ar[ur]
    \end{tikzcd}\]
    Since $q\geq -1$, the top map is injective by \Cref{lemma: injectivity of H1 map}.
    It follows that 
    \[\begin{tikzcd}
        0\rar & S_{(p,q)} \rar & K'_{(p,q)} \rar & H^0(X_1,V_1(p,q))\rar & 0
    \end{tikzcd}\]
    is exact.
    Since $H^0(X_1,\O(p,q)) = S_{(p,q)}$ has nonzero sections for $p$ and $q$ nonnegative, the degree-wise isomorphisms are compatible with the $S$-action.
    The result follows.
\end{proof}

It is worth noting that the only sections of $V_1(p,q)$ that are not captured by the corresponding graded piece of the $S$-module $\coker(S\to K')$ are those in degree $(p,-2)$ with $p \geq d-1$.
These correspond to sections of the line bundle $\O_{X_2}(\xi_2 - \xi_1 + (p-d+1)H)$, which do not provide any restriction on lifts of entire curves.


\subsection{The Computational Test and Its Implementation}

Here we provide a Macaulay2 script for computing the $S$-module $\coker(S\to K')$.
By \Cref{lemma:sections of V1} together with \Cref{V1 as maps of modules}, this module contains the entire family of global sections in \eqref{iso: interesting sections}.
As with all modules, Macaulay2 represents it in terms of generators and relations.
Hence, we will translate \Cref{test theorem} using \Cref{lemma:sections of V1} to test for {\GGL} based on a minimal set of homogeneous generators for $\coker(S\to K')$.

\begin{defn}
    Let $S$ be the homogeneous coordinate ring of $X_1$.
    \begin{enumerate}
        \item With the same notation as in \Cref{V1 as maps of modules}, denote by 
        \[
            M_V = \coker(S\xrightarrow{[0 \ a_0 \ a_1 \ a_2]^T} K').
        \]
        \item Given an element $\sigma\in (M_V)_{(p,q)}$ with $p+q\leq d-4$, denote by $Z_\sigma$ the closed subset of $X_2$ that is the vanishing locus of the section in $H^0(X_2,\O(\xi_2+(q+1)\xi_1 - (d-3-p-q)H))$ corresponding to $\sigma$ (cf. \Cref{lemma:sections of V1}).
    \end{enumerate}
\end{defn}
\begin{lemma}\label{lemma: Zalpha irreducible}
    If $p+q\leq d-4$ and $\sigma\in(M_V)_{(p,q)}$ is part of a set of minimal homogeneous generators for $M_V$, then $Z_\sigma \subset X_2$ is irreducible. 
\end{lemma}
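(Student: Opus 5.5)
The plan is to argue by contradiction: a reducible $Z_\sigma$ should force $\sigma$ to factor as a product of a positive-degree element of $S$ and another element of $M_V$, contradicting minimality.

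Let $a = d-3-p-q > 0$. By \Cref{lemma:sections of V1}, $\sigma$ corresponds to a nonzero section $s_\sigma$ of $\O_{X_2}(\xi_2+(q+1)\xi_1-aH)$, and $Z_\sigma$ is its divisor of zeros. Decompose this divisor into irreducible components with multiplicities. Each component different from $\Gamma_2$ has class $c_i\xi_2+b_i\xi_1+a_iH$ with $b_i\ge 2c_i\ge 0$ by \Cref{lemma:Demailly3.3}(d), and $\Gamma_2$ has class $\xi_2-\xi_1$. The $\xi_2$-coefficients are nonnegative and add up to $1$, so exactly one component with multiplicity one contributes $\xi_2$. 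The others have $c_i=0$, so they are vertical over $X_1$, i.e.\ of the form $\pi_2^*D_i$ for effective divisors $D_i$ on $X_1$.

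First I rule out the case where the $\xi_2$-component is $\Gamma_2$. In that case $s_\sigma$ vanishes on $\Gamma_2$. Since $\O_{X_2}(\Gamma_2)=\O(\xi_2-\xi_1)$, the section $s_\sigma$ comes from $H^0(X_2,\pi_2^*\O_{X_1}((q+2)\xi_1-aH)) = H^0(X_1,\O((q+2)\xi_1-aH))$. This group vanishes by \Cref{thm:eff_cone} because $a>0$. Hence $Z_\sigma = Z_1 + \pi_2^*D$, where $Z_1\neq\Gamma_2$ is irreducible with $\xi_2$-coefficient $1$ and $D$ is an effective divisor on $X_1$. The claim reduces to showing $D=0$.

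Suppose $D\ne 0$. By \Cref{thm:eff_cone}, $D$ is cut out by some $s\in S_{(r,t)}$ with $r,t\ge 0$ and $(r,t)\neq(0,0)$. Since $X_2$ is smooth and projective, $s_\sigma/\pi_2^*s$ is a regular section with divisor $Z_1$ of the line bundle $\O_{X_2}(\xi_2+(q-t+1)\xi_1-a'H)$, where $a' = d-3-(p-r)-(q-t) \ge a > 0$. Under \Cref{lemma:sections of V1}, this section is an element $\tau\in H^0(X_1,V_1(p-r,q-t))$. Since $(p-r)+(q-t)\le d-4$, the vanishing claim proved inside \Cref{lemma:sections of V1} gives $q-t\ge -1$, since otherwise $H^0(V_1(p-r,q-t))=0$. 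Thus \Cref{V1 as maps of modules} places $\tau$ in $(M_V)_{(p-r,q-t)}$. Because the isomorphism there is one of $S$-modules, we get $\sigma = s\tau$ up to a nonzero scalar. Then $\sigma$ lies in $\mathfrak m M_V$, where $\mathfrak m$ is the irrelevant ideal generated by the $x_i$ and $a_j$. By graded Nakayama, $\sigma$ cannot belong to a minimal homogeneous generating set, a contradiction. Hence $D=0$ and $Z_\sigma=Z_1$ is irreducible (indeed reduced).

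I expect the main obstacle to be the bookkeeping in the last paragraph. One needs the translation between $V_1(p,q)$ and twists of $V_1^\vee$, and in particular the identity $\O_{X_1}(r,t) = \O(t\xi_1+(r+t)H)$, to match the divisor factorization with multiplication by $s$ in $M_V$. One must also check that the quotient $\tau$ stays in the range $q\ge -1$ that \Cref{V1 as maps of modules} covers. I would verify this carefully using the twisted sequence \eqref{ses: def V1 bigraded} exactly as in the proof of \Cref{lemma:sections of V1}.
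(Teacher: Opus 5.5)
Your proof is correct and takes the same route as the paper. You split off a divisor pulled back from $X_1$ and cut out by some $s\in S$ of nonzero bidegree, divide to write $\sigma=s\tau$ with $\tau\in M_V$, and contradict minimality. Your version is more careful than the paper's, which simply asserts the splitting $Z_1\in\lvert\xi_2+\cdots\rvert$, $Z_2\in\lvert r\xi_1+tH\rvert$. The paper leaves implicit three details that you supply: the use of \Cref{lemma:Demailly3.3}(d) to force the $\xi_2$-coefficients to split as $1+0+\cdots+0$, the exclusion of $\Gamma_2$ via \Cref{thm:eff_cone}, and the check $q-t\ge -1$ so that \Cref{V1 as maps of modules} applies to $\tau$.
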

\begin{proof}
    If $Z_\sigma$ is the union of two effective divisors $Z_1 \cup Z_2$, then
    \[
        Z_1 \in \big\lvert\xi_2 + (q+1-r)\xi_1 - (d-3-p-q+t)H\big\rvert \qquad\text{and}\qquad Z_2 \in \big\lvert r\xi_1 + tH\big\rvert
    \]
    where $(r,t)\neq (0,0)$.
    Hence, there must be another element $\sigma_0\in M_V$ and a degree $(t-r,r)$ element $s\in S$ such that $\sigma = s\sigma_0$, contradicting the fact that $\sigma$ is a minimal generator.
\end{proof}

Translating by duality the conditions of \Cref{test theorem}, we obtain a computational test to confirm the {\GGL} Conjecture for the complement of a smooth plane curve based on a minimal set of homogeneous generators for $M_V$.

\begin{thm}[Equivalent to \Cref{test theorem}]\label{Module test}
    Let $f\in\C[x_0,x_1,x_2]$ be a homogeneous polynomial of degree at least $4$ defining a smooth plane curve $C \subset \PP^2$.
    Let $n$ be the number of homogeneous generators of total degree $p+q\leq d-4$ in a minimal homogeneous generating set for $M_V$.
    Then $\PP^2\setminus C$ satisfies {\GGL} if one of the following conditions is satisfied:
    \begin{enumerate}
        \item[(I)] $n=0$ and $d\geq 15$.
        \item[(II)] $n=1$, $d\geq 11$, and the only minimal homogeneous generator of total degree $p+q\leq d-4$ satisfies the inequality
        \begin{equation}\label{ineq:module_too_negative_section}
            \frac{d-3-(p+q)}{q+2} < \frac{4d^2-51d+90}{12(d-3)}.
        \end{equation}
        \item[(III)] $n\geq 2$ and $d\geq 4$. 
    \end{enumerate}
\end{thm}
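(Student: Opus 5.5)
The plan is to translate each condition of \Cref{Module test} into the corresponding condition of \Cref{test theorem}, using the dictionary from \Cref{lemma:sections of V1} and \Cref{V1 as maps of modules}. First record the dictionary. An element $\sigma\in(M_V)_{(p,q)}$ with $q\geq -1$ is a section of $V_1(p,q)\cong V_1^\vee(p-d+2,q+1)$, which is a section of
\[
    \O_{X_2}\big(\xi_2+(q+1)\xi_1-(d-3-p-q)H\big).
\]
So it lives in $H^0(X_2,\O(\xi_2+b\xi_1-aH))$ with $b=q+1$ and $a=d-3-p-q$. The condition $a>0$ is exactly $p+q\leq d-4$. The condition $b>0$ is automatic: \Cref{lemma:sections of V1} shows there are no such sections with $q<-1$, and for $q=-1$ the class $\xi_2-aH$ is not relatively big (\Cref{lemma:Demailly3.3}(a)). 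I would check that a nonzero section in that case cannot occur, either by the argument in \Cref{prop:ZcupGamma} or by using that $\xi_2-aH$ restricted to $\Gamma_2$ is negative. Conversely, by \Cref{V1 as maps of modules}, every section with $a,b>0$ appears in $M_V$ in degree $(p,q)=(d-2-a-b,\,b-1)$. Finally, $\frac{a}{b+1}$ becomes $\frac{d-3-(p+q)}{q+2}$, which matches \eqref{ineq:module_too_negative_section}.

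Next, relate the graded pieces to generators. The homogeneous elements of $M_V$ in total degree $p+q\leq d-4$ are $S$-combinations of generators of total degree $\leq p+q$. Every nonzero homogeneous element of $S$ has bidegree $(p',q')$ with $p',q'\geq 0$, by \Cref{thm:eff_cone} in bigraded form, so multiplying by $S$ never lowers total degree. Hence if $n=0$, every space $H^0(X_2,\O(\xi_2+b\xi_1-aH))$ with $a,b>0$ vanishes, and (I) follows from \Cref{test theorem}(I).

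If $n=1$, the unique generator $\sigma$ has $Z_\sigma$ irreducible by \Cref{lemma: Zalpha irreducible}. Together with the inequality, \Cref{test theorem}(II) applies.

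If $n\geq 2$, take two distinct minimal generators $\sigma_1,\sigma_2$. Both $Z_{\sigma_i}$ are irreducible divisors of class $\xi_2+\cdots$ by \Cref{lemma: Zalpha irreducible}. Suppose $Z_{\sigma_1}=Z_{\sigma_2}$. Then the classes agree, so the bidegrees agree and $\sigma_1,\sigma_2$ are sections of the same line bundle with the same divisor. They are therefore proportional, which contradicts minimality. Thus they do not vanish along a common divisor, and \Cref{test theorem}(III) gives the claim.

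The main obstacle is the $n\geq 2$ case. One subtlety is that two minimal generators might have different bidegrees but the same irreducible zero divisor. This is impossible, since equal divisors force equal classes in $\pic(X_2)$, and $\xi_1,\xi_2,H$ are independent there. A second subtlety is that a generator must give a genuinely nonzero section. This holds because the identification in \Cref{V1 as maps of modules} is an isomorphism in degrees with $q\geq -1$.
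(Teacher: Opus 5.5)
Your proposal is correct and is essentially the paper's proof. Like the paper, you translate each condition through \Cref{lemma:sections of V1} and \Cref{V1 as maps of modules} and invoke \Cref{lemma: Zalpha irreducible} for (II) and (III); your proportionality argument just makes explicit why two distinct minimal generators cannot share their irreducible zero divisor. The one slip is the side claim that $\xi_2-aH$ fails to be relatively big: by \Cref{lemma:Demailly3.3}(a) the class $\xi_2$ is relatively big, and failure of relative bigness would not rule out sections anyway. This is harmless, because your $\Gamma_2$ alternative works. So does \eqref{ses: def V1 bigraded} with $q=-1$, whose outer terms $\O(p-d+1,1)$ and $\O(p+1,-2)$ have no sections when $p\leq d-3$.
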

\begin{proof}
    If (I) holds, then
    \[
        H^0\big(X_2,\O(\xi_2 + (q+1)\xi_1 - (d-3-p-q)H)\big) = 0 
    \]
    for $q+1 > 0$ and $d-3-p-q > 0$, which is equivalent to criterion (I) in \Cref{test theorem}.
    If (II) holds, then we get a global section of $\O(\xi_2 + (q+1)\xi_1 - (d-3-p-q)H)$ satisfying the inequality \eqref{ineq:negative_section}, which is exactly criterion (II) in \Cref{test theorem}.
    Finally, if (III) holds, then we have two negatively twisted logarithmic invariant 2-jet differentials.
    By \Cref{lemma: Zalpha irreducible}, they do not vanish along a common divisor in $X_2$, and this is criterion (III) in \Cref{test theorem}.
\end{proof}

For degree $d\geq 15$ our test in \Cref{Module test} is inconclusive only in the case where $M_V$ has only one minimal homogeneous generator of total degree at most $d-4$ and the degree of this generator does not satisfy the inequality \eqref{ineq:module_too_negative_section}.
Moreover, we know from \Cref{prop:m>=6} that such a generator must be in degree $(p,q)$ with $q \leq 3$.
As such, one can reduce {\GGL} for the complements of smooth plane curves of degree at least 15 to the study of those belonging to the aforementioned case.
For $d < 15$, one must also consider the case in which $M_V$ has no minimal generators of degree at most $d-4$, i.e., the case $n = 0$, with $n$ is as in \Cref{Module test}.

Using \Cref{V1 as maps of modules}, we can implement a Macaulay2 script that explicitly computes a minimal homogeneous generating set for $M_V$ given a homogeneous polynomial $f$ defining the curve $C$.
Below is the Macaulay2 code with $f = x_0^{16} + x_1^{16} + x_2^{16}$, the Fermat curve of degree $d = 16$, which prints the minimal homogeneous generators of $M_V$ together with their bidegrees.
To perform the computation for another plane curve, one only needs to modify the definitions of \texttt{d} and \texttt{f} in Listing 1 below.

\begin{lstlisting}[caption={Macaulay2 code for computing the module $M_V$.}, label={code:MV}]
R = QQ[x_0..x_2,a_0..a_2, Degrees => {3:{1,0}, 3:{0,1}}];
d = 16;
f = x_0^d + x_1^d + x_2^d;

-- Equation of X_1 inside P^2 x P^2:
g = sum apply(3, i -> a_i * diff(x_i,f));

-- First entry of the matrix in Corollary 5.4
Sigma = sum apply(3, i -> a_i * diff(x_i,g));

S = R/ideal(g);

F1 = S^{{1,-1}, 3:{0,1}};
F0 = S^{{d-1,1}};

phi = map( 
    F0, 
    F1,
    sub(matrix{{Sigma, diff(x_0,f), diff(x_1,f), diff(x_2,f)}},  S)
);

-- Definition of K' (Corollary 5.4)
Kprime = ker phi;

-- Definition of the tautological section e = (0,a_0,a_1,a_2)^T
alpha = map(
    F1, 
    S^{{0,0}},
    sub(transpose matrix{{0,a_0,a_1,a_2}}, S)
);
eS = image alpha;

-- Definition of M_V.
MV = trim (Kprime/eS);

G = mingens MV;

print "Degree of minimal generators of M_V:";
print degrees source G;
print "";

-- Retrieving index of low-degree generators
lowIndices = select(
    toList(0..numColumns(G)-1),
    i -> (
        dd := degree(G_i);
        dd#0 + dd#1 <= d-4
    )
);

print("Minimal generators of degree at most d-4 = " | toString(d-4) | ":")
scan(lowIndices, 
    i -> (
        print (degree G_i); 
        print(G_i);)
);

\end{lstlisting}

\begin{lstlisting}[style=terminal, caption={Output of the Macaulay2 computation in Listing 1}, label={code:output}]
Degree of minimal generators of M_V:
{{2, 1}, {15, -1}, {15, -1}, {15, -1}, {14, 1}, {14, 1}, {14, 1}, {14, 3}, {14, 3}, {14, 3}}

Minimal generators of degree at most d-4 = 12:
{2, 1}
|           x_0x_1x_2          |
|               0              |
| 15x_1x_2a_0a_1-15x_0x_2a_1^2 |
| 15x_1x_2a_0a_2-15x_0x_1a_2^2 |
\end{lstlisting}

In the case of the Fermat curve of degree $d = 16$, the minimal generators of total degree at most $d-4 = 12$ are the ones corresponding to invariant logarithmic 2-jet differentials and thus sections that restrict the stable base locus $B_2\subset X_2$. 
As shown in Listing 2, the only such generator is the one of degree $(2,1)$, which gives rise to a global section of 
\[
    H^0(X_2, \O(\xi_2 + 2\xi_1 - 10H)) \cong H^0(\PP^2, E_{2,3}\Omega_{\PP^2}(\log C)\otimes\O(-10)).
\]
We remark that this generator does not satisfy the inequality in \eqref{ineq:module_too_negative_section}.
As a consequence, the test in \Cref{Module test} is insufficient for concluding {\GGL} for the complement of the Fermat curve.
Nonetheless, it follows from \cite[Theorem 1]{toda1971functional} that the complement of the Fermat curve $C_d\subset \PP^2$ of degree $d$ satisfies {\GGL} for all $d\geq 7$.
In fact, the exceptional locus is the union of $3d$ lines:
\[
    \operatorname{Exc}(\PP^2\setminus C_d) = \bigcup_{\lambda^d = -1}\big(\{x_0 = \lambda x_1\} \cup \{x_0 = \lambda x_2\}\cup \{x_1 = \lambda x_2\}\big).
\]


\section{Examples}\label{sec:examples}
In this section we compile examples of smooth plane curves exhibiting all the possible outcomes in \Cref{Module test} regarding minimal homogeneous generators of $M_V$ in degree at most $d-4$.
The Macaulay2 files used to verify all computational examples in this paper are available in the accompanying repository \url{https://github.com/jazieltorres/jet-differentials-plane-curves}.

\subsection{Vanishing of Low-Degree Sections}
The following polynomials define smooth plane curves whose module $M_V$ has no minimal homogeneous generators of total degree at most $d-4$.
It follows from \Cref{Module test}(I) that the complements of all these curves satisfy {\GGL}. 

\begin{example}
    $f = x_0^{15} + x_1^{15} + x_2^{15} + x_0^5 x_1^5 x_2^5$.
\end{example}
\begin{example}
    $f = x_0^{15} + x_1^{15} + x_2^{15} + x_0x_1x_2(x_0 + x_1 + x_2)^{12}$.
\end{example}
\begin{example}
    $f = x_0^{17} + 2x_1^{17} + 3x_2^{17} + 13x_0^6x_1^5x_2^6$.
\end{example}
\begin{example}
    $f = x_0^{14}x_1 + x_0x_2^{14} + x_1^{14}x_2 + x_0^5 x_1^5 x_2^5$.
\end{example}

We want to emphasize that the conclusion that the complements of these curves satisfy {\GGL} is completely theoretical, based on a Riemann-Roch existence argument (cf. \Cref{prop:restriction_is_big}).
We lack any information about the multifoliation on $\PP^2\setminus C$ that allows us to conclude {\GGL}, let alone information about the exceptional locus of the complement $\PP^2\setminus C$.

\subsection{A Single Low-Degree Section}
The following polynomials define smooth plane curves whose module $M_V$ has only one generator of total degree at most $d-4$ and this generator satisfies the inequality \eqref{ineq:module_too_negative_section}.
It follows from \Cref{Module test}(II) that the complements of all these curves satisfy {\GGL}.
As in the previous examples, we do not get an explicit description of the exceptional locus.
\begin{example}
    $f = x_0^{13}+x_0^6x_1^7+x_1^{13}+x_1x_2^{12}$.
    The unique minimal generator of $M_V$ with total degree at most $d-4 = 9$ has bidegree $(8,1)$.
    This is the polynomial obtained by taking $n=6$ in the family of \Cref{example:case 3(2)} below (which assumes $n \geq 7$).
\end{example}
\begin{example}
    $f = x_0^{13}+x_0^6x_1^7+x_1^{13}+x_1x_2^{12}+x_2^{13}$.
    The unique minimal generator of $M_V$ with total degree at most $d-4 = 9$ has bidegree $(8,1)$.
\end{example}

\begin{example}
    $f = x_0^{18} + x_1^{18} + x_2^{18} + x_0^9x_1^9 + x_0^{12}x_1^4x_2^2$.
    The unique minimal generator of $M_V$ with total degree at most $d-4 = 14$ has bidegree $(11,1)$.
\end{example}

\subsection{Multiple Low-Degree Sections and Exceptional Loci}
The polynomials listed below define smooth plane curves for which the module $M_V$ has at least two minimal homogeneous generators of total degree at most $d-4$.
Consequently, for these examples the stable base locus
\[
    \widetilde{Z} = \bigcap_{(b,a)\in\N^2} \bs\big\lvert\xi_2 + b\xi_1 - aH \big\rvert \quad \subset\quad X_2
\]
has codimension at least 2.

Let $Z = \pi_2(\widetilde{Z})$ denote the projection of $\widetilde{Z}$ to $X_1$.
This closed subset has codimension at least 1 in $X_1$ and coincides with the ``parallel locus'' of the generators of degree at most $d-4$.
More explicitly, each generator of 
\[
    M_V = K'/eS, \qquad e = \begin{bmatrix}
        0\\ a_0\\ a_1\\ a_2
    \end{bmatrix},
\]
is a class represented as a four-tuple in $K'$, with entries in the homogeneous coordinate ring $S$ of $X_1$.
Hence, if $\sigma_1,\dots, \sigma_k \in K'$ ($k\geq 2$) are representatives for minimal homogeneous generator classes $\overline{\sigma_i}$ of $M_V$ in degree at most $d-4$, then $Z$ is the common zero locus of
\[
    \overline{\sigma_i}\wedge\overline{\sigma_j} = 0 \quad \text{in } \bigwedge^2 M_V \qquad  \text{for } 1 \leq i < j \leq k.
\]
This can be computed as the vanishing of the $3 \times 3$ minors of the matrix $[\sigma_1,\dots, \sigma_k, e]$.

The foliation on $Z$ is defined by the restriction $\O(-\xi_1)\big|_Z$. 
Assuming $Z$ dominates $\PP^2$, the associated multifoliation on $\PP^2$ can be explicitly described as follows: at a point $[x_0:x_1:x_2]\in\PP^2$, the direction corresponding to $a_0\frac{\partial}{\partial x_0}+a_1\frac{\partial}{\partial x_1}+a_2\frac{\partial}{\partial x_2}$ is permitted if and only if $([x_0:x_1:x_2], [a_0:a_1:a_2]) \in Z$.

To determine the leaves of the multifoliation on $\PP^2$, we must find the associated 1-form that vanishes on the distribution defined by $\O(-\xi_1)\big|_Z$. 
Since $T_{\PP^2}(-\log C) \subset T_{\PP^2}$, we have the inclusion $\O(-\xi_1) = \O(1,-1) \hookrightarrow \pi_1^* T_{\PP^2}$ and thus the distribution is defined by a global section in $H^0(Z,\pi_1^* T_{\PP^2} \otimes\O(-1,1)\big|_Z)$. 
Utilizing the canonical isomorphism $T_{\PP^2} \cong \Omega_{\PP^2}^1\otimes\O_{\PP^2}(3)$, we find that the multifoliation on $\PP^2$ is defined by a 1-form $\omega\in H^0(Z,\pi_1^*\Omega_{\PP^2}^1\otimes\O(2,1)\big|_Z)$.
In coordinates, we can write this as $\omega = A_0dx_0 + A_1dx_1 + A_2dx_2$, where the coefficients $A_i\in H^0(Z,\O(1,1)\big|_Z)$.
For $\omega$ to be a well-defined 1-form pulled back from the base and give rise to the multifoliation associated with $\O(-\xi_1)\big|_Z$, it must vanish along the two vector fields $\sum x_i\frac{\partial}{\partial x_i}$ and $\sum a_i\frac{\partial}{\partial x_i}$. 
In other words, $\omega$ must kill tangent vectors represented by vectors in $\C^3$ spanned by $\mathbf{x}=(x_0,x_1,x_2)$ and $\mathbf{a} = (a_0,a_1,a_2)$.
This uniquely determines $\omega$ up to scaling as 
\[
    \omega = (\mathbf{x} \times \mathbf{a}) \cdot \mathbf{dx}  = (x_1a_2 - x_2a_1)dx_0 + (x_2a_0-x_0a_2)dx_1 + (x_0a_1-x_1a_0)dx_2.
\]

Throughout the remainder of this subsection, we assume that $Z$ is a surface of pure codimension 1 in $X_1$ that dominates $\PP^2$, as is the case for all the examples considered here.
Since $Z$ is a Cartier divisor, it is defined by a global section in $H^0(X_1,\O(p,q))\cong S_{(p,q)}$ and thus represented by a bihomogeneous element of $S$. 
Let $h\in\C[x_0,x_1,x_2,a_0,a_1,a_2]$ denote a lift of this element to the homogeneous coordinate ring of $\PP^2\times\PP^2$, such that the ideal of $Z$ is generated by $(g,h)$.

To find the leaves of the multifoliation on $\PP^2$ we consider each of the irreducible components $Z_i = V(g,h_i)$, where $h_i$ is a prime factor of $h$.
If the component $Z_i$ does not dominate $\PP^2$, then we check if the projection to $\PP^2$ is hyperbolic in $\PP^2\setminus C$ by counting the intersection points with $C$.
If the component $Z_i$ dominates $\PP^2$, we consider three special cases.
\begin{description}
    \item[Case 1] Assume there is a homogeneous rational function $q(\mathbf{x})$ of degree 0 such that $\nabla q \cdot \mathbf{a}$ is a multiple of $h_i$.
    Then, the foliation on $\PP^2$ corresponding to the component $Z_i$ is defined by the vanishing of the differential $dq$.
    Indeed, the differential $dq$ evaluated at a tangent vector represented by a combination $\alpha\mathbf{x} + \beta\mathbf{a}$ is
    \[
        \nabla q \cdot (\alpha\mathbf{x} + \beta\mathbf{a}) = 0
    \]
    given that $\nabla q\cdot \mathbf{x} = 0$ by Euler's identity, and $\nabla q\cdot \mathbf{a} = 0$ because $h_i = 0$.
    Therefore, writing $q = F/G$, the leaves of the foliation are the pencil of curves $sF + tG = 0$, $[s:t]\in\PP^1$.
    \item[Case 2] Assume there is a homogeneous polynomial $q(\mathbf{x})$, of the same degree as $f$, for which $\nabla q \cdot \mathbf{a}$ is a multiple of $h_i$.
    Then, the meromorphic 1-form $d(q/f)$ evaluated at a tangent vector represented by a combination $\alpha\mathbf{x} + \beta\mathbf{a}$ is
    \[
        d(q/f)(\alpha\mathbf{x} + \beta\mathbf{a}) =  \frac{(\nabla q \cdot (\alpha\mathbf{x} + \beta\mathbf{a}))f - (\nabla f \cdot (\alpha\mathbf{x} + \beta\mathbf{a}))q}{f^2}.
    \]
    Euler's identity cancels the $\alpha$-terms, leaving
    \[
        \frac{\beta((\nabla q \cdot\mathbf{a})f-(\nabla f \cdot \mathbf{a})q)}{f^2}.
    \]
    The last expression is zero because $\nabla q \cdot\mathbf{a}$ is a multiple of $h_i$ and $\nabla f \cdot \mathbf{a} = g$.
    Hence, the foliation on $\PP^2$ associated to the component $Z_i$ is defined by the vanishing of the differential $d(q/f)$ and the leaves form the pencil of curves $sq + tf = 0$ for $[s:t]\in\PP^1$.
    \item[Case 3] Assume $h_i$ can be written as $h_i = E(\mathbf{x}, \mathbf{x}\times\mathbf{a})$, where $E(x_0,x_1,x_2,y_0,y_1,y_2)$ is a bihomogeneous polynomial with degree $k$ in the $y_i$ coordinates.
    Then we can find the leaves of the $k$-web associated to $Z_i$ by replacing $a_i$ with $dx_i$ and integrating the differential equation $E(\mathbf{x}, \mathbf{x}\times d\mathbf{x}) = 0$.
    This is because if we evaluate the symmetric differential $E(\mathbf{x}, \mathbf{x}\times d\mathbf{x})$ at a tangent vector represented by a linear combination $\alpha \mathbf{x} + \beta \mathbf{a}$, we get
    \[
        E(\mathbf{x}, \mathbf{x}\times (\alpha \mathbf{x} + \beta \mathbf{a})) = E(\mathbf{x},\beta \mathbf{x}\times \mathbf{a}) = \beta^kE(\mathbf{x}, \mathbf{x}\times \mathbf{a}) = 0.
    \]
\end{description}

\begin{example}\label{example:case 3(1)}
    Let 
    \[
        f = x_0^{2n} + x_0^nx_1^n + x_1^{2n} + x_1x_2^{2n-1}, \qquad n\geq 7.
    \]
    Note that $d = 2n$.
    The module $M_V$ has at least three minimal generators of degree at most $d-4$; two in degree $(n+2,1)$ and one in degree $(n+2,2)$.
    Formulas for these generators are written in \Cref{sec:Appendix}.
    Their parallel locus is
    \[
        (x_2)(x_1a_0-x_0a_1)(x_2a_1+(2n-1)x_1a_2) = 0.
    \]
    We say that $M_V$ has \textit{at least three} low-degree generators because the general formulas for these three generators were obtained from computations and verified to give independent sections for all $d = 2n\geq 16$.
    We do not rule out the existence of other minimal generators that may appear in sufficiently high degree. 
    
    If $n=7$ $(d=14)$, the two sections in degree $(n+2,1) = (9,1)$ are minimal homogeneous generators for $M_V$, whereas the section of degree $(n+2,2) = (9,2)$ is no longer a minimal generator.
    The parallel locus of the two minimal generators is
    \[
        (x_1)(x_2)(x_1a_0-x_0a_1)(x_2a_1+13x_1a_2) = 0.
    \]
    The only difference between the parallel loci in the cases $n=7$ and $n \geq 8$ is the presence of the component $x_1 = 0$ in the $n=7$ case. 
    However, we will see that this line is a leaf of the foliation on $\PP^2$ associated to the component $x_1a_0-x_0a_1 = 0$ and hence the description of the exceptional locus is the same for $n=7$ as for $n \geq 8$. 
    
    We restrict to each component separately to describe in detail how to find the corresponding leaves in $\PP^2\setminus C$.
    \begin{itemize}
        \item Let $x_2 = 0$. 
        This line intersects the curve $C$ at $2n$ distinct points and so it is hyperbolic in $\PP^2\setminus C$.
        \item Let $x_1a_0 - x_0a_1 = 0$. 
        In this case, we choose $q = \dfrac{x_1}{x_0}$ so that
        \[
            \nabla q\cdot \mathbf{a} = -\frac{1}{x_0^2}(x_1a_0 - x_0a_1) = 0.
        \]
        Hence, the leaves of the foliation on $\PP^2$ associated to this component are defined by the meromorphic 1-form $d(x_1/x_0)$ and thus form the pencil of lines $sx_0-tx_1 = 0$, $[s:t]\in\PP^1$.
        If $t = 0$ we get the line $x_0 = 0$ which intersects $C$ at the $2n$ points given by $x_1^{2n}+x_1x_2^{2n-1} = 0$.
        If $t\neq 0$ then the lines in the pencil are $x_1 = \lambda x_0$ with slope $\lambda = s/t$.
        Intersecting with $C$ we get the equation
        \[
            (\lambda^{2n}+ \lambda^n + 1)x_0^{2n} + \lambda x_0x_2^{2n-1} = 0,
        \]
        which has $2n$ distinct solutions whenever $\lambda\neq 0$ and $\lambda^{2n}+ \lambda^n + 1 \neq 0$.
        The line with $\lambda = 0$, i.e., $x_1 = 0$, intersects $C$ only at the base point $[0:0:1]$, and the lines with $\lambda^{2n} + \lambda^n + 1 = 0$ intersect $C$ only at the points $[1:\lambda:0]$ and $[0:0:1]$.
        Therefore, these $2n$ lines are part of the exceptional locus of $\PP^2\setminus C$.
        \item Let $x_2a_1+(2n-1)x_1a_2 = 0$. 
        We choose $q = x_1x_2^{2n-1}$ so that 
        \[
            \nabla q \cdot\mathbf{a} = x_2^{2n-2}(x_2a_1+(2n-1)x_1a_2) = 0.
        \]
        Then the leaves of the foliation on $\PP^2$ associated to this component are the pencil of curves
        \[
            s(x_0^{2n} + x_0^nx_1^n + x_1^{2n}) + tx_1x_2^{2n-1} = 0, \qquad [s:t]\in\PP^1.
        \]
        Note that the only two reducible members of the pencil are the ones with $[s:t]$ equal to $[1:0]$ and $[0:1]$; the rest are smooth and hence hyperbolic by having degree at least $14$.

        The curve $x_0^{2n} + x_0^nx_1^n + x_1^{2n} = 0$ is the union of the $2n$ lines $x_1 = \lambda x_0$, where $\lambda$ satisfies $\lambda^{2n} + \lambda^n + 1 = 0$; these are the same lines we found in the previous component.
        Finally, the curve $x_1x_2^{2n-1}$ is the union of the two lines $x_1 = 0$ and $x_2 = 0$, which we already analyzed.
    \end{itemize}

    If the module $M_V$ has additional minimal generators of low degree for $n$ beyond the range of our computations, the displayed generators would define a priori a larger parallel locus and consequently a larger exceptional set.
    However, the listed lines carry entire curves, so any additional generators cannot remove those lines.
    Therefore, these lines form the exceptional locus regardless of $M_V$ getting a new minimal generator for some $n\geq 7$.
    
    \noindent\textbf{Exceptional locus:} For $C = \{x_0^{2n} + x_0^nx_1^n + x_1^{2n} + x_1x_2^{2n-1} = 0\}$ and $d = 2n\geq 14$, the exceptional locus of $\PP^2\setminus C$ is the union of the $2n+1$ lines $\lambda x_0-x_1 = 0$ where $\lambda = 0$ or $\lambda^{2n}+ \lambda^n + 1 = 0$.
\end{example}

\begin{example}\label{example:case 3(2)}
    Let 
    \[
        f = x_0^{2n+1}  + x_0^{n}x_1^{n+1} + x_1^{2n+1} + x_1x_2^{2n} \qquad n \geq 7.
    \]
    If $n = 7$ $(d = 15)$ the module $M_V$ has two minimal homogeneous generators of total degree at most $d-4 = 11$: one in degree $(9,1)$ and the other one in degree $(10,1)$.
    Their parallel locus is the vanishing
    \[
        (x_1)(x_2)(x_1a_0-x_0a_1)(x_2a_1+14x_1a_2) = 0.
    \]
    For $n\geq 8$ ($d=2n+1\geq 17$) there are at least three minimal generators of total degree at most $d-4$, and these have degrees $(n+2,1)$, $(n+3,1)$, and $(n+3,2)$, respectively.
    Their parallel locus is cut out by
    \[
        (x_2)(x_1a_0-x_0a_1)(x_2a_1+2nx_1a_2) = 0.
    \]
    The calculation of the exceptional locus is analogous to the one done in \Cref{example:case 3(1)}.
    
    \noindent\textbf{Exceptional locus:} For $C = \{x_0^{2n+1}  + x_0^{n}x_1^{n+1} + x_1^{2n+1} + x_1x_2^{2n} = 0\}$ of degree $d = 2n+1$, the exceptional locus of $\PP^2\setminus C$ is the union of the $2n+2$ lines $\lambda x_0 - x_1 = 0$ where $\lambda = 0$ or $\lambda^{2n+1}+\lambda^{n+1} + 1 = 0$.
\end{example}

\begin{example}\label{example:case 3(3)}
    Let 
    \[
        f = x_0^{2n} + x_0^nx_1^n + x_1^{2n} + x_1x_2^{2n-1} + x_2^{2n}, \qquad n\geq 7.
    \]
    The module $M_V$ associated to these curves has at least two minimal homogeneous generators with total degree at most $d-4$. 
    Both generators occur in degree $(n+2,1)$ and their parallel locus is the vanishing 
    \[
        (x_2)\,((2n-1)x_1+2nx_2)\,(x_1a_0-x_0a_1)\,(x_2a_1+((2n-1)x_1+2nx_2)a_2) = 0.
    \]
    The lines $x_2 = 0$ and $(2n-1)x_1+2nx_2 = 0$ intersect $C$ at $2n$ distinct points and are thus hyperbolic in $\PP^2\setminus C$.
    As computed in \Cref{example:case 3(1)}, the leaves associated to the component $x_1a_0-x_0a_1 = 0$ form the pencil of lines $\lambda x_0 - x_1 = 0$.
    The lines with $\lambda^{2n} + \lambda^n + 1 = 0$ intersect $C$ at the points $[1:\lambda:0]$ (multiplicity $2n-1$) and $[1:\lambda:-\lambda]$ (multiplicity 1); all other lines, including $x_0 = 0$ ($\lambda = \infty$), intersect $C$ in at least $2n-1$ distinct points.

    The curves of the pencil $sf + t(x_1x_2^{2n-1} + x_2^{2n}) = 0$ form the leaves associated to the component $x_2a_1 + ((2n-1)x_1 + 2nx_2)a_2 = 0$.
    The base locus of this pencil consists of $4n$ distinct points. 
    The only two reducible members of the pencil are the ones corresponding to $[s:t] = [0:1]$ and $[1:-1]$, defined by the polynomials 
    \[
        x_2^{2n-1}(x_1 + x_2)
        \qquad \text{and} \qquad 
        x_0^{2n} + x_0^nx_1^n + x_1^{2n},
    \]
    respectively.
    All other members of the pencil are irreducible and hence hyperbolic after removing $4n$ points, i.e., in $\PP^2\setminus C$.
    The reduced zero locus $x_2^{2n-1}(x_1 + x_2) = 0$ consists of the lines $x_2 = 0$ and $x_1 + x_2 = 0$, which intersect $C$ at $2n$ distinct points, making them hyperbolic in $\PP^2\setminus C$.
    On the other hand, the zero locus $x_0^{2n} + x_0^nx_1^n + x_1^{2n} = 0$ consists of the $2n$ lines $\lambda x_0 -x_1 = 0$ with $\lambda^{2n} + \lambda^n + 1 = 0$, which are the exceptional lines found above.

    \noindent\textbf{Exceptional locus:} For $C = \{x_0^{2n} + x_0^nx_1^n + x_1^{2n} + x_1x_2^{2n-1} + x_2^{2n} = 0\}$ of degree $d=2n\geq 14$, the exceptional locus of $\PP^2\setminus C$ consists of the $2n$ lines $\lambda x_0-x_1 = 0$ with $\lambda^{2n}+\lambda^n + 1 = 0$.
\end{example}


\begin{example}\label{example:case 3(5)}
    Let
    \[
        f=x_0^{2n}+x_1^{2n}+x_2^{2n}+cx_0^nx_1^n,
        \qquad n\geq 6,\quad c\neq \pm2.
    \]
    The condition $c\neq\pm2$ guarantees that $C$ is smooth.
    The low-degree sections of $M_V$ depend slightly on the parameter $c$.
    If
    \[
        c\notin\left\{0, \pm2, \pm\frac{2(2n-1)}{n-1}\right\},
    \]
    then $M_V$ has at least two low-degree sections, both of degree $(n+1,1)$.
    For the special values $c=\pm\frac{2(2n-1)}{n-1}$, and $n\geq 7$, there are instead two low-degree sections, of degrees $(n,1)$ and $(n+2,1)$.
    Despite this change in degrees, their parallel locus is the same in both cases:
    \[
        (a_2)(x_2)(x_1a_0-x_0a_1)=0.
    \]
    Thus, for $n\geq7$ and $c\neq0,\pm2$, the calculation of the exceptional locus is the same.
    When $c=0$ (the Fermat curve), the two low-degree sections are no longer minimal generators and moreover vanish along a common component inside $X_2$.
    There is also one exceptional case: when $n=6$ and $c=\pm\frac{2(2n-1)}{n-1}=\pm 22/5$ there is only one low-degree section, of degree $(6,1)$, and we exclude these two values from the discussion below.
    
    Assume hereafter that $c\neq0,\pm2$, and, when $n=6$, that $c\neq\pm22/5$.
    The parallel locus of the relevant low-degree sections is
    \[
        (a_2)(x_2)(x_1a_0-x_0a_1)=0.
    \]
    First, let $a_2 = 0$.
    We choose $q = x_2^{2n}$.
    Then $\nabla q \cdot \mathbf{a} = 2nx_2^{2n-1}a_2 = 0$.
    Hence, the leaves of the foliation on $\PP^2$ associated to this component form the pencil
    \[
        s(x_0^{2n} + x_1^{2n} + c x_0^nx_1^n) + tx_2^{2n} = 0, \qquad [s:t]\in\PP^1.
    \]
    The curves $x_0^{2n} + x_1^{2n} + c x_0^nx_1^n = 0$ and $x_2^{2n} = 0$ are the only reducible  members of the pencil, and all other members are smooth and hence hyperbolic.
    The curve $x_0^{2n} + x_1^{2n} + c x_0^nx_1^n = 0$ consists of the $2n$ lines $x_1 = \lambda x_0$ where $\lambda$ satisfies $\lambda^{2n} + c\lambda^n + 1 = 0$.
    These lines intersect $C$ only at the point $[1:\lambda:0]$ and hence form part of the exceptional locus of $\PP^2\setminus C$.
    The other reducible member corresponds to the line $x_2 = 0$, which intersects $C$ at $2n$ distinct points and is therefore hyperbolic in $\PP^2\setminus C$.
    
    As we have seen for the component defined by $x_1a_0-x_0a_1 = 0$, the leaves on $\PP^2$ are the pencil of lines $x_1 = \lambda x_0$.
    The only lines in this pencil intersecting $C$ in at most 2 points are the same ones found above. 
    
    \noindent\textbf{Exceptional locus:} For $C = \{x_0^{2n} + x_1^{2n} + x_2^{2n} + c x_0^nx_1^n = 0\}$ with either $n\geq7$ and $c\neq0,\pm2$, or $n=6$ and $c\notin\{0,\pm2,\pm\frac{22}{5}\}$, the exceptional locus of $\PP^2\setminus C$ consists of the $2n$ lines $\lambda x_0-x_1 = 0$ with $\lambda^{2n}+c\lambda^n + 1 = 0$.    
\end{example}

\begin{example}
    Let 
    \[ 
        f = x_0^{14}+2x_0^7x_1^7+x_1^{14}-x_1^{13}x_2+x_2^{14}.
    \]
    The module $M_V$ has three minimal homogeneous generators of total degree at most $d-4 = 10$.
    These three sections have degree $(9,1)$ and their parallel locus is 
    \[
        (a_2)\,(x_2)\,\big(7x_2(14x_1 - 13x_2)(x_1a_0 - x_0a_1)^2 + 13x_0^2(x_1a_2 - x_2a_1)^2\big) = 0.
    \]
    Let $a_2 = 0$.
    We choose $q = x_2^{14}$.
    Then $\nabla q \cdot\mathbf{a} = 0$ and thus the leaves in $\PP^2$ form the pencil $sf + tq = 0$, which can be written as
    \[
        s((x_0^7 + x_1^7)^2 - x_1^{13}x_2) + tx_2^{14} = 0,\qquad [s:t]\in\PP^1.
    \]
    The base locus of this pencil is the set of 7 points $[\lambda:1:0]$, where $\lambda$ satisfies $\lambda^7 + 1 = 0$.
    Except for $x_2^{14} = 0$, every curve in the pencil is irreducible.
    As the pencil includes $C$, these leaves are all hyperbolic after removing the base points and thus hyperbolic in $\PP^2\setminus C$.
    Since the line $x_2 = 0$ also intersects $C$ at those 7 points, it is hyperbolic in $\PP^2\setminus C$.
    
    Now let us restrict to the component $7x_2(14x_1 - 13x_2)(x_1a_0 - x_0a_1)^2 + 13x_0^2(x_1a_2 - x_2a_1)^2 = 0$.
    This component has degree 2 in the $a$ variables, and therefore it defines a 2-web in $\PP^2$.
    As the polynomial defining the component is of the type described in Case 3 above, we find the leaves by integrating the symmetric differential form
    \[
        7x_2(14x_1 - 13x_2)(x_1dx_0 - x_0dx_1)^2 + 13x_0^2(x_1dx_2 - x_2dx_1)^2 = 0.
    \]
    Passing to affine coordinates $u = x_0/x_1$ and $v = x_2/x_1$, we obtain the separable equation
    \[
        7v(14 - 13v)(du)^2 + 13u^2(dv)^2 = 0 \implies \frac{13}{7} \frac{(dv)^2}{v(14 - 13v)} = -\frac{(du)^2}{u^2}.
    \]
    Taking square roots and integrating we obtain the general solution
    \[
        \frac{1}{\sqrt{7}} \arcsin\left(\frac{13v - 7}{7}\right) = \pm i \ln(u) + K.
    \] 
    Checking the line at infinity $x_1 = 0$ as well as the homogenization of the singular solutions $u = 0$, $v = 0$, and $14-13v = 0$, all these algebraic leaves intersect $C$ in at least 7 points, so they are hyperbolic in $\PP^2\setminus C$.
    
    In conclusion, $\PP^2\setminus C$ is hyperbolic for $C = \{x_0^{14}+2x_0^7x_1^7+x_1^{14}-x_1^{13}x_2+x_2^{14}=0\}$.
\end{example}

\subsection{Inconclusive Cases}
The following table collects examples of curves for which the module $M_V$ has only one minimal homogeneous generator of total degree at most $d-4$, but its degree does not satisfy the inequality in \eqref{ineq:module_too_negative_section}.
We give the range for the degree $d$ of the curve, where the lower bound is the first degree for which the section starts to appear.
The upper bound is the highest degree for which we confirmed the property about $M_V$ via computation. 
As a consequence, even in the degree range $d\geq 15$, we cannot conclude {\GGL} for the complement of these curves by means of \Cref{Module test} (equiv. \Cref{test theorem}).

\begin{example}\hfill
\begin{center}
    \renewcommand{\arraystretch}{1.4} 
    \begin{tabular}{@{} l c c @{}}
        \toprule
        Polynomial defining $C\subset\PP^2$ & Degree of generator & Range computed \\
        \midrule
        $x_0^d + x_1^d + x_2^d$ & $(2,1)$ &  $7 \leq d \leq 20$ \\
        $x_0^d + x_2^d - x_1^{d-1}x_2$ & $(3,1)$ & $8 \leq d \leq 20$ \\
        $x_0^d + x_1^d + x_2^d + x_0^{d-1}x_1$ & $(3,1)$ & $8 \leq d \leq 20$ \\
        $x_0^d + x_1(x_0^{d-1} + x_1^{d-1} + x_2^{d-1})$ & $(4,1)$ & $9 \leq d \leq 20$ \\
        $x_0^d + x_1^d + x_2^{d-2}(x_0^2 + x_1^2 + x_2^2)$ & $(4,1)$ & $9 \leq d \leq 20$ \\
        $x_0^{d-1}x_1 + x_1^{d-1}x_2 + x_2^{d-1}x_0$ & $(5,1)$ & $10 \leq d \leq 20$ \\
        $x_0^d + x_1^d + x_2^{d-2}(x_0^2 + x_0x_1 + x_0x_2 + x_1^2 + x_1x_2 + x_2^2)$ & $(4,1)$ & $d = 15, 16$\\
        \bottomrule
    \end{tabular}
\end{center}
\end{example}


\input{appendix}

\newpage
\printbibliography

\end{document}

%% file: appendix.tex
\section{Appendix: Formulas for the Generators of \Cref{example:case 3(1)}}\label{sec:Appendix}
\noindent\textbf{Generator 1} of bidegree $(n+2,1)$.
\begin{align*}
G_{11} & = (2n-1)x_1^2x_2\big[(7n-3)x_0^n + 2nx_1^n\big] \\[5pt]
G_{21} & = 2n(2n-1)^2x_0 x_1^n x_2 a_1^2 \\[5pt]
G_{31} & = -(2n-1)x_1x_2 \begin{aligned}[t] 
    \big[&n(n-1) x_0^{n-2} x_1^2  a_0^2\\
    & - (16n^2 - 15n + 3) x_0^{n-1} x_1  a_0a_1\\
    & - ((15n^2 - 14n + 3)x_0^n + 2nx_1^n )  a_1^2 \big]
    \end{aligned}\\[5pt]
G_{41} &= \begin{aligned}[t]  &n(n - 1)x_0^{n - 2}x_1^2x_2^2 a_0^2 \\
&- 2n(n - 1)x_0^{n - 1}x_1x_2^2 a_0a_1 \\
&+ x_2^2\big((15n^2 - 14n + 3)x_0^n + 2n(2n - 1)x_1^n\big) a_1^2 \\
&+ (2n - 1)^2(7n - 3)x_0^{n - 1}x_1^2x_2 a_0a_2 \\
&- 2(2n-1)x_1x_2\big((7n - 3)x_0^n - 4n(n - 1)x_1^n\big)  a_1 a_2 \\
&+ (n-1)(2n-1)x_1^2 \big(-2(7n - 3)x_0^n - 4nx_1^n\big)  a_2^2
\end{aligned}
\end{align*}

\noindent\textbf{Generator 2} of bidegree $(n+2,1)$.
\begin{align*}
G_{12} &= x_0x_1x_2\big[2n x_0^n + (7n-3) x_1^n\big] \\[5pt]
G_{22} &= -n(n-1) x_0^2 x_1^{n-1} x_2 a_1^2 \\[5pt]
G_{32} &= x_1x_2 \big[\begin{aligned}[t]
    &2n(2n-1) x_0^{n-1} x_1 a_0^2 \\
    &-(2n(2n-1) x_0^n - (5n-3)(3n-1) x_1^n) a_0 a_1 \\
    &-(16n^2 - 15n + 3) x_0 x_1^{n-1}a_1^2 \big]
\end{aligned}\\[5pt]
G_{42} &= - 2n x_0^{n-1} x_1 x_2^2 a_0^2 \\
&+ 4n x_0^n x_2^2 a_0 a_1 \\
&+ (7n-3) x_0 x_1^{n-1} x_2^2 a_1^2 \\
&+ x_2x_1\big[2n(2n-1) x_0^n + (5n-3)(3n-1) x_1^n\big] a_0 a_2 \\
&- 2x_0x_2\big[2n x_0^n + (n^2 + 6n - 3) x_1^n\big] a_1 a_2 \\
&- 2(n-1)x_0x_1\big[2n x_0^n + (7n-3)x_1^n\big] a_2^2
\end{align*}

\noindent\textbf{Generator 3} of bidegree $(n+2,2)$.
{\allowdisplaybreaks
\begin{align*}
G_{13} &= x_0\big[ \begin{aligned}[t]
    & 4n(2n-1)x_1x_2\bigl((7n - 3)x_0^n + 2nx_1^n \bigr)a_0 \\
    & - (3n-1)(5n-3)x_2^2\bigl(nx_0^n + (7n - 3)x_1^n\bigr)a_1  \\
    & + 6(2n - 1)(3n - 1)(5n - 3)x_1^{n+1}x_2a_2\bigr]
\end{aligned} \\[5pt]
G_{23} &= n(3n-1)(5n-3)x_0^2 x_1^{n-2}x_2\bigl[(n - 1)x_2a_1 - 2(2n - 1)x_1a_2 \bigr] a_1^2\\[5pt]
G_{33} &= \begin{aligned}[t]
    & - 4n^2(n-1)(2n - 1)x_0^{n-2}x_1^2x_2^2 a_0^3 \\
    & + 2n(2n - 1)(17n^2 - 16n + 3)x_0^{n-1}x_1x_2^2 a_0^2a_1 \\
    & - x_2^2\bigl[2n(2n - 1)(3n - 1)(5n - 3)x_0^n + (193n^4 - 388n^3 + 278n^2 - 84n + 9)x_1^n \bigr] a_0a_1^2 \\
    & + (208n^4 - 417n^3 + 295n^2 - 87n + 9)x_0x_1^{n-1}x_2^2 a_1^3 \\
    & + 4n(2n - 1)(3n - 1)(5n - 3)x_0^{n-1}x_1^2x_2 a_0^2a_2 \\
    & - 2(2n-1)(3n-1)(5n-3)x_1x_2 \bigl[ 4nx_0^n + (7n - 3)x_1^n\bigr] a_0a_1a_2 \\
    & + 2(2n-1)(3n-1)(5n-3)x_0x_2 \bigl[ 2nx_0^n + (3 - 8n)x_1^n\bigr] a_1^2a_2
\end{aligned}\\[5pt]
G_{43} &= \begin{aligned}[t]
&4n^2(n - 1)x_0^{n-2}x_1x_2^3a_0^3\\
&+2n(11n^2 - 10n + 3)x_0^{n-1}x_2^3a_0^2a_1\\
&+8n^2(2n - 1)x_1^{n-1}x_2^3a_0a_1^2\\
&-(3n - 1)(5n - 3)(7n - 3)x_0x_1^{n-2}x_2^3a_1^3\\
&+4n(n - 1)(28n^2 - 27n + 6)x_0^{n-1}x_1x_2^2a_0^2a_2\\
&-x_2^2\bigl[2n(30n^3 - 47n^2 + 24n - 3)x_0^n + (161n^4 - 324n^3 + 254n^2 - 84n + 9)x_1^n\bigr]a_0a_1a_2\\
&-2(n^4 - 182n^3 + 259n^2 - 120n + 18)x_0x_1^{n-1}x_2^2a_1^2a_2\\
&-2(2n-1)x_1x_2\bigl[4n(n - 1)(7n - 3)x_0^n - (97n^3 - 135n^2 + 63n - 9)x_1^n\bigr]a_0a_2^2 \\
&+2(3n-1)(5n-3)x_0x_2\bigl[2n(n - 1)x_0^n + (3n^2 - 20n + 9)x_1^n\bigr]a_1a_2^2 \\
&-12(n - 1)(2n - 1)(3n - 1)(5n - 3)x_0x_1^{n+1}a_2^3
\end{aligned}
\end{align*}
}